\documentclass[11pt,reqno]{amsart}
\usepackage{graphicx,enumerate,amssymb,bbold,amsmath}
\usepackage[notrig]{physics}
\usepackage{nicefrac}
\usepackage{mathtools}
\usepackage[font=footnotesize,labelfont=small]{subcaption}
\usepackage[ruled,vlined,linesnumbered]{algorithm2e}

\SetKwInOut{Parameter}{parameter}

\usepackage[left=2.5cm,right=2.5cm,top=2.5cm,bottom=2.5cm,includeheadfoot]{geometry} 

\usepackage[colorlinks=true, pdfstartview=FitV, linkcolor=blue, 
            citecolor=blue, urlcolor=blue]{hyperref}
 \usepackage{lipsum,cases}
\usepackage[htt]{hyphenat}

\numberwithin{equation}{section}
\numberwithin{figure}{section}
\setcounter{tocdepth}{3}



\theoremstyle{plain}
\newtheorem{theorem}{Theorem}[section]
\newtheorem{lemma}[theorem]{Lemma}
\newtheorem{proposition}[theorem]{Proposition}

\theoremstyle{definition}
\newtheorem{definition}[theorem]{Definition}

\newtheorem{remark}[theorem]{Remark}



\newcommand{\bitem}{\begin{itemize}}
\newcommand{\eitem}{\end{itemize}}
\newcommand{\mc}[1]{\mathcal{#1}}

\newcommand{\N}{\mathbb{N}}
\newcommand{\R}{\mathbb{R}}

\newcommand{\bpm}{\begin{pmatrix}}
\newcommand{\epm}{\end{pmatrix}}
\newcommand{\bvm}{\begin{vmatrix}}
\newcommand{\evm}{\end{vmatrix}}
\newcommand{\bsm}{\left(\begin{smallmatrix}}
\newcommand{\esm}{\end{smallmatrix}\right)}
\newcommand{\T}{\top}

\newcommand{\ol}[1]{\overline{#1}}
\newcommand{\la}{\langle}
\newcommand{\ra}{\rangle}

\newcommand{\mrm}[1]{\mathrm{#1}}

\newcommand{\veps}{\varepsilon}

\newcommand{\gdw}{\Leftrightarrow}

\DeclareMathSymbol{\mydiv}{\mathbin}{symbols}{"04}

\DeclareMathOperator{\dom}{dom}

\DeclareMathOperator{\argmin}{arg min}


\DeclareMathOperator{\dgp}{dgp}

\title[Superiorization vs. Accelerated Convex Optimization]{Superiorization vs.~Accelerated Convex Optimization: The Superiorized / Regularized Least-Squares Case}
\author[Y.~Censor, S.~Petra, C.~Schn\"{o}rr]{Yair Censor, Stefania Petra, Christoph Schn\"{o}rr}
\address[Y.~Censor]{Dept.~Mathematics, University of Haifa,  Israel} 
\email{yair@math.haifa.ac.il}
\urladdr{\url{http://math.haifa.ac.il/yair}}
\address[S.~Petra]{Mathematical Imaging Group, Heidelberg University, Germany} 
\email{petra.uni-heidelberg.de}
\urladdr{\url{https://www.stpetra.com}}
\address[C.~Schn\"{o}rr]{Image and Pattern Analysis Group, Heidelberg University, Germany} 
\email{schnoerr@math.uni-heidelberg.de}
\urladdr{\url{https://ipa.math.uni-heidelberg.de}}
\date{Submitted: November 5, 2019. Revised: March 31, 2020.} 


\keywords{superiorization, perturbation resilience, convex optimization, accelerated forward-backward iteration, inexact proximal mappings.}


\begin{document}

\begin{abstract}
We conduct a study and comparison of superiorization and optimization
approaches for the reconstruction problem of superiorized / regularized
least-squares solutions of underdetermined linear equations with nonnegativity
variable bounds. Regarding superiorization, the state of the art is
examined for this problem class, and a novel approach is proposed
that employs proximal mappings and is structurally similar to the
established forward-backward optimization approach. Regarding convex
optimization, accelerated forward-backward splitting with inexact
proximal maps is worked out and applied to both the natural splitting
least-squares term / regularizer and to the reverse splitting regularizer
/ least-squares term. Our numerical findings suggest that superiorization
can approach the solution of the optimization problem and leads to
comparable results at significantly lower costs, after appropriate
parameter tuning. On the other hand, applying accelerated forward-backward
optimization to the reverse splitting slightly outperforms superiorization,
which suggests that convex optimization can approach superiorization
too, using a suitable problem splitting.
\end{abstract}

\maketitle
\tableofcontents

\section{Introduction}\label{sec:Introduction}
\subsection{Overview and Motivation}
The purpose of this work is to present a comparative study of superiorization
and accelerated inexact convex optimization.

The motto of the \textit{superiorization methodology (SM)} is to take
an iterative algorithm -- called the \textit{basic algorithm} --
which is known to converge to a point in a specified set and perturb
its iterates such that the perturbed algorithm -- called \textit{the
superiorized version of the basic algorithm} -- will still converge
to some point of the same set. Hence, these perturbations can be used
to lower the value of a given exogenous \textit{target function} without
compromising convergence of the generated sequence of iterates to
the specified set, which explains the adjective \textit{superiorized}.
In comparison to numerical iterative optimization methods, the SM
strives to return a \textit{superior} feasible point rather than an
\textit{optimal} feasible point. On the other hand, the SM only requires
minor modifications of existing code in order to perturb a basic algorithm.
In addition, the SM does not depend on what target function is chosen
for the application domain at hand. As a consequence, superiorization
provides a flexible framework for applications that has attracted
interest recently (see Subsection \ref{sec:SM-related}).

\textit{Convex optimization (CO)} is an established mature field of
research. For large-scale problems that we are mainly interested in,
iterative proximal minimization are the methods of choice based on
various problems splittings \cite[Chapter 28]{Bauschke:2010aa}, \cite{Combettes-prox-splitt}.
The most fundamental problem splitting is based on the decomposition
\begin{equation}
h\colon\R^{n}\to(-\infty,+\infty],\qquad h(x)=f(x)+g(x)\label{eq:f-f0-g}
\end{equation}
of a given convex objective function $h$ into a \textit{smooth} convex
function $f$ and a, possibly \textit{nonsmooth,} convex function
$g$, respectively, where smooth means continuously differentiable
with $L_{f}$-Lipschitz continuous gradient $\nabla f$, whereas $g$
is only required to be lower-semicontinuous. \textit{Forward-backward
splitting (FBS)} -- also called \textit{proximal gradient iteration}
-- iteratively computes for $k\in\{0,1,\dotsc\}$ with arbitrary
initial point $x_{0}\in\R^{n}$
\begin{subequations}\label{eq:intro-FBS}
\begin{align}
x_{k+1} &= P_{\alpha} g\big(x_{k}-\alpha\nabla f(x_{k})\big),
\label{eq:intro-FBS-a} \\ \label{eq:intro-FBS-b}
&= x_{k}-\alpha\psi_{\alpha}(x_{k}),
\intertext{with $\psi_{\alpha}$ given by} \label{eq:intro-FBS-c}
\psi_{\alpha}(x) 
&= \nabla f(x) + \nabla e_{\alpha}g(x-\alpha\nabla f(x)\big),\qquad \alpha \in \Big(0,\frac{2}{L_{f}}\Big),
\end{align}
\end{subequations}
where $P_{\alpha}g(\cdot)$ and $e_{\alpha}g$
denote the proximal mapping and the Moreau envelope with respect to
$g$, respectively, as defined below by \eqref{eq:def-P-lambda} and
\eqref{eq:def-Moreau-envelope} \cite[Definition 1.22]{Rockafellar:2009aa}.
The sequence $(x_{k})_{k\geq0}$ generated by \eqref{eq:intro-FBS-a}
is known to converge to an optimal point $x^{\ast}$ minimizing the
objective function $h$, provided the step-size is chosen in the interval
$\alpha\in(0,\frac{2}{L_{f}})$ \cite[Corollary 28.9]{Bauschke:2010aa}.

Algorithm \eqref{eq:intro-FBS} bears a structural resemblance to
the SM. The evaluation of the proximal map $P_{\alpha}g$ may be considered
as a `basic algorithm'\footnote{Throughout this paper, we indicate by `$\dots$' when the 
notion `basic algorithm' is used to highlight the similarity of the SM and CO. Otherwise, 
\textit{basic algorithm} without `$\dots$' refers to the technical term of the SM as 
defined in Section \ref{sec:SUPvsCO}.} 
that is perturbed in \eqref{eq:intro-FBS-a}
by the gradient $\nabla f$ steps without losing convergence. For
example, if $g=\delta_{C}(x)$ is the indicator function (see Subsection
\ref{sec:Notation} for the basic notation) of a closed
convex feasibility set $C\subset\R^{n}$, then $P_{\alpha}g=\Pi_{C}$
is just the orthogonal projection onto $C$, i.e.,~evaluating the
proximal map returns a feasible point. Regarding $f$ as the target
function then enables to interpret \eqref{eq:intro-FBS-a} as a \textit{perturbed}
version of the basic algorithm that returns a feasible point that,
not only is superior, but is rather optimal. CO provides theory that
explains how this desirable asymptotic behavior of an iterative algorithm
is achieved. In the case of the FBS \eqref{eq:intro-FBS}, the perturbed
proximal iteration \eqref{eq:intro-FBS-a} can be interpreted as stepping
along \textit{descent directions} provided the \textit{generalized
gradient mapping} $\psi_{\alpha}$ (cf.~\eqref{eq:intro-FBS-b}),
where the gradient is taken with respect to the composite objective
function $h$ \eqref{eq:f-f0-g}, and `generalized' indicates the
evaluation of the gradient $\nabla e_{\alpha}g$ of the Moreau envelope
of the nonsmooth function $g$ (cf.~\eqref{eq:intro-FBS-c}).

This structural similarity between the SM and CO that is further discussed
in Subsection \ref{sec:SM-CO-Preliminary}, motivated us to conduct a
comparative study based on a particular but representative problem
instance of the form \eqref{eq:f-f0-g}.

\subsection{Contribution and Objectives}\label{sec:Contribution}
We consider the problem
\begin{equation}\label{eq:f-LS-R}
\min_{x \in \R^{n}} h(x),\qquad
h(x) = \frac{1}{2}\|Ax-b\|^{2} + \lambda R(x)
\end{equation}
in order to conduct a comparative study of the SM and CO. Problem
\eqref{eq:f-LS-R} is representative in that it comprises a data fitting
term and a regularizing term, similar to countless variational formulations
of inverse problems for data restoration and recovery, etc. The matrix
$A\in\R^{m\times n}$ is assumed to have dimensions $m<n$ such that
an affine subspace of points $x\in\R^{n}$ exists that minimize the
first least-squares term on the right-hand side of \eqref{eq:f-LS-R}.
We refer to Subsection \ref{sec:Problem-Formulation} for further details
and assumptions regarding \eqref{eq:f-LS-R}.

For the data and functions of \eqref{eq:f-LS-R}, a superiorization
approach will not try to solve the unconstrained penalized objective
function $h$. Instead, it will amount to choosing a basic algorithm
for solving the well-known least-squares problem, e.g., \cite{Boerck1996},
and perturbing it by subgradients of $R$, which is regarded as a
target function. By contrast, regarding CO through FBS, one naturally
identifies in \eqref{eq:f-f0-g} the smooth component $f=\frac{1}{2}\|A\cdot-b\|^{2}$
and the nonsmooth component $g=R$, in order to apply iteration \eqref{eq:intro-FBS}.
Thus, from the perspective of the SM, the roles of the functions $f$
and $g$ are \textit{interchanged}: the proximal map $P_{\alpha}g$
defines the basic algorithm of the CO approach, which is perturbed by the gradient $\nabla f$ of the least-squares term $f$.

Due to the opposing roles of the two terms of \eqref{eq:f-LS-R} in
the superiorization and optimization approaches, respectively, we
also consider the alternative case in which $f:=R$ and $g:=\frac{1}{2}\|A\cdot-b\|^{2}$,
in order to make CO closer to the SM and our study more comprehensive.
However, since FBS requires $f$ to be continuously differentiable,
this requires to approximate $R$ by a $C^{1}$-function $R_{\tau}$,
which can be done in a standard way \cite[Sect.~2.8]{Auslender:2003aa},
\cite{Combettes:2018aa}, whenever a regularizing function $R$ is
used that takes the form of a support function in the sense of convex
analysis \cite[Section 8.E]{Rockafellar:2009aa} (see Section \ref{sec:Problem-Formulation}
for details). We point out that although the function $R_{\tau}$
is smooth in the mathematical sense, it is still a highly nonlinear
function from the viewpoint of numerical optimization.

Thus, we actually study the following two variants of \eqref{eq:f-LS-R},
\begin{subequations}\label{eq:hu-hc}
\begin{align}
\min_{x \in \R^{n}} h_{u}(x),\qquad
h_{u}(x) &= \frac{1}{2}\|Ax-b\|^{2} + \lambda R_{\tau}(x),
\label{eq:fu} \\ \label{eq:fc}
\min_{x \in \R^{n}} h_{c}(x),\qquad
h_{c}(x) &= \frac{1}{2}\|Ax-b\|^{2} + \lambda R_{\tau}(x) + \delta_{\R_{+}^{n}}(x),
\end{align}
\end{subequations}
where \eqref{eq:fc} additionally takes into account nonnegativity constraints $x \geq 0$ 
that are often important in applications and turn the objective function $h_{c}$ (`c' for: constrained) 
into a truly nonsmooth version of the objective function $h_{u}$ (`u' for: unconstrained).

\vspace{0.2cm}
\noindent
The \textbf{goals and contributions} of this paper are the following.
\begin{enumerate}
\item \textit{Superiorization methodology (SM)}.
\begin{enumerate}
\item Examine the state of the art regarding \textit{basic algorithms} that are adequate for applying 
the SM to \eqref{eq:hu-hc}. This concerns, in particular, the \textit{resilience} of possible basic 
algorithms with respect to perturbations that are supposed to lower the value of the target function. 
See Subsection \ref{sec:Intro-SM} and Subsection \ref{sec:pert-resilience}  for specific definitions of 
\textit{perturbation resilience} and Table \ref{tab:list-of-algorithms} on page \pageref{tab:list-of-algorithms} for a list of basic algorithms that are examined in this paper.
\item Study various strategies for generating suitable perturbations in view of the given target function (see again Table \ref{tab:list-of-algorithms}). Here we also contribute a novel strategy utilizing the proximal mapping, which further underlines the structural similarity between the SM and CO.
\end{enumerate}
\item \textit{Convex optimization (CO).} \\
Apply state-of-the-art forward-backward splitting to \eqref{eq:hu-hc}, and take into account that the summands $f, g$ of the general form \eqref{eq:f-f0-g} can be identified in two alternative ways with the terms of the specific objective functions \eqref{eq:hu-hc}, as discussed above. By ``state of the art'' we mean 
\begin{enumerate}[(a)]
\item
the application of \textit{accelerated} variants of FBS that achieve the currently best known convergences rates $h(x_{k})-h(x^{\ast}) = \mc{O}(\frac{1}{k^{2}})$ for any convex objective function $h$ in a class encompassing both objective functions $h_{u}, h_{c}$ of \eqref{eq:hu-hc};
\item
to work out the \textit{inexact} evaluation of the proximal map of the FBS-step \eqref{eq:intro-FBS-a}. This amounts to adopt an \textit{inner} iterative loop for solving the convex optimization problem \eqref{eq:def-P-lambda} that defines $P_{\alpha} g$, along with a termination criterion that allows for stopping early this inner iteration \textit{without} compromising convergence of the outer FBS iteration.
\end{enumerate}
Clearly, both measures (a) and (b) aim at maximizing computational efficiency.
\item
\textit{SM vs.~CO}. \\
The studies of (1) and (2) above enable us to compare SM and CO and to discuss their common and different aspects. The variety of aspects seems comprehensive enough to draw conclusions that should more generally hold for other convex problems of the form \eqref{eq:f-f0-g}. We also indicate further promising directions of research in view of the gap remaining between the SM and CO.
\end{enumerate}

\subsection{Related Work}
\subsubsection{Superiorization}\label{sec:SM-related}
The \textit{superiorization method (SM)} was born when the terms and notions ``superiorization''\ and
``perturbation resilience'',\ in the present context, first appeared
in the 2009 paper \cite{dhc-itor-09} which followed its 2007 forerunner
\cite{butnariu}. The ideas have some of their roots in the 2006 and
2008 papers \cite{brz06,brz08}. All these culminated in Ran Davidi's
2010 PhD dissertation \cite{davidi-phd} and the many papers since
then cited in \cite{Censor-bib}.

Introductory and advanced materials about the SM, accompanied by relevant
references, can be found in the recent papers \cite{Censor-Levi:2019,CensorDerivativeFree2019arXiv},
in particular, \cite[Section 2]{CensorDerivativeFree2019arXiv} contains the basics
of the superiorization methodology in condensed form. A comprehensive
overview of the state-of-the-art and current research on superiorization
appears in our continuously updated  online bibliography that
currently contains 110 items \cite{Censor-bib}. Research works in
this bibliography include a variety of reports ranging from new applications
to new mathematical results on the foundations of superiorization.
A special issue entitled: ``Superiorization: Theory and Applications''
of the journal Inverse Problems \cite{Sup-Special-Issue-2017} contains
several interesting papers on the theory and practice of SM, such
as \cite{Cegielski-2017}, \cite{He2017}, \cite{Reich2017}, to name
but a few. Later papers continue research on perturbation resilience,
which lies at the heart of the SM, see, e.g., \cite{Bargetz2018}. 
Another recent work attempts at analysing the behavior of the SM via the concept of concentration of measure \cite{Censor-Levi:2019}.
Efforts to understand the very good performance of the SM in
practice recently motivated Byrne \cite{Byrne:2019aa} to notice and study similarities between some SM algorithms and
optimization methods.

There are two research directions in the general area of the SM.
``Weak superiorization'' assumes that the solution set is nonempty
and  uses ``bounded perturbation resilience''. In contrast, ``strong superiorization'' 
replaces asymptotic convergence to a solution point by
$\varepsilon$-compatibility   with a specified  set and uses the notion of 
``strong perturbation resilience''. The terms weak and strong superiorization, respectively, were proposed in
in \cite[Section 6]{cz3-2015} and \cite{constanta-weak-strong}.



The recent work \cite{Zibetti:2018aa,Helou:2018aa}
applied the SM to the unconstrained linear least-squares problem. Specifically, the preconditioned conjugate gradient iteration (PCG) was used as the basic algorithm for solving the least-squares problem in the algebraic (fully-discretized) model of computed tomography (CT), and superiorized using the discretized total variation
(TV)  as a target function. The authors report that superiorized PCG compares favorably to a state of the art optimization algorithm FISTA \cite{Beck:2009aa}, that minimizes \eqref{eq:f-LS-R} by accelerated FBS based on the choice $f=\frac{1}{2}\|A \cdot -b\|^{2}$ and $g=R$ in \eqref{eq:f-f0-g}. 
The main contribution of \cite{Zibetti:2018aa} that we exploit in this paper, was to show perturbation resilience of the conjugate gradient (CG) method and hence to show that CG can be superiorized by \emph{any} target function utilized as the criterion for superiorization.
\subsubsection{Convex Optimization}
Proximal iterations based on various operator splittings
\cite{Combettes-prox-splitt}, while tolerating inexactness in terms of summable error sequences \cite{Tossings:1994aa,Combettes:2004aa}, are well-known. More recent work has focused on inexactness criteria that can be checked computationally at each iteration. In particular, research has focused
on extending the acceleration technique introduced by Nesterov \cite{Nesterov:1983aa} to
such inexact proximal schemes. We refer to \cite{Reem:2017aa} and
references therein for a recent comprehensive study and survey, including
a novel accelerated inexact forward-backward scheme for minimizing
finite convex objective functions. This scheme does not apply to \eqref{eq:fc},
however, due to the nonnegativity constraints.

We point out that problems of the form \eqref{eq:hu-hc} 
have been extensively studied from the viewpoint of convex programming; see, e.g.,~\cite{Vogel:1998aa,Beck:2009ab,Goldstein:2009aa,Chen:2012ab}. For example, FISTA \cite{Beck:2009aa} is well-known to be particularly
efficient for convex problems with sparsity-enforcing $\ell_{1}$
or TV regularizers, like $R$ in \eqref{eq:f-LS-R}, because proximal maps with respect to these regularizers,
when iterated, can be carried out efficiently by shrinkage and TV-based
denoising, respectively \cite{Goldstein:2009aa,Beck:2009ab}. 

Selecting the most efficient optimization approach is \textit{not}
the main objective of the present paper, however. Rather, by reversing the role of $f$ and $g$ in \eqref{eq:f-f0-g}, we also consider FBS iterations that better 
mimic the structure of superiorization using $R_{\tau}$
as target function, even if this approach is \textit{not}
the most efficient one for optimizing \eqref{eq:hu-hc}. 

\subsection{Organization} 
Section \ref{sec:SUPvsCO} details our assumptions about problem \eqref{eq:f-LS-R} and the smoothing operation that turns $R$ into $R_{\tau}$ so as to enable forward-backward splitting in two alternative ways (Subsection \ref{sec:Problem-Formulation}). 
Subsections \ref{sec:Intro-SM} and \ref{sec:Intro-FBS} detail the superiorization approach and the convex optimization approach adopted in this paper, both from a top-level point of view, followed by a preliminary short discussion of common aspects and conceptual differences in Subsection \ref{sec:SM-CO-Preliminary}. Subsections \ref{sec:Intro-SM} and \ref{sec:Intro-FBS} provide templates for the concrete superiorization and optimization approaches worked out in Sections \ref{sec:Superiorization} and \ref{sec:Optimization}, respectively. Extensive numerical results are reported and discussed in Section \ref{sec:NumericalResults}. We conclude in Section \ref{sec:Conclusion}.

\subsection{Preliminary Notions and Notation}\label{sec:Notation}

We set $[n]=\{1,2,\dotsc,n\}$ for any $n\in\N$. The Euclidean space is denoted by $\R^{n}$
and its nonnegative orthant by $\R_{+}^{n}$. The strictly positive real numbers are denoted by $\R_{++}=\{x\mid x>0\}$.
$\ol{\R}=[-\infty,+\infty]$ denotes the extended real line.
For a function $f \colon \R^{n} \to \ol{\R}$ the set
$\dom f := \{x \in \mathbb R^n \colon f(x) < \infty\}$ denotes
its effective domain.

We denote  by  $K^{\ast}$
the polar cone of a  cone $K\subseteq \R^n$, where $K$ is either  $\R^n$ or $\R^n_+$ in the following. 
For a closed convex set $C\subset\R^{n}$, $N_{C}(x)$ denotes the
normal cone at $x\in C$ and $\delta_{C}$ is the indicator function
of $C$, i.e.,~$\delta_{C}(x)=0$ if $x\in C$, and $\delta_{C}(x)=+\infty,$
otherwise. The orthogonal projection onto $C$ is denoted by $\Pi_{C}$.
In the specific case $C:=K:=\R^n_+$, we simply write $x_{+}=\Pi_{K}(x)$ and,
similarly, $x_{-}=\Pi_{K^{\ast}}(x)=x-x_{+}$. The identity matrix in $\R^{n\times n}$ is
denoted by $I_n$ or by $I$ if the dimension is clear from the context. The Euclidean vector norm and inner product are denoted
by $\|\cdot\|$ and $\la\cdot,\cdot\ra$, respectively. For two vectors $x,y\in\R^n$ we write $x\perp y$ whenever they are orthogonal.
For a matrix
$A\in\R^{m\times n}$, $\|A\|_{2}$ denotes its spectral norm, $A^{\top}$ the transpose of $A$ and $\|A\|$ the Frobenius norm induced by the inner product $\la A, B \ra = \tr(A^{\T} B)$, where $\tr(\cdot)$ returns the trace of a square matrix as argument. The least-squares solution of minimal Euclidean norm is denoted by $x_{\rm{LS}}$.

We assume that images are discretized on $n$ grid points in a two dimensional domain in $\R^{2}$. 
Using the one-dimensional discrete derivative operator
 \begin{equation}
 \partial_d : \R^d \to \R^{d}, \quad  \partial_d = \begin{cases} -1, & i=j<d,\\
 +1, & j=i+1,\\
 0, & \text{otherwise},
 \end{cases}
 \end{equation}
 along each spatial direction,  with $d\in\{M,N\}$,  we define the discrete gradient matrix of an $M\times N$ discrete image 
 \begin{equation}\label{eq:discrete-grad}
D =\bpm D_{1} \\D_{2}\epm = \begin{pmatrix}
 \partial_{M} \otimes I_{N}\\
I_{M} \otimes \partial_{N}
 \end{pmatrix}\in\R^{2n\times n},
 \end{equation}
 where $\otimes$ stands for the Kronecker product and $I_{M}, I_{N}$ are identity matrices of appropriate dimensions.

The following classes of convex functions are relevant to our investigation.
\begin{subequations} 
\begin{align}
\mc{F}_{c} & :=\{f\colon\R^{n}\to\ol{\R}\mid f\;\text{is convex, proper and lower semicontinuous}\},\\
\mc{F}_{c}^{1}(L) & :=\{f\in\mc{F}_{c}\mid f\in C^{1}\,\text{and \ensuremath{\nabla f} is \ensuremath{L}-Lipschitz-continuous}\},\\
\mc{F}_{c}^{1}(L,\mu) & :=\{f\in\mc{F}_{c}^{1}(L)\mid f\,\text{is \ensuremath{\mu}-strongly monotone convex}\}.
\end{align}
\end{subequations} We use subscripts $L_{f},\mu_{f}$ to specify
the corresponding concrete function $f$. $f^{\ast}$ denotes the
Legendre-Fenchel conjugate of $f\in\mc{F}_{c}$. It is well-known that $f^{\ast} \in \mc{F}_{c}$ if and only if $f\in\mc{F}_{c}$.

Given $f\in\mc{F}_{c}$ and $\alpha>0$, the \textit{proximal
mapping} of the point $x\in\R^{n}$ is defined by 
\begin{equation}\label{eq:def-P-lambda}
P_{\alpha}f(x):=\arg\min_{y\in\R^n}\Big\{ f(y)+\frac{1}{2\alpha}\|y-x\|^{2}\Big\}\in \dom f,
\end{equation}
whereas the \textit{Moreau envelope} is the function defined by 
\begin{equation}\label{eq:def-Moreau-envelope}
e_{\alpha}f(x):=\inf_{y\in\R^n}\Big\{ f(y)+\frac{1}{2\alpha}\|y-x\|^{2}\Big\}.
\end{equation}
This function is continuously differentiable with gradient \cite[Theorem 2.26]{Rockafellar:2009aa}
\begin{equation}\label{eq:nabla-Moreau}
\nabla e_{\alpha}f(x)=\frac{1}{\alpha}\big(x-P_{\alpha}f(x)\big).
\end{equation}

\section{Superiorization Versus Convex Optimization}\label{sec:SUPvsCO}

\subsection{Problem Formulation}\label{sec:Problem-Formulation}
We further specify problems \eqref{eq:hu-hc}. Throughout this paper, we assume
\begin{equation}\label{eq:ass-A}
A\in\R_{+}^{m\times n},\qquad\rank(A)=m<n.
\end{equation}
In the case of discrete tomography considered in Section \ref{sec:NumericalResults}, where $A$ represents the incidence relation of projection rays and cells covering a Euclidean domain, the full-rank condition can be ensured by, e.g., selecting a specific ray geometry, as illustrated in Section \ref{sec:NumericalResults}, or by slightly perturbing the nonzero entries in $A$ \cite{Petra2014}.

Regarding data errors, we adopt the basic Gaussian noise model 
\begin{equation}\label{eq:def-noise}
(Ax-b)_{i}\sim\mc{N}(0,\sigma^{2}),\;i\in[m].
\end{equation}

Regarding the definition
of $R_{\tau}\in\mc{F}^{1}_{c}(L_{R_{\tau}})$, we use the discrete gradient  matrix \eqref{eq:discrete-grad} and
index by $i\in[n]$ the vertices of the regular image grid.
Then $\bsm(D_{1}x)_{i}\\ (D_{2}x)_{i}\esm\in\R^{2}$ represents
the gradient at location $i$ in terms of the samples $x^{i},\,i\in[n]$
of a corresponding image function. We set 
\begin{equation}\label{eq:def-R}
R\colon\R^{n}\to\R_{+},\qquad R(x)=\sum_{i\in[n]}\big(|(D_{1}x)_{i}|+|(D_{2}x)_{i}|\big).
\end{equation}
To obtain a $C^{1}$-approximation, we note that $R$ is a support
function,  
\begin{subequations}
\begin{align}
R(x)&=\sup_{p\in C}\sum_{i\in[n]}\Big\la p_{i},\bsm(D_{1}x)_{i}\\ (D_{2}x)_{i}\esm\Big\ra,
\\
&\qquad
C=\big\{(p_1,\dots,p_i,\dots,p_n)\in\R^{2n}\mid p_i\in\R^2, \|p_i\|_\infty\le 1, i\in{[n]}\big\},
\end{align}
\end{subequations}
which enables the smooth approximation of $R$ in a standard way
\cite[Sect.~2.8]{Auslender:2003aa}, \cite{Combettes:2018aa}, by
\begin{subequations}\label{eq:def-R-tau}
\begin{align}
R_{\tau}(x) & =\tau\sum_{i\in[n]}\Big(\sqrt{1+(D_{1}x/\tau)_{i}^{2}}+\sqrt{1+(D_{2}x/\tau)_{i}^{2}}\Big)\\
 & =\sum_{i\in[n]}\Big(\sqrt{\tau^{2}+(D_{1}x)_{i}^{2}}+\sqrt{\tau^{2}+(D_{2}x)_{i}^{2}}\Big),\qquad0<\tau\ll1.\label{eq:R-tau}
\end{align}
\end{subequations} 
\begin{lemma}\label{lem:L-R-tau}
The gradient $\nabla R_{\tau}$ is Lipschitz continuous with constant
\begin{equation}\label{eq:LRtau-bound}
L_{R_{\tau}}\leq\frac{1}{\tau}\|D\|_{2}^{2}\leq\frac{8}{\tau}.
\end{equation}
\end{lemma}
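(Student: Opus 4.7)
The plan is to exploit the composite structure $R_\tau = \Phi_\tau \circ D$, where $\Phi_\tau \colon \R^{2n} \to \R$ is the separable function
\begin{equation*}
\Phi_\tau(y) = \sum_{j=1}^{2n} \phi_\tau(y_j), \qquad \phi_\tau(s) = \sqrt{\tau^2 + s^2}.
\end{equation*}
Under this identification, $R_\tau(x) = \Phi_\tau(Dx)$ and the chain rule gives $\nabla R_\tau(x) = D^\top \nabla \Phi_\tau(Dx)$, so it suffices to control the Lipschitz constant of $\nabla \Phi_\tau$ together with the spectral norm of $D$.

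First I would analyze the scalar building block $\phi_\tau$. A direct computation yields $\phi_\tau'(s) = s/\sqrt{\tau^2+s^2}$ and $\phi_\tau''(s) = \tau^2/(\tau^2+s^2)^{3/2}$. This second derivative is maximized at $s=0$ with value $1/\tau$, so $0 \leq \phi_\tau''(s) \leq 1/\tau$ uniformly in $s$. Since $\Phi_\tau$ is separable, its Hessian $\nabla^2 \Phi_\tau(y) = \diag(\phi_\tau''(y_1),\dots,\phi_\tau''(y_{2n}))$ has spectral norm at most $1/\tau$, which by the mean-value inequality makes $\nabla \Phi_\tau$ Lipschitz with constant $1/\tau$.

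Next I would pass to $R_\tau$ via the chain rule. For arbitrary $x,x' \in \R^n$,
\begin{equation*}
\|\nabla R_\tau(x) - \nabla R_\tau(x')\| = \|D^\top\bigl(\nabla \Phi_\tau(Dx) - \nabla \Phi_\tau(Dx')\bigr)\| \leq \|D\|_2 \cdot \tfrac{1}{\tau}\|D(x-x')\| \leq \tfrac{1}{\tau}\|D\|_2^2\,\|x-x'\|,
\end{equation*}
which establishes the first inequality $L_{R_\tau} \leq \|D\|_2^2/\tau$.

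For the final bound $\|D\|_2^2 \leq 8$, I would compute $\|D\|_2^2 = \|D^\top D\|_2 \leq \|D_1^\top D_1\|_2 + \|D_2^\top D_2\|_2$ using the block form \eqref{eq:discrete-grad}. Each $D_i^\top D_i$ is a discrete one-dimensional Neumann Laplacian (a tridiagonal matrix with $1$'s on the off-diagonals and diagonal entries in $\{1,2\}$), whose eigenvalues lie in $[0,4]$; summing the two contributions gives $\|D\|_2^2 \leq 8$. The main technical point is the uniform bound on $\phi_\tau''$ — everything else is chain rule plus a standard spectral estimate for the discrete gradient operator — so I do not anticipate any real obstacle.
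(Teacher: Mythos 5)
Your proof is correct and follows essentially the same route as the paper's: both arguments reduce to the uniform bound $\tau^{2}/(\tau^{2}+s^{2})^{3/2}\leq 1/\tau$ on the second derivative of $s\mapsto\sqrt{\tau^{2}+s^{2}}$ (your $\phi_{\tau}''$ is exactly the paper's $\widetilde s_{j;i}$) combined with the spectral estimate $\|D\|_{2}^{2}\leq 8$; you merely package the Hessian bound via the chain rule for $R_{\tau}=\Phi_{\tau}\circ D$ instead of expanding $\nabla^{2}R_{\tau}$ as a sum of rank-one terms, and you justify $\|D^{\top}D\|_{2}\leq 8$ through the Kronecker structure and the $[0,4]$ spectrum of the one-dimensional Neumann Laplacian where the paper invokes Gerschgorin. (Only cosmetic slips: the off-diagonal entries of $\partial_{d}^{\top}\partial_{d}$ are $-1$, not $+1$, and $D_{i}^{\top}D_{i}$ is a Kronecker product of that tridiagonal matrix with an identity rather than tridiagonal itself; neither affects the eigenvalue bound.)
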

\begin{proof}
We show that $\sup_{x \in \R^{n}}\|\nabla^{2}R_{\tau}(x)\|_{2}^{2} \leq \frac{8}{\tau}$ which implies \eqref{eq:LRtau-bound}. 
Denoting by $D_{1;i}, D_{2;i}$ the row vectors of the discrete gradient matrix \eqref{eq:discrete-grad}, we have 
\begin{equation}\label{eq:grad-R_tau}
\nabla R_{\tau}(x)
= \sum_{i \in [n]}\left(
\frac{\la D_{1;i}, x\ra}{\sqrt{\tau^{2}+\la D_{1;i}, x\ra^{2}}}D_{1;i}
+ 
\frac{\la D_{2;i}, x\ra}{\sqrt{\tau^{2}+\la D_{2;i}, x\ra^{2}}}D_{2;i}
\right)
\end{equation}
Consider any summand on the right-hand side of \eqref{eq:grad-R_tau} that has the form
\begin{equation}
s_{1;i}(x) D_{1;i} = \frac{\la D_{1;i},x\ra}{\sqrt{\tau^{2} + \la D_{1;i},x\ra^{2}}} D_{1;i}
\end{equation}
and $s_{2;i} D_{2;i}$ defined similarly. 
Using
\begin{equation}\label{eq:def-s-tilde-s}
\nabla s_{1;i}(x)
= \widetilde s_{1;i}(x) D_{1;i}
= \frac{1}{\sqrt{\tau^{2}+\la D_{1;i}, x\ra^{2}}}\Big(
1-\frac{\la D_{1;i}, x\ra^{2}}{\tau^{2}+\la D_{1;i}, x\ra^{2}}\Big) D_{1;i}
\end{equation}
and a similar expression for $\nabla s_{2;i}(x)=\widetilde s_{2;i}(x) D_{2;i}$, we obtain
\begin{equation}\label{eq:tilde-s-bound}
0 \leq \widetilde s_{1;i}(x) \leq \frac{1}{\tau},\qquad
0 \leq \widetilde s_{2;i}(x) \leq \frac{1}{\tau}
\end{equation}
and, consequently, with
\begin{equation}
\nabla^{2} R_{\tau}(x)
= \sum_{i \in [n]}\Big(D_{1;i}\nabla s_{1;i}(x)^{\T} + D_{2;i}\nabla s_{2;i}(x)^{\T}\Big)
\end{equation}
the expression
\begin{subequations}
\begin{align}
\|\nabla^{2} R_{\tau}(x)\|_{2}
&= \sup_{\substack{y \in \R^{n}\\ \|y\|=1}}\sum_{i \in [n]}\Big(\big\la y,D_{1;i}\nabla s_{1;i}(x)^{\T} y\big\ra
+ \big\la y, D_{2;i}\nabla s_{2;i}(x)^{\T} y \big\ra \Big)
\\
&\overset{\eqref{eq:def-s-tilde-s}}{=}
\sup_{\substack{y \in \R^{n}\\ \|y\|=1}}\sum_{i \in [n]}
\Big(\widetilde s_{1;i}(x)\la y, D_{1;i}D_{1;i}^{\T} y\ra + 
\widetilde s_{2;i}(x)\la y, D_{2;i}D_{2;i}^{\T} y\ra\Big).
\end{align}
\end{subequations}
Since all summands are nonnegative
\begin{subequations}
\begin{align}
\|\nabla^{2} R_{\tau}(x)\|_{2}
&\overset{\eqref{eq:tilde-s-bound}}{\leq}
\frac{1}{\tau}\sup_{\substack{y \in \R^{n}\\ \|y\|=1}}\Big\la 
y,\sum_{i \in [n]}(D_{1;i}D_{1;i}^{\T} + D_{2;i}D_{2;i}^{\T}) y \Big\ra
= \frac{1}{\tau}\sup_{\substack{y \in \R^{n}\\ \|y\|=1}}
\big\la y,(D_{1}^{\T} D_{1} + D_{2}^{\T} D_{2}) y \big\ra
\\
&\overset{\eqref{eq:discrete-grad}}{=} 
\frac{1}{\tau}\sup_{\substack{y \in \R^{n}\\ \|y\|=1}}
\la y, D^{\T} D y\ra
\leq \frac{1}{\tau}\|D\|_{2}^{2}
\leq \frac{8}{\tau},
\end{align}
\end{subequations}
where the last equality follows from applying the Gerschgorin circle theorem to the matrix $D^{\T} D$.
\end{proof}
\begin{lemma}\label{lem:gl-Lip-R-tau}
The function $R_{\tau}$ is globally Lipschitz continuous.
\end{lemma}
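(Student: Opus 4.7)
The plan is to exploit the explicit gradient formula \eqref{eq:grad-R_tau} established in the proof of Lemma~\ref{lem:L-R-tau} and show that $\nabla R_{\tau}$ is uniformly bounded on $\R^{n}$; since $R_{\tau}$ is continuously differentiable and convex, a uniformly bounded gradient immediately yields global Lipschitz continuity of $R_{\tau}$ itself (via the mean value theorem along line segments).

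First, observe that each scalar coefficient appearing in \eqref{eq:grad-R_tau} satisfies
\begin{equation*}
\Big|\frac{\la D_{j;i}, x\ra}{\sqrt{\tau^{2}+\la D_{j;i}, x\ra^{2}}}\Big| \leq 1, \qquad j \in \{1,2\},\ i\in[n],
\end{equation*}
uniformly in $x\in\R^{n}$. Writing $s(x) \in \R^{2n}$ for the vector stacking these coefficients componentwise (first the $s_{1;i}(x)$, then the $s_{2;i}(x)$), the gradient formula \eqref{eq:grad-R_tau} reads
\begin{equation*}
\nabla R_{\tau}(x) = D_{1}^{\T} s_{1}(x) + D_{2}^{\T} s_{2}(x) = D^{\T} s(x),
\end{equation*}
with $\|s(x)\|_{\infty}\le 1$, hence $\|s(x)\| \leq \sqrt{2n}$.

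Combining this with the spectral-norm bound $\|D\|_{2}\leq 2\sqrt{2}$ obtained in Lemma~\ref{lem:L-R-tau}, we get
\begin{equation*}
\|\nabla R_{\tau}(x)\| \leq \|D\|_{2}\,\|s(x)\| \leq 2\sqrt{2}\cdot\sqrt{2n} = 4\sqrt{n},
\end{equation*}
uniformly in $x\in\R^{n}$. Then for any $x, y \in \R^{n}$, the mean value theorem together with convexity of the segment $[x,y]$ gives $|R_{\tau}(x)-R_{\tau}(y)| \leq \sup_{z}\|\nabla R_{\tau}(z)\|\cdot \|x-y\| \leq 4\sqrt{n}\,\|x-y\|$, which proves global Lipschitz continuity of $R_{\tau}$.

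There is no real obstacle here: the essential input, namely the uniform bound on the entries of $s(x)$, is already buried in \eqref{eq:def-s-tilde-s}–\eqref{eq:tilde-s-bound}, and the rest is just the standard passage from a bounded gradient to a Lipschitz function. I would keep the proof short, effectively as a corollary of the computation carried out for Lemma~\ref{lem:L-R-tau}.
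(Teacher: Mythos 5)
Your proof is correct and follows essentially the same route as the paper: both arguments reduce the claim to a uniform bound on $\nabla R_{\tau}$ via the observation that the coefficients $s_{j;i}(x)$ in \eqref{eq:grad-R_tau} are bounded by $1$ in absolute value. The only (immaterial) difference is how the final constant is packaged -- the paper uses the triangle inequality over the row vectors $D_{j;i}$, while you factor the gradient as $D^{\T}s(x)$ and invoke the spectral-norm bound $\|D\|_{2}\leq 2\sqrt{2}$ from Lemma~\ref{lem:L-R-tau}, yielding the explicit constant $4\sqrt{n}$.
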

\begin{proof}
We show that $\sup_{x \in \R^{n}}\|\nabla R_{\tau}(x)\| \leq M$ for some $M>0$, which implies the claim. By \eqref{eq:grad-R_tau} and the proof of 
Lemma \ref{lem:L-R-tau} we have
\begin{equation}
\nabla R_{\tau}(x)
= \sum_{i \in [n]}\left( s_{1;i}(x) D_{1;i} + s_{2;i}(x) D_{2;i} \right),
\end{equation}
where $ |s_{1;i}(x)|\le 1$ and $ |s_{2;i}(x)|\le 1$. This gives $\|\nabla R_{\tau}(x)\|\le  \sum_{i \in [n]}\left( |D_{1;i} |+ | D_{2;i}| \right)=:M$.
\end{proof}
\subsection{The Superiorization Methodology (SM)}\label{sec:Intro-SM}
In this subsection, we briefly describe the SM in general terms and relate
it to the problems studied in this paper. Algorithm \ref{alg:SA}
will serve as a template for the concrete basic algorithms and their
superiorized versions worked out later in Section \ref{sec:Superiorization}.

Consider some given mathematically-formulated problem $\mathcal{T}$
and its solution set $\Omega_{\mc{T}}$. Typical instances of $\mathcal{T}$
are feasibility or optimization problems. Let $\mc{A}$ denote an
algorithmic operator such that, for any given $x_{0}\in\R^{n}$, it
generates sequences $\mc{X}_{\mc{A}}:=(x_{k}),$ by the iterative
process
\begin{equation}
x_{k+1}=\mc{A}(x_{k}),\qquad \forall k\geq0.\label{eq:basic-alg-1}
\end{equation}

The following definition, providing a criterion for iterative algorithms
that can be superiorized by bounded perturbations, is an adaptation
of \cite[Definition 1]{CDH_IP2010} to our notation used here.

\begin{definition}[bounded perturbation resilience] \label{definition-BPR-1}
Let $\Omega_{\mc{T}}$ be the solution set of some given problem $\mathcal{T}$
and let $\mc{A}$ be an algorithmic operator. The algorithm \eqref{eq:basic-alg-1}
is said to be bounded perturbation resilient with respect to $\Omega_{\mc{T}}$
if the following holds: If the algorithm \eqref{eq:basic-alg-1} generates
sequences $\mc{X}_{\mc{A}}$ that converge to points in $\Omega_{\mc{T}}$
for all $x_{0}\in\R^{n}$ then any sequence $\mc{Y}_{\mc{A}}=(y_{k})$,
generated by $y_{k+1}=\mc{A}(y_{k}+\beta_{k}v_{k}),$ also converges
to a point in $\Omega_{\mc{T}}$ for any $y_{0}\in\R^{n}$ provided
all terms $\beta_{k}v_{k}$ are ``bounded perturbations'' meaning
that the vector sequence $(v_{k})$ is bounded, $\beta_{k}\geq0$
for all $k\geq0$, and ${\sum_{k=0}^{\infty}}\beta_{k}<+\infty$.
\end{definition}

The objective of the SM is to transform a \textit{basic algorithm}
defined by an algorithmic opertor $\mc{A}$ into a \textit{superiorized
version of that algorithm} with respect to a \textit{target function}
$\phi$. If the basic algorithm \eqref{eq:basic-alg-1} generates
a sequence $(x_{k})$ that converges to some $x^{*}$ and is bounded
perturbation resilient, than the bounded perturbations, that typically
employ nonascending directions for the target function $\phi$, can
be used to find another point $y^{\ast}\in\Omega_{\mc{T}}$ that satisfies
$\phi(y^{\ast})\leq\phi(x^{\ast})$.

\begin{definition}[nonascending direction]\label{def:nonascend-1}
Given a function $\phi\colon\R^{n}\to\ol{\R}$ and a point $x\in\dom\phi$,
we say that $v\in\R^{n}$ is a \textit{nonascending vector for $\phi$
at} $x$, if $\|v\|\leq1$ and there is a $\ol{t}>0$ such that 
\begin{equation}
\phi(x+tv)\leq\phi(x),\quad\text{for all}\;t\in(0,\ol{t}].\label{eq:nonascend-1}
\end{equation}
\end{definition} A mathematical guarantee has not been found to date
that the overall process of the superiorized version of the basic
algorithm will not only retain its feasibility-seeking nature but
also preserve globally the target function reductions. This fundamental
question of the SM, called \textquotedblleft the guarantee problem
of the SM'', received recently partial answers in \cite{Censor-Levi:2019,cz3-2015}.
However, numerous works cited in \cite{Censor-bib} show that this
global function reduction of the SM occurs in practice in many real-world
applications.

Algorithm \ref{alg:SA} precisely defines the superiorized version
of a basic algorithm \eqref{eq:basic-alg-1} generated by $\mc{A}$. 
\begin{algorithm}
\DontPrintSemicolon
\textbf{initialization:} Set $N \in \mathbb{N}, k=0$ and pick an arbitrary initial point $y_{0} \in \mathbb{R}^{n}$.\;
\Repeat{a stopping rule is met.}{
  Given a current iterate $y_{k}$, set 
  $y_{k,0}\leftarrow y_{k}$ and pick 
  $N_{k} \in \{0, 1,2,\dotsc,N\}$.\;
  \For{$i \leftarrow 0$ \KwTo $N_{k}-1$ \label{SM-general-line-4}}{
    Pick $\beta_{k,i} \in (0,1]$ so that 
    $\sum_{k=0}^{\infty}  \sum_{i=0}^{N_{k}-1} \beta_{k,i} 
    < \infty$.\;
    Pick a nonascending direction $v_{k,i}$ with respect 
    to the target function $\phi$ at $y_{k,i}$ such that $\phi(y_{k,i}
    + \beta_{k,i} v_{k,i})\le \phi(y_{k,i})$. \;
    Compute the perturbation $y_{k,i} \leftarrow y_{k,i}
    + \beta_{k,i} v_{k,i}$.\; \label{SM-general-line-7}
  }
  Apply the basic algorithm and update
  $y_{k+1} \leftarrow \mc{A}(y_{k,N_{k}-1})$.\; \label{SM-general-line-8}
  Increment $k \leftarrow k+1$.\;}
\caption{Superiorized Version of a Basic Algorithm $\mc{A}$\label{alg:SA}}
\end{algorithm}
It interleaves the iterations of the
basic algorithmic operator $\mc{A}$ (line \ref{SM-general-line-8}
of Algorithm \ref{alg:SA}) with perturbations by a target function
reduction procedure $S$, defined by lines \ref{SM-general-line-4}-\ref{SM-general-line-7}
of Algorithm \ref{alg:SA}, and can be summarized as the iteration
\begin{equation}
y_{0}\in\R^{n},\quad y_{k+1}={\mc{A}}\big(S(y_{k})\big),\quad\forall k\geq0.\label{eq:superiorization-of-A}
\end{equation}
We observe the following.
\begin{enumerate}
\item The procedure $S$ works by applying $N_{k}$-times target function
reduction steps in an additive manner (line \ref{SM-general-line-7}).
\item The procedure $S$ also requires a summable sequence $(\beta_{k,i})$
that is, in practice, realized by choosing it to be a subsequence
of $(\gamma_{0}a^{k+i})$ where $\gamma_{0}$ and $a$ are positive
and $a<1$.
\item The perturbations by the procedure $S$ need not be applied after
\textit{each} application of the basic algorithm. This corresponds to the choice $N_k=0$. The question how
to coordinate the number of target function reduction steps and the
number of basic algorithmic operations in a superiorization algorithm
is of prime importance in practical applications. We call it the \emph{``balancing
problem of SM''.}
\end{enumerate}

In this paper we study the SM by applying Algorithm \ref{alg:SA}
with several basic algorithms for $\mc{A}$ and several target function
reduction procedures for $S$. Details are given in Section \ref{sec:Superiorization}.
In view of \eqref{eq:hu-hc}, specific basic algorithms $\mc{A}$
are used, based on the conjugate gradient (CG) iteration \cite[Sect. 8.3]{Saad:2003aa}
and the (projected) Landweber method (LW) \cite{Bauschke:1996aa},
respectively, to reduce each of the three functions \begin{subequations}\label{eq:def-gu-gc}
\begin{align}
g_{u}(x) & :=\frac{1}{2}\|Ax-b\|^{2},\label{eq:def-gu}\\
g_{u}^{\mu}(x): & =\frac{1}{2}\|Ax-b\|^{2}+\frac{\mu}{2}\|x\|^{2},\label{eq:def-gu-mu}\\
g_{c}(x) & :=\frac{1}{2}\|Ax-b\|^{2}+\delta_{\R_{+}^{n}}(x),\label{eq:def-gc}
\end{align}
\end{subequations} where the unconstrained and nonnegativity-constrained
cases are indicated by subscripts `\textit{u}' and `\textit{c}', respectively. The smooth
regularization with parameter $\mu$ in \eqref{eq:def-gu-mu} is chosen
to ensure applicability of the CG iteration as a basic algorithm to
the unconstrained least-squares problem, which achieves state of the
art performance.

In each case, Algorithm \ref{alg:SA} uses as stopping rule the notion
of $\varepsilon$-compatibility. For a given problem $\mathcal{T}$,
a \textit{proximity function} $\mathcal{P}r_{\mathcal{T}}:\mathbb{R}^{n}\rightarrow\mathbb{R}_{+}$
measures how incompatible $x$ is with it. Given an $\varepsilon>0,$
we say that $x$ is $\varepsilon$-\emph{compatible} with $\mathcal{T}$
if $\mathcal{P}r_{\mathcal{T}}\left(x\right)\leq\varepsilon$. We
look, for an $\veps>0$, a given problem $\mathcal{T}$ and a chosen
proximity function $\mathcal{P}r_{\mathcal{T}}$ at the set $\Gamma_{\veps}\subset\R^{n}$
of the form 
\begin{equation}
\Gamma_{\veps}:=\Big\{ x\in\R^{n}\mid\mathcal{P}r_{\mathcal{T}}\left(x\right)\leq\veps\Big\}.\label{eq:prox-1}
\end{equation}
We refer to such sets in the sequel as ``proximity sets''.

For the problems of reducing the three functions in \eqref{eq:def-gu-gc},
via the corresponding basic algorithms for each of them, we run the
superiorized version of each basic algorithm until a point $x^{\ast}$
in the corresponding proximity set \begin{subequations}\label{def:Gamma-sets}
\begin{align}
\Gamma_{u,\veps}:=\{x\in\R^{n}\mid g_{u}(x)\leq\veps\},\label{def:Gamma_u}\\
\Gamma_{u,\veps}^{\mu}:=\{x\in\R^{n}\mid g_{u}^{\mu}(x)\leq\veps\},\label{def:Gamma_mu_u}\\
\Gamma_{c,\veps}:=\{x\in\R^{n}\mid g_{c}(x)\leq\veps\}\label{def:Gamma_c}
\end{align}
\end{subequations} is obtained, respectively, where $\veps$ is fixed,
depending on the noise level \eqref{eq:def-noise}.

The target functions that we use, correponding to \eqref{eq:def-gu-gc},
are defined as \begin{subequations}\label{eq:def-phi-concrete} 
\begin{align}
\phi_{u}(x) & :=R_{\tau}(x),\\
\phi_{c}(x) & :=R_{\tau}(x)+\delta_{\R_{+}^{n}}(x),
\end{align}
\end{subequations} where $R_{\tau}$ is given by \eqref{eq:def-R-tau}.

\subsection{Convex Optimization (CO)}\label{sec:Intro-FBS}
Analogous to Section \ref{sec:Intro-SM}, we elaborate here on the  optimization scheme based on forward-backward splitting (FBS), to fix notation and to provide 
in Algorithm \ref{alg:FBS} a template for the concrete algorithms worked out in Section \ref{sec:Optimization}.

This algorithm is based on a decomposition \eqref{eq:f-f0-g} of a given objective function $h = f+g \in \mc{F}_{c}$, where $f$ is continuously differentiable with $L_{f}$-Lipschitz continuous gradient $\nabla f$.

\begin{algorithm}
\DontPrintSemicolon
\textbf{initialization:} Set $k=0$, $x_{0},y_{0} \in \dom g$, $t_{0}>1$, $a \in (0,2)$. Pick sequences $(\alpha_{k}) \subset \big(0,\frac{2-a}{L_{f}}\big]$, $(a_{k})\subset [a,2-\alpha_{k}L_{f}]$, $(\veps_{k})\subset \R_{++}$.\;
\Repeat{a stopping rule is met.}{
Set $z_{0} \in \R^{n}$ and $l=0$.\;
\Repeat{$z_{l}$ has error $\leq \veps_{k}$; then set $\ol{z}_{k}=z_{l}$.}{
Generate the next iterate $z_{l+1}$ of a sequence $(z_{l})$ converging to $P_{\alpha_{k}}g\big(y_{k}-\alpha_{k}\nabla f(y_{k})\big)$.\;
Increment $l \leftarrow l+1$.\;
}
Update $x_{k+1} = \ol{z}_{k}$.\;
Update $t_{k+1} = \frac{1}{2}\Big(1 + \Big(1+4\frac{a_{k}\alpha_{k}}{a_{k+1}\alpha_{k+1}} t_{k}^{2}\Big)^{1/2}\Big)$.\;
Update $y_{k+1} = x_{k+1} + \frac{t_{k}-1}{t_{k+1}}(x_{k+1}-x_{k}) + (1-a_{k})\frac{t_{k}}{t_{k+1}}(y_{k}-x_{k+1})$.\;
Increment $k \leftarrow k+1$.\;
}
\caption{Accelerated Forward-Backward Iteration with Inexact Proximal Points \label{alg:FBS}}
\end{algorithm}

Algorithm \ref{alg:FBS}, adopted from \cite{Villa:2013aa},  generalizes the basic FBS iteration \eqref{eq:intro-FBS-a} as follows.
\begin{itemize}
\item Besides the primal sequence $(x_{k})$ with step-sizes $(\alpha_{k})$, a sequence of auxiliary point $(y_{k})$ and step sizes $(t_{k})$ are used in order to accelerate FBS.
\item Acceleration is achieved 
\begin{itemize}
\item by computing proximal points of $y_{k}-\alpha_{k}\nabla f(y_{k})$ instead of $x_{k}-\alpha_{k}\nabla f(x_{k})$;
\item by inexact evaluation of the proximal mapping $P_{\alpha_{k}} g$.
\end{itemize}
\end{itemize}
Note that Algorithm \ref{alg:FBS} without acceleration reduces to the basic forward-backward iteration \eqref{eq:intro-FBS}.

Corresponding to the discussion preceding the two problems \eqref{eq:hu-hc}, the following four variants of the decomposition \eqref{eq:f-f0-g} are 
applied to \eqref{eq:hu-hc} in Section \ref{sec:Optimization}. 
The first two splittings handle the unconstrained case indicated by the subscript `u'.
\begin{subequations}\label{eq:splitting-u}
\begin{align}
h_{u}(x) &= f_{u}(x) + g_{u}(x), 
\label{eq:objective-u} \\ \label{eq:splitting-u-SM}
f_{u}(x) &= \lambda R_{\tau}(x), \qquad
g_{u}(x) = \frac{1}{2}\|A x-b\|^{2},
\\ \label{eq:splitting-u-reversed}
f_{u}(x) &= \frac{1}{2}\|A x-b\|^{2}, \qquad
g_{u}(x) = \lambda R_{\tau}(x).
\end{align}
\end{subequations}
The last two splittings handle the constrained case indicated by the subscript `c'.
\begin{subequations}\label{eq:splitting-c}
\begin{align}
h_{c}(x) &= f_{ u}(x) + g_{c}(x), 
\label{eq:objective-c} \\ \label{eq:splitting-c-SM}
f_{ u}(x) &= \lambda R_{\tau}(x), \qquad
g_{c}(x) = \frac{1}{2}\|A x-b\|^{2} + \delta_{\R_{+}^{n}},
\\ \label{eq:splitting-c-reversed}
f_{ u}(x) &= \frac{1}{2}\|A x-b\|^{2}, \qquad
g_{c}(x) = \lambda R_{\tau}(x) + \delta_{\R_{+}^{n}}.
\end{align}
\end{subequations}
The reason for considering the splittings \eqref{eq:splitting-u-reversed} and \eqref{eq:splitting-c-reversed}, in addition to \eqref{eq:splitting-u-SM}, \eqref{eq:splitting-c-SM}, is discussed in the following subsection.

\subsection{SM vs.~CO: Common Aspects and Conceptual Differences}

\label{sec:SM-CO-Preliminary} The two splittings \eqref{eq:splitting-u-SM}
and \eqref{eq:splitting-c-SM} resemble structurally the SM in that
the least-squares term can be viewed as the task to which the `basic
algorithm' is applied, by evaluating the proximal map $P_{\alpha_{k}}g$
(this does not change the set of minima of $g$). Furthermore, this
`basic algorithm' is perturbed by function reduction steps of the target
function $f$, performed by nonascending directions via negative gradients.

However, a key structural difference is that the roles of inner and
outer iterations in Algorithms \ref{alg:SA} and \ref{alg:FBS} are
interchanged. The SM uses multiple inner iterations (lines \ref{SM-general-line-4}-\ref{SM-general-line-7}
of Algorithm \ref{alg:SA}) in order to accumulate bounded perturbations,
whereas the basic algorithm $\mc{A}$ is updated in a single step
of the outer loop. By contrast, CO performs several iterations of
its `basic algorithm' of evaluating the proximal map $P_{\alpha_{k}}g$
in an inner loop, whereas the perturbation is updated in a single
step of the outer loop. Another difference is that the number $N_{k}$
of inner iterations in Algorithm \ref{alg:SA} is a tuning parameter
of the SM, whereas the corresponding number in the CO scheme follows
from a mathematical termination criterion, that ensures a sufficiently
descreasing error level so as to guarantee convergence of the outer
loop to a global minimizer of the objective function $h$.

These differences motivated us to study also the above splitting after
interchanging the roles of the functions $f$ and $g$. This defines
the two splittings \eqref{eq:splitting-u-reversed} and \eqref{eq:splitting-c-reversed}.
Adopting the viewpoint of the SM, this entails many iterative reduction
steps with respect to the target function $g$ through evaluating
the proximal mapping $P_{\alpha_{k}}g$.


\section{Superiorization}

\label{sec:Superiorization}

In this section, we present our SM algorithms, outlined in Subsection
\ref{sec:Intro-SM}, by specifying  -- cf. \eqref{eq:superiorization-of-A} --
different basic algorithmic operators $\mc{A}(\cdot)$, and
various target function reduction procedures $S(\cdot)$.
Instances of $\mc{A}$ solve problems
\eqref{eq:def-gu-gc}, whereas instances of $S$ take into account
the target functions \eqref{eq:def-phi-concrete} in order to superiorize
the basic algorithm implemented by $\mc{A}$. Algorithm \ref{alg:SA}
(see page \pageref{alg:SA}) details the interplay between $\mc{A}$
(line \ref{SM-general-line-8}) and its superiorization through $S$
(lines \ref{SM-general-line-4}-\ref{SM-general-line-7}).

Specific instances $\mc{A}_{\mrm{CG}}$, $\mc{A}_{\mrm{LW}}$ and
$\mc{A}_{\mrm{LW+}}$ of the basic algorithm $\mc{A}$ are worked
out in Subsection \ref{sec:basic-alg}, based on the conjugate gradient
(CG) iteration \cite[Sect. 8.3]{Saad:2003aa} and the (projected)
Landweber method (LW) \cite{Bauschke:1996aa}, respectively. Specific
instances $S_{\nabla}$ and $S_{\mrm{prox}}$ or $S_{\mrm{prox+}}$
of the target function reduction procedures $S$ are worked out in
Subsection \ref{sec:perturbations}, based on negative gradient steps
or on generalized gradient steps, respectively. The latter are defined
via the proximal mapping in order to handle nonsmooth convex constraints,
using a strategy which is common practice in convex optimization --
cf.~\eqref{eq:intro-FBS}.

Subsection \ref{sec:pert-resilience} is devoted to the notion of strong
perturbation resilience. Table \ref{tab:list-of-algorithms} lists
all combinations of $\mc{A}$ and $\mc{S}$ that we examine as the
SM approaches for solving the unconstrained and constrained least-squares
problems \eqref{eq:def-gu-gc} with target functions \eqref{eq:def-phi-concrete}.
These include the novel approaches that interleave either of the basic
algorithmic operators $\mc{A}_{\mrm{CG}}$, $\mc{A}_{\mrm{LW}}$ or
$\mc{A}_{\mrm{LW+}}$ with the target function reduction procedures
$S_{\mrm{prox}}$ or $S_{\mrm{prox+}}$ based on proximal mappings.

\begin{table}[h!]
\begin{center}
 \begin{tabular}{||c | c  | c | c ||} 
 \hline
Method & Basic &  Target function & Perturbation resilience\\
             & algorithm & reduction procedure &\\ [0.5ex] 
 \hline\hline
 \texttt{GradSupCG} &  ${\mc{A}}_{\mrm{CG}}$ & $S_\nabla$ from Algorithm \ref{alg:sup-classic}&\checkmark, see \cite[Thm. A.1]{Zibetti:2018aa}\\ 
 \hline
 \texttt{GradSupLW} &  ${\mc{A}}_{\mrm{LW}}$ & $S_\nabla$ from Algorithm \ref{alg:sup-classic} & \checkmark, see \cite{Guo2018}\\
 \hline
\texttt{ProxSupCG} & ${\mc{A}}_{\mrm{CG}}$ & $S_{\mrm{prox}}(x,\beta_k)$ from  \eqref{alg:sup-prox} & follows from Prop \ref{prop:prox-additive-form} and  \cite[Thm. A.1]{Zibetti:2018aa}\\
 \hline
\texttt{ProxSupLW} &  ${\mc{A}}_{\mrm{LW}}$  & $S_{\mrm{prox}}(x,\beta_k)$ from  \eqref{alg:sup-prox} & follows from \cite{Jin2016,Luo2019} \\
 \hline
\texttt{ProxCSupCG} &  ${\mc{A}}_{\mrm{CG}}$ & $S_{\mrm{prox+}}(x,\beta_k)$ from  \eqref{alg:sup-prox-plus} & open \\ 
\hline
\texttt{ProxCSupLW} &  ${\mc{A}}_{\mrm{LW}}$  & $S_{\mrm{prox+}}(x,\beta_k)$ from  \eqref{alg:sup-prox-plus} & open \\
\hline
\texttt{GradSupProjLW}  & ${\mc{A}}_{\mrm{LW+}}$  & $S_\nabla$ from Algorithm \ref{alg:sup-classic}&  \checkmark, see \cite{Jin2016,Guo2018} \\
\hline
\texttt{ProxSupProjLW} &  ${\mc{A}}_{\mrm{LW+}}$  & $S_{\mrm{prox}}(x,\beta_k)$ from \eqref{alg:sup-prox}& follows from \cite{Luo2019} \\[1ex] 
 \hline
\end{tabular}
\end{center}
\caption{List of superiorized algorithms. The left column shows the name tags used for the superiorized algorithms that are numerically evaluated in Section
\ref{sec:NumericalResults}. The second column from the left shows the basic algorithm used while the third column indicates our strategy for the target function reduction. The last
column indicates if the perturbation resilience property holds or is still open. Checkmark
means that the property has been proven.
}
\label{tab:list-of-algorithms}
\end{table}

\subsection{Basic Algorithms}\label{sec:basic-alg} 

We first consider the basic algorithms for the task of minimizing
the unconstrained least-squares objective $g_{u}$ \eqref{eq:def-gu}.
This problem always has a solution and $\min_{x\in\R^{n}}g_{u}(x)=0$
holds due to the \emph{underdetermined} full-rank matrix $A$. The
set of minimizers is an affine subspace that has the dimension of
the nullspace of $A$. In view of noisy measurements however, we merely
wish to find an approximate minimizer of $g_{u}$, one that is $\varepsilon$-compatible
with the set of minimizers, i.e., an element of $\Gamma_{u,\veps}$
\eqref{def:Gamma_u}, where $\veps$ is fixed, depending on the noise
level \eqref{eq:def-noise}. To this end we consider a basic algorithm
that monotonically decreases the function $g_{u}$.

A straightforward choice is the Landweber iteration, presented in
Algorithm \ref{eq:Landweber-basic}, which is a damped gradient descent
method with a constant step-size parameter $\gamma\in(0,2/\|A\|_{2}^{2})$,
see line \ref{LW_eval_2} of Algorithm \ref{alg:Landweber}, which
describes the basic algorithmic operator $\mc{A}_{LW}$. It makes
use of the spectral norm of $A$, which has to be computed or estimated
beforehand. We terminate the iteration when $g_{u}$ has reached an
$\varepsilon$-compatiblity dictated by the noise level.

\begin{algorithm}
\DontPrintSemicolon \textbf{initialization:} Set $k=0$, choose an
arbitrary initial point $x_{0}\in\mathbb{R}^{n}$, $\veps>0$ and
$\gamma\in(0,2/\|A\|_{2}^{2})$.\; \While{ $g(x_{k})>\veps$}{
Given a current iterate $x_{k}$ calculate $x_{k+1}={\mc{A}}_{\mrm{LW}}(x_{k},\gamma)$
by Algorithm \ref{alg:Landweber}.\; Increment $k\leftarrow k+1$.\;
} \caption{The Landweber Algorithm \label{eq:Landweber-basic}}
\end{algorithm}

\begin{algorithm}
\DontPrintSemicolon \SetKwInOut{Input}{input} \SetKwInOut{Output}{output}
\Input{Current iterate $x$.} \Output{ Updated iterate $x_{\mrm{new}}$.}
\Parameter{A step-size parameter $\gamma\in(0,2/\|A\|_{2}^{2})$.}
\Begin{ $g=A^{\top}(Ax-b)$,\; \label{LW_matrix_eval_1} $x_{\mrm{new}}=x-\gamma g$.
\label{LW_eval_2} } \caption{$x_{\mrm{new}}\leftarrow{\mc{A}}_{\mrm{LW}}(x,\gamma)$ \label{alg:Landweber}}
\end{algorithm}

The Landweber iteration can be easily extended to handle nonnegativity
constraints and be used as a basic algorithm for the task of minimizing
$g_{c}$ \eqref{eq:def-gc} by, simple to execute, projections onto
the nonnegative orthant. The resulting basic algorithm for finding
an approximate, i.e., $\varepsilon$-compatible, solution of $g_{c}$
is Algorithm \ref{eq:projLandweber-basic}. We note that the nonnegative
least-squares system might have an empty solution set as opposed
to the plain least-squares problem. Therefore, we assume that for
an appropriate choice of $\veps$ the proximity set $\Gamma_{c,\veps}$
\eqref{def:Gamma_c} is nonempty. 

\begin{algorithm}
\DontPrintSemicolon \textbf{initialization:} Set $k=0$, choose an
arbitrary initial point $x_{0}\in\mathbb{R}^{n}$, $\veps>0$ and
$\gamma\in(0,2/\|A\|_{2}^{2})$.\; \While{ $g_{c}(x_{k})>\veps$}{
Given a current iterate $x_{k}$ calculate $x_{k+1}={\mc{A}}_{\mrm{LW_{+}}}(x_{k},\gamma)$
by Algorithm \ref{alg:projLandweber}.\; Increment $k\leftarrow k+1$.\;
} \caption{The Projected Landweber Algorithm \label{eq:projLandweber-basic}}
\end{algorithm}

\begin{algorithm}
\DontPrintSemicolon \SetKwInOut{Input}{input} \SetKwInOut{Output}{output}
\Input{Current iterate $x$.} \Output{ Updated iterate $x_{\mrm{new}}$.}
\Parameter{A step-size parameter $\gamma\in(0,2/\|A\|_{2}^{2})$.}
\Begin{ $g=A^{\top}(Ax-b)$,\; \label{LW_matrix_eval_1} $x_{\mrm{new}}=\Pi_{\R_{+}^{n}}(x-\gamma g)=\max(x-\gamma g,0)$.
} \caption{$x_{\mrm{new}}\leftarrow{\mc{A}}_{\mrm{LW+}}(x,\gamma)$ \label{alg:projLandweber}}
\end{algorithm}

It is well-known that the (projected) Landweber iteration exhibits
slow convergence for ill-conditioned problems, becomes unstable in
further iterations and needs to be stopped early because of semi-convergence.
This motivates us to consider an additional basic algorithm that copes
better with ill-conditioned problems. For this purpose we adopt the
following CG algorithm, listed as Algorithm \ref{eq:CG-pert-res-basic},
which is a slight modification of \cite[Algorithm 8]{Zibetti:2018aa}.
Our choice will be justified by the good numerical results obtained
by a superiorized CG algorithm for an undersampled tomographic problem,
reported and discussed in Section \ref{sec:NumericalResults}.

\begin{algorithm}
\DontPrintSemicolon \textbf{initialization:} Set $k=0$, pick an
arbitrary initial point $x_{0}\in\mathbb{R}^{n}$ and choose $\mu>0$
small and $\veps>0$. Set $p_{0}=A^{\top}(b-Ax_{0})+\mu x_{0}$ and
$h_{0}=A^{\top}Ap_{0}+\mu p_{0}$.\; \While{ $g_{\mu}(x_{k})>\veps$}{
Given a current iterate $x_{k}$ and auxilary vectors $p_{k}$ and
$h_{k}$, \; calculate $(x_{k+1},p_{k+1},h_{k+1})={\mc{A}}_{\mrm{CG}}(x_{k},p_{k},h_{k})$
by Algorithm \ref{alg:CG-pert-res}.\; Increment $k\leftarrow k+1$.\;
} \caption{The Conjugate Gradient Algorithm \label{eq:CG-pert-res-basic}}
\end{algorithm}

\begin{algorithm}
\DontPrintSemicolon \SetKwInOut{Input}{input} \SetKwInOut{Output}{output}
\Input{Current iterates $x,p,h$.} \Output{ Updated iterates
$x_{\mrm{new}},p_{\mrm{new}},h_{\mrm{new}}$.} \Begin{ $g=A^{\top}(Ax-b)+\mu x$,\;
\label{CG_grad_update} $\beta=\la g,h\ra/\la p,h\ra$,\; $p_{\mrm{new}}=-g+\beta p$,\;
$h_{\mrm{new}}=A^{\top}Ap_{\mrm{new}}+\mu p_{\mrm{new}}$,\; \label{CG_matrix_eval_2}
$\gamma=-\la g,p_{\mrm{new}}\ra/\la p_{\mrm{new}},h_{\mrm{new}}\ra$,\;
\label{CG_dir_update} $x_{\mrm{new}}=x+\gamma p_{\mrm{new}}$.\;}
\caption{$(x_{\mrm{new}},p_{\mrm{new}},h_{\mrm{new}})\leftarrow{\mc{A}}_{\mrm{CG}}(x,p,h)$
\label{alg:CG-pert-res}}
\end{algorithm}

The updates of the current iterates in lines \ref{CG_grad_update}
and \ref{CG_dir_update} of Algorithm \ref{alg:CG-pert-res} differ
from the corresponding updates in \cite[Algorithm 8]{Zibetti:2018aa},
because in our case the CG iteration minimizes $g_{u}^{\mu}$ \eqref{eq:def-gu-mu}
and is applied to the regularized least-squares problem 
\begin{equation}\label{eq:reg-least-squares}
\min \frac{1}{2}\Big \| \bpm A\\ \sqrt{\mu} I \epm x -  \bpm b\\ 0 \epm\Big\|^2,
\end{equation}
with a small parameter $\mu>0$, 
in order to obtain a well-defined algorithm for the underdetermined
scenario \eqref{eq:ass-A} considered here. 
We show however that for an appropriate choice of parameter $\mu$ minimizing $g_{u}^{\mu}$ is equivalent to finding the minimal Euclidean norm solution of
$g_u$ \eqref{eq:def-gu}.
\begin{lemma}\label{lem:choice_of_mu} Let $A\in\R^{m\times n}$ with $\rank(A)=m<n$ and define $g_u$ and $g_{u}^{\mu}$ as in \eqref{eq:def-gu} and \eqref{eq:def-gu-mu}
respectively. If $x_{\rm{LS}}$ is the minimizer of
\begin{equation}\label{LASSO-2}
\min \|x\| \quad \text{subject to}\quad x\in\argmin_{y\in\R^n} g_u(y)
\end{equation}
then there exists a parameter $\mu=\mu(x_{\rm{LS}})\ge 0$
such that $x_{\rm{LS}}$ is also a minimizer of $g_{u}^{\mu}$ in \eqref{eq:def-gu-mu}.
\end{lemma}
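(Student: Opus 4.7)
The plan is to reduce the claim to a first-order optimality condition and exploit the fact that, by the full-rank assumption \eqref{eq:ass-A}, the linear system $Ax=b$ is consistent so that $\min_{x \in \R^n} g_u(x) = 0$ is attained on the affine subspace $\{x \in \R^n \mid Ax = b\}$. In particular, $x_{\rm{LS}}$ from \eqref{LASSO-2} satisfies $A x_{\rm{LS}} = b$.

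Next I would write down the optimality condition for $g_u^\mu$. Since $g_u^\mu$ is convex (strongly convex when $\mu > 0$), a point $x$ minimizes $g_u^\mu$ if and only if
\begin{equation}
\nabla g_u^\mu(x) = A^\top(Ax - b) + \mu x = 0.
\end{equation}
Substituting $x = x_{\rm{LS}}$ and using $Ax_{\rm{LS}} - b = 0$, this reduces to $\mu\, x_{\rm{LS}} = 0$. Hence $x_{\rm{LS}}$ minimizes $g_u^\mu$ if and only if $\mu\, x_{\rm{LS}} = 0$, and we can take
\begin{equation}
\mu(x_{\rm{LS}}) = \begin{cases} 0, & x_{\rm{LS}} \neq 0, \\ \text{any } \mu \ge 0, & x_{\rm{LS}} = 0, \end{cases}
\end{equation}
which establishes the existence statement.

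There is no real obstacle here; the argument is essentially first-order optimality combined with consistency of $Ax = b$. The only mildly delicate point is making clear that the choice $\mu = 0$ is legitimate, since $g_u^0 = g_u$ is still convex but no longer strongly convex, and the minimizer is then not unique; however, the lemma only asserts that $x_{\rm{LS}}$ lies in the minimizing set, which is trivially true by definition of $x_{\rm{LS}}$ through \eqref{LASSO-2}. Should one wish for a nontrivial selection when $x_{\rm{LS}} \neq 0$, one can emphasize that in this generic case the only admissible value is $\mu = 0$, explaining the notation $\mu = \mu(x_{\rm{LS}})$ in the statement.
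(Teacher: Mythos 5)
Your argument is correct and complete for the statement as literally written, but it takes a genuinely different route from the paper and, in doing so, exposes a tension with the paper's own proof. The paper does not look at $\nabla g_u^{\mu}$ at all: it rewrites \eqref{LASSO-2} as $\min\tfrac{1}{2}\|x\|^{2}$ subject to $\tfrac{1}{2}\|Ax-b\|^{2}\le 0$, invokes strong duality to obtain a dual multiplier $y^{\ast}$, notes that $x_{\rm{LS}}$ minimizes the Lagrangian $x\mapsto\tfrac{1}{2}\|x\|^{2}+y^{\ast}\cdot\tfrac{1}{2}\|Ax-b\|^{2}$, and concludes with the \emph{positive} value $\mu=1/y^{\ast}$, which is then actually used to set $\mu$ in the numerical experiments. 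Your first-order computation $\nabla g_u^{\mu}(x_{\rm{LS}})=\mu\, x_{\rm{LS}}$ shows that for $x_{\rm{LS}}\neq 0$ (the generic case $b\neq 0$) the \emph{only} admissible value is $\mu=0$, so the lemma holds only trivially. This is not a defect of your proof; it is the paper's duality step that is delicate: the constraint $\tfrac{1}{2}\|Ax-b\|^{2}\le 0$ admits no strictly feasible point, Slater's condition fails, and the dual supremum need not be attained (for $A=(1,0)$, $b=1$ the dual function is $y/(2(1+y))$ with unattained supremum $\tfrac{1}{2}$), so the ``dual solution $y^{\ast}$'' being inverted may not exist --- which is precisely the degeneration $\mu\to 0$ that your computation predicts. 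What your approach buys is an elementary, airtight verification of the claim together with an exact identification of the admissible $\mu$; what the paper's approach is aiming for is a concrete positive $\mu$ making $A^{\top}A+\mu I$ invertible so that CG is well defined on the underdetermined system --- a goal that, by your computation, can only be met approximately rather than exactly. If you want your write-up to connect to the paper, it would be worth adding one sentence stating this consequence explicitly.
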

\begin{proof}
Due to the assumption $\rank(A)=m$ we  always have $b\in\mc{R}(A)$. Consequently, the set of minimizers of $g_u$
is equal to the solution set of the consistent linear system $Ax=b$. Thus, problem \eqref{LASSO-2}
 is equivalent to
\begin{equation}\label{BPDN-1}
\min \|x\| \quad \text{subject to}\quad Ax=b.
\end{equation}
Now problem \eqref{BPDN-1} can be equivalently rewritten as
\begin{equation}\label{BPDN-2}
\min \frac{1}{2}\|x\|^2 \quad \text{subject to}\quad \frac{1}{2}\|Ax-b\|^2\le 0,
\end{equation}
in view of the monotonicity of $t\mapsto \frac{1}{2}t^2$ and the positivitivity of the Euclidean norm.
Clearly, $x_{\rm{LS}}$ also solves \eqref{BPDN-1} and \eqref{BPDN-2}. 
The full rank  assumption implies that the relative interior of the feasible set in  \eqref{BPDN-2} is nonemepty. Thus,
strong duality holds for \eqref{BPDN-2} and, therefore, there exists a dual solution $y^\ast$ of \eqref{BPDN-2}.
We consider  the Lagrangian of \eqref{BPDN-2} that reads
$$
L(x,y)=\frac{1}{2}\|x\|^2  + y\Big(\frac{1}{2}\|Ax-b\|^2\Big).
$$
The primal-dual optimal pair $(x_{\rm{LS}},y^\ast)$ is a saddle-point of $L$ and
 $L(x_{\rm{LS}},y^\ast)\le L(x,y^\ast)$ for all $x\in\R^n$. Thus $x\mapsto  \mu L(x,y^\ast) $ is minimized by $x_{\rm{LS}}$, which differs from the objective function 
$g_{u}^{\mu}$ in \eqref{eq:def-gu-mu} only by the scalar $\mu y^\ast$ in front of the least-squares term.
In conclusion, we can chose $\mu = 1/y^\ast$. Since $y^\ast$ is connected to $x_{\rm{LS}}$ through the optimality conditions, the parameter $\mu$ depends on $x_{\rm{LS}}$ too.
\end{proof}
Note that the CG Algorithm \ref{eq:CG-pert-res-basic}, as well as
\cite[Algorithm 8]{Zibetti:2018aa}, differ from the classic CG algorithm
for least-squares in that the gradient of the least-squares term is
evaluated at every individual iterate, and not by the, commonly used,
computationally more efficient update 
\begin{equation}
g_{k+1}=g_{k}+A^{\top}Ap_{k}+\mu p_{k},\label{classic-CG-grad-update}
\end{equation}
see \cite[Sect. 8.3]{Saad:2003aa}. As discussed in \cite{Zibetti:2018aa},
by doing so, one obtains an algorithm that performs exactly as CG,
but is resilient to bounded perturbations.

The recent work \cite{Zibetti:2018aa,Helou:2018aa} also considers
the \textit{preconditioned} CG iteration as a basic algorithm for
the SM. While in case of overdetermined systems preconditioning is
mandatory, CG without preconditioning works well in our underdetermined
scenario. However, the CG method cannot be directly applied to a consistent
linear system with nonnegativity constraints. There exist CG versions
for nonnegativity and box-constraints that use an active set strategy
but are hampered by frequent restarts of the CG iteration. A more
efficient box-constrained CG method for nonnegative matrices can be
found in \cite{Vollebregt2014}, but even its convergence theory it
still open, let alone its perturbation resilience.

As a viable alternative, we include constraints in the CG method via
superiorization, as detailed below in Subsection \ref{sec:perturbations}.

\subsection{Perturbation Resilience}

\label{sec:pert-resilience} In the sequel, let $\mc{A}$ denote either
of the operators ${\mc{A}}_{\mrm{LW}}$, ${\mc{A}}_{\mrm{LW+}}$,
${\mc{A}}_{\mrm{CG}}$ that define the basic algorithms in Algorithms
\ref{eq:Landweber-basic}, \ref{eq:projLandweber-basic} or \ref{eq:CG-pert-res-basic},
respectively. Likewise, let $\Gamma_{\veps}$ denote either of the
proximity sets $\Gamma_{u,\veps}$ of \eqref{def:Gamma_u}, $\Gamma_{c,\veps}$
of \eqref{def:Gamma_c} and $\Gamma_{u,\veps}^{\mu}$ of \eqref{def:Gamma_mu_u},
defined as the sub-level sets of the functions $g_{u}$, $g_{c}$
and $g_{u}^{\mu}$ in \eqref{eq:def-gu-gc}, respectively. We assume
that $\veps>0$ has been chosen large enough such that $\Gamma_{\veps}\neq\emptyset$.
As a consequence, for any $x_{0}\in\R^{n}$, the sequence $\mc{X}_{\mc{A}}:=(x_{k}),$
generated by by the iterative process \eqref{eq:basic-alg-1} will
terminate at a point in $x^{\ast}\in\Gamma_{\veps}$.

 Bounded perturbation resilience with respect to a nonempty solution
set, recall Definition \ref{definition-BPR-1}, is known to hold for the basic Landweber and projected Landweber
iteration, see \cite{Jin2016,Guo2018} and Table \ref{tab:list-of-algorithms}.
This implies bounded perturbation resilience of $\mc{A}_{LW}$ and
$\mc{A}_{LW+}$ .

A notion of perturbation resilience that is more relevant to concrete
applications considers algorithmic \textit{termination} instead of
asymptotic convergence of the basic algorithm and its superiorized
version. Termination is defined in terms of the $\veps$-output of
a sequence. \begin{definition}[$\veps$-output of a sequence with
respect to $\Gamma_{\veps}$]\label{def:eps-output} For some $\varepsilon>0$,
a nonempty proximity set $\Gamma_{\veps}$ and a sequence $\mc{X}:=(x_{k})$
of points, the $\veps$-output $O\left(\Gamma_{\veps},\mc{X}\right)$
of the sequence $\mc{X}$ with respect to $\Gamma_{\veps}$ is defined
to be the element $x_{k}$ with smallest $k\in\N$ such that $x_{k}\in\Gamma_{\veps}$.
\end{definition}

\begin{remark} Due to our assumption that $\Gamma_{\veps}\neq\emptyset$,
the $\veps$-output $O\left(\Gamma_{\veps},\mc{X_{\mc{A}}}\right)$
of the sequences $\mc{X}_{\mc{A}}$ exists, for either instance ${\mc{A}}_{\mrm{LW}}$,
${\mc{A}}_{\mrm{LW+}}$ or ${\mc{A}}_{\mrm{CG}}$ of ${\mc{A}}$,
with respect to either proximity set in \eqref{def:Gamma-sets}. If
$\mc{X}_{\mc{A}}$ is an \textit{infinite} sequence generated by the
iterative process \eqref{eq:basic-alg-1}, then $O\left(\Gamma_{\veps},\mc{X_{\mc{A}}}\right)$
is the \textit{output} produced by that algorithm when we add to it
a stopping rule based on $\Gamma_{\veps}$, as done in Algorithms
\ref{eq:Landweber-basic}, \ref{eq:projLandweber-basic} and \ref{eq:CG-pert-res-basic}.
\end{remark} We define the notion of strong perturbation resilience
from \cite[Subsection II.C]{super-med-phys12}, adapted to our notation.
\begin{definition}[strong perturbation resilience] Let $\Gamma_{\veps}$
denote the proximity set and let $\mc{A}$ be an algorithmic operator
as in \eqref{eq:basic-alg-1}. The algorithm \eqref{eq:basic-alg-1}
is said to be strongly perturbation resilient if the following conditions
hold:
\begin{enumerate}
\item there is an $\veps>0$ such that the $\veps$-output $O\left(\Gamma_{\veps},\mc{X_{\mc{A}}}\right)$
of the sequence $\mc{X}_{\mc{A}}$ exists, for every $x_{0}\in\R^{n}$;
\item for all $\veps\geq0$ such that $O\left(\Gamma_{\veps},\mc{X}_{\mc{A}}\right)$
is defined for every $x_{0}\in\R^{n}$, we also have that $O\left(\Gamma_{\veps'},\mc{Y}_{\mc{A}}\right)$
is defined, for every $\veps'\geq\veps$, and for every sequence $\mc{Y}_{\mc{A}}=(y_{k})$
generated by 
\begin{equation}
y_{k+1}=\mc{A}(y_{k}+\beta_{k}v_{k}),\qquad\forall k\geq0,
\end{equation}
where the terms $\beta_{k}v_{k}$ are bounded perturbations as specified
by Definition \ref{definition-BPR-1}. 
\end{enumerate}
\end{definition} \textbf{}

Note that if a problem $\mathcal{T}$ has a nonempty solution set
$\Omega_{\mc{T}}\ne\emptyset$ which is contained in a proximity set
$\Omega_{\mc{T}}\subset\Gamma_{\veps}$, then bounded perturbation
resilience implies strong perturbation resilience. Sufficient conditions
for strong perturbation resilience appeared in \cite[Theorem 1]{super-med-phys12}.
%
%
We note that $A_{CG}$ has been shown to be strongly perturbation
resilient in \cite[Thm. A.1]{Zibetti:2018aa}.


\subsection{Superiorization by Bounded Perturbations}

\label{sec:perturbations}

In this subsection we specify in detail the superiorized versions
of the basic algorithms \ref{eq:Landweber-basic}, \ref{eq:projLandweber-basic}
and \ref{eq:CG-pert-res-basic} discussed above that fit into the
general framework of Algorithm \ref{alg:SA}, see also \cite[page 5537]{super-med-phys12}.
Since we already fixed the algorithmic operators to be either $\mc{A}_{\mrm{LW}}$,
$\mc{A}_{\mrm{LW+}}$ or $\mc{A}_{\mrm{CG}}$ we need to specify the
target function reduction procedures $S$, that we will use to compute
the perturbations of the basic algorithms.

\subsubsection{Target Function Reduction Procedures}

The general target function reduction procedure described in Algorithm
\ref{alg:SA}, lines \ref{SM-general-line-4}-\ref{SM-general-line-7},
can be easily adapted for reducing the differentiable target function
$\phi_{u}$ \eqref{eq:def-phi-concrete} using nonascent directions
based on normalized negative gradients. This leads to Algorithm \ref{alg:sup-classic},
that is similar to \cite[Algorithm 1]{Helou:2018aa}. 
\begin{algorithm}
\DontPrintSemicolon
\SetKwInOut{Input}{input}
\SetKwInOut{Output}{output}
\Input{Current iterate $x$, current exponent $\ell$. }
\Output{Superior iterate $x_{\mrm{new}}$, updated exponent $\ell_{\mrm{new}}$.}
\Parameter{The parameters  $\ell$,  $a$, $\gamma_0$, $\kappa$.}
\Begin{
$y \leftarrow x$.\;
\For{$i=1,\dots , \kappa$}{
        \eIf{$\nabla R_\tau(y) \ne 0$}{
        $v \leftarrow -\nabla R_\tau(y)/\|\nabla R_\tau(y)\|$, see \eqref{eq:grad-R_tau},\;
         }
         {  $v \leftarrow 0$;}
	\Repeat{
	$R_\tau(y_{\mrm{new}})\le R_\tau(y)$.
	}{
	$\gamma \leftarrow \gamma_0 a^\ell$\;
	$y_{\mrm{new}} \leftarrow y + \gamma v$ \label{eq:sug-grad-additive-step}\;
	$\ell\leftarrow \ell+1$
	}
	$ y \leftarrow y_{\mrm{new}}. $
	}
        $ \ell_{\mrm{new}} \leftarrow \ell $,\;
        $ x_{\mrm{new}} \leftarrow y $.\;
}
\caption{$(x_{\mrm{new}},\ell_{\mrm{new}}) \leftarrow S_\nabla (x,\ell, a,\gamma_0,\kappa)$
\label{alg:sup-classic}
}
\end{algorithm}
Note that $S_{\nabla}$ repeats $\kappa$-times steps along normalized
negative gradients (line \ref{eq:sug-grad-additive-step}) to reduce
the target function $\phi_{u}$. Each normalized negative gradient
direction is scaled by parameter $\gamma$ of the form $\gamma_{0}a^{\ell}$
for some $\ell\in\N$ that as specified by procedure $S_{\nabla}$.
As a consequence, perturbations can become very small since $a\in(0,1)$.

We consider a second variant of the target function reduction procedure
$S$ that allows better control of the perturbation parameters $\beta_{k,i}$
from Algorithm \ref{alg:SA} line \ref{SM-general-line-7}. For the
target function $\phi_{u}$ \eqref{eq:def-phi-concrete}, we define
\begin{equation}\label{alg:sup-prox}
S_{\mrm{prox}}(x,\beta):=P_{\beta}\phi_{u}(x)=\arg\min_{z}\Big\{ R_{\tau}(z)+\frac{1}{2\beta}\|z-x\|^{2}\Big\}.
\end{equation}

For the reduction of the target function $\phi_{c}$ \eqref{eq:def-phi-concrete},
we define 
\begin{equation}\label{alg:sup-prox-plus}
S_{\mrm{prox+}}(x,\beta):=P_{\beta}\phi_{c}(x)=\arg\min_{z}\Big\{ R_{\tau}(z)+\delta_{\R_{+}^{n}}(z)+\frac{1}{2\beta}\|z-x\|^{2}\Big\},
\end{equation}
which allows us to incorporate the constraints in the target function
reduction procedure.

Perturbation by proximal points has been done in \cite{Luo2019},
as well as in \cite{Helou:2017aa}, as we became aware while preparing
this manuscript. However, the authors in \cite{Luo2019} do not take
into account constraints when calculating the perturbations.

One might also argue that computing such nonascent directions is expensive
and contradicts the spirit of the SM. However, the proximal point
in \eqref{alg:sup-prox} or \eqref{alg:sup-prox-plus} can be computed
efficiently in our case, e.g., by the box-constrained L-BFGS method
from \cite{LBFGS-B} that computes a highly accurate solution within
few iterations, as we demonstrate in Section \ref{sec:NumericalResults}.

Clearly, if $y_{k+\frac{1}{2}}:=S_{\mrm{prox}}(y_{k},\beta_{k})$,
then \begin{subequations} 
\begin{align}
\phi_{u}(y_{k+\frac{1}{2}}) & \le\phi_{u}(y_{k+\frac{1}{2}})+\frac{1}{2\beta_{k}}\|y_{k+\frac{1}{2}}-y_{k}\|^{2}\\
 & \le\phi_{u}(y_{k})+\frac{1}{2\beta_{k}}\|y_{k}-y_{k}\|^{2}=\phi_{u}(y_{k}),
\end{align}
\end{subequations} and equality holds if and only if $y_{k+\frac{1}{2}}=y_{k}$,
in view of the definition of the proximal mapping.

Furthermore, if $y_{k+\frac{1}{2}}:=S_{\mrm{prox+}}(y_{k},\beta_{k})$,
then \begin{subequations} 
\begin{align}
\phi_{c}(y_{k+\frac{1}{2}}) & \le\phi_{u}(y_{k+\frac{1}{2}})+\delta_{\R_{+}^{n}}(y_{k+\frac{1}{2}})+\frac{1}{2\beta_{k}}\|y_{k+\frac{1}{2}}-y_{k}\|^{2}\\
 & \le\phi_{u}(y_{k})+\delta_{\R_{+}^{n}}(y_{k})+\frac{1}{2\beta_{k}}\|y_{k}-y_{k}\|^{2}=\phi_{c}(y_{k}).
\end{align}
\end{subequations} Therefore, the \emph{decrease} of the target function
by applying $S_{\mrm{prox}}$ or $S_{\mrm{prox+}}$ is guaranteed
in either case.
\begin{proposition}\label{prop:prox-additive-form} Consider any basic operator $\mc{A}$  and a summable nonnegative sequence $(\beta_k)$, i.e., 
$\beta_k\ge 0$ and $\sum_{k=0}^\infty \beta_k< +\infty$. Let $\phi:=\phi_c$ from \eqref{eq:def-phi-concrete} and define the target function reduction procedure by $S(x,\beta_{k}) :=P_{\beta_k}\phi (x)$. Then the perturbations 
$y_{k+\frac{1}{2}} = S(y_k,\beta_{k}) $ of the iterates $y_k$ generated by the superiorized version of the basic algorithm \eqref{eq:superiorization-of-A}
are bounded perturbations, i.e.,
\begin{equation}
y_{k+\frac{1}{2}} = y_{k} + \beta_{k} v_k,
\end{equation}
where the vector sequence $(v_k)$ is bounded.
\end{proposition}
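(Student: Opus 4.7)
The plan is to combine the first-order optimality characterization of the proximal mapping with the global Lipschitz bound on $R_\tau$ established in Lemma \ref{lem:gl-Lip-R-tau}. Writing $\hat{y}_k := y_{k+\frac{1}{2}} = P_{\beta_k}\phi_c(y_k)$, the proximal point is a minimizer over $\R_+^n$ of the strongly convex functional $z \mapsto R_\tau(z) + \frac{1}{2\beta_k}\|z-y_k\|^2$. Hence $\hat{y}_k \in \R_+^n$, and the subgradient inclusion
\begin{equation*}
\frac{y_k - \hat{y}_k}{\beta_k} \in \partial\phi_c(\hat{y}_k) = \nabla R_\tau(\hat{y}_k) + N_{\R_+^n}(\hat{y}_k)
\end{equation*}
yields the decomposition $-v_k = \nabla R_\tau(\hat{y}_k) + w_k$ with some $w_k \in N_{\R_+^n}(\hat{y}_k)$. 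Lemma \ref{lem:gl-Lip-R-tau} then immediately controls the smooth part uniformly, $\|\nabla R_\tau(\hat{y}_k)\| \leq M$, where $M$ denotes the global Lipschitz constant of $R_\tau$.

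To handle the normal-cone term $w_k$, I would exploit the equivalent fixed-point reformulation $\hat{y}_k = \bigl[y_k - \beta_k \nabla R_\tau(\hat{y}_k)\bigr]_+$ read component-wise. On indices $i$ where $\hat{y}_{k,i} > 0$, complementary slackness forces $(w_k)_i = 0$, so $v_{k,i} = -(\nabla R_\tau(\hat{y}_k))_i$ inherits the bound $|v_{k,i}| \leq M$ directly. On indices where $\hat{y}_{k,i} = 0$, the sign condition $(w_k)_i \leq 0$ yields the lower bound $v_{k,i} \geq -(\nabla R_\tau(\hat{y}_k))_i \geq -M$, which already accounts for the ``inward'' side.

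A more direct estimate can be obtained by testing the prox objective against the feasible comparator $z = (y_k)_+ \in \R_+^n$ and exploiting the $\tfrac{1}{\beta_k}$-strong convexity of $z \mapsto R_\tau(z) + \delta_{\R_+^n}(z) + \tfrac{1}{2\beta_k}\|z-y_k\|^2$, combined with the $M$-Lipschitz property of $R_\tau$. A short calculation then produces $\|\hat{y}_k - (y_k)_+\| \leq 2M\beta_k$, after which the triangle inequality delivers $\|v_k\| \leq 2M + \|(y_k)_-\|/\beta_k$.

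The main obstacle is precisely the residual term $\|(y_k)_-\|/\beta_k$, which is not automatically uniformly bounded as $\beta_k \to 0$. Closing the argument requires exploiting the structure of the superiorized iteration \eqref{eq:superiorization-of-A}: each iterate $y_k = \mathcal{A}(\hat{y}_{k-1})$ is the image under the basic operator $\mathcal{A}$ of the feasible point $\hat{y}_{k-1} \in \R_+^n$, so boundedness of the trajectory generated by $\mathcal{A}$ (a standard property of all the basic algorithms discussed in Subsection \ref{sec:basic-alg}) furnishes the required uniform bound on $(v_k)$ in the sense of Definition \ref{definition-BPR-1}.
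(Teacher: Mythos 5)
Your core mechanism is the same as the paper's: characterize the proximal point by its first-order optimality condition and control the resulting quotient $(y_{k+\frac{1}{2}}-y_k)/\beta_k$ by the global Lipschitz bound $\|\nabla R_\tau\|\le M$ from Lemma \ref{lem:gl-Lip-R-tau}. The paper's proof writes $-\beta_k v_k = y_k - P_{\beta_k}\phi_c(y_k)$ via \eqref{eq:nabla-Moreau}, inserts the comparator $y=y_k$ into the variational inequality at the prox point, and obtains $\|y_k-y_{k+\frac{1}{2}}\|^2 \le \beta_k\,|\phi_c(y_k)-\phi_c(y_{k+\frac{1}{2}})| \le M\beta_k\|y_k-y_{k+\frac{1}{2}}\|$, hence $\|v_k\|\le M$. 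Your ``more direct estimate'' with comparator $(y_k)_+$ is the same computation; when $y_k\in\R_+^n$ it closes immediately (with constant $2M$ instead of $M$), and your component-wise normal-cone discussion is then redundant.

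The genuine gap is your final paragraph. You correctly isolate the problematic residual $\|(y_k)_-\|/\beta_k$, but boundedness of the trajectory generated by $\mc{A}$ does not control it: a uniform bound $\|(y_k)_-\|\le C$ only yields $\|v_k\|\le 2M + C/\beta_k$, and since summability forces $\beta_k\to 0$ this diverges unless $\|(y_k)_-\| = \mc{O}(\beta_k)$. None of the basic operators of Subsection \ref{sec:basic-alg} (Landweber, CG) comes with a guarantee that it violates nonnegativity by at most $\mc{O}(\beta_k)$ when applied to a feasible point, so the invocation of ``boundedness of the trajectory'' is a non sequitur as stated. It is worth noting that the obstruction you ran into is real and is present in the paper's own proof as well: taking $y=y_k$ in the variational inequality and invoking Lipschitz continuity of $\phi_c=R_\tau+\delta_{\R_+^n}$ both tacitly require $y_k\in\dom\phi_c=\R_+^n$ (Lemma \ref{lem:gl-Lip-R-tau} only gives Lipschitz continuity of $R_\tau$, not of $\phi_c$). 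Under that implicit feasibility assumption your argument is complete and essentially identical to the paper's; without it, neither your closing step nor the paper's choice of comparator is justified.
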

\begin{proof} By definition we have $y_{k+\frac{1}{2}} = P_{\beta_k}\phi (y_k)$. Thus, by \eqref{eq:nabla-Moreau} we have
\begin{equation}\label{eq:prox-BP-1}
- \beta_{k} v_{k} := \beta_{k}\nabla e_{\beta_{k}}\phi(y_{k})
= y_{k}-P_{\beta_{k}}\phi(y_{k})
= y_{k} -  y_{k+\frac{1}{2}}.
\end{equation}
Using the variational inequality that characterizes $y_{k+\frac{1}{2}}$, gives
\begin{equation}
\frac{1}{\beta_{k}}\la y_{k}-y_{k+\frac{1}{2}},y-y_{k+\frac{1}{2}}\ra + \phi(y_{k+\frac{1}{2}})-\phi(y) \leq 0,\qquad \forall y \in\dom\phi.
\end{equation}
Setting $y=y_{k}$, we get
\begin{equation}\label{eq:prox-BP-2}
\|y_{k}-y_{k+\frac{1}{2}}\|^{2}
\leq \beta_{k}|\phi(y_{k})-\phi(y_{k+\frac{1}{2}})| \leq M \beta_{k}\|y_{k}-y_{k+\frac{1}{2}}\|,
\end{equation}
for some $M>0$. The second inequality above follows from the Lipschitz continuity of $\phi$ in view of Lemma \ref{lem:gl-Lip-R-tau}.
Now \eqref{eq:prox-BP-1} and \eqref{eq:prox-BP-2} imply
$
\|v_k\|\le M.
$
\end{proof}

\subsubsection{The Superiorized Versions of the Basic Algorithms}

Taking these considerations into account, we investigate the following
superiorized versions of the basic algorithms that interleave either
${\mc{A}}_{\mrm{LW}}$, ${\mc{A}}_{\mrm{LW+}}$ or ${\mc{A}}_{\mrm{CG}}$
with $S_{{\nabla}}$, $S_{\mrm{prox}}$ or $S_{\mrm{prox+}}$, as
introduced above.

We start with the Landweber and projected Landweber algorithms and
present in Algorithm \ref{eq:superiorized-grad-LW} the superiorized
version of the Landweber algorithm, called \texttt{GradSupLW}, that
combines ${\mc{A}}_{\mrm{LW}}$ from Algorithm \ref{alg:Landweber}
with the target function procedure $S_{\nabla}$ introduced in Algorithm
\ref{alg:sup-classic}. 
\begin{algorithm}
\DontPrintSemicolon
\textbf{initialization:} Set $k=0$ and pick an arbitrary initial point $y_{0} \in \mathbb{R}^{n}$ and parameters $\gamma_0>0$, $a\in(0,1)$ and $\veps>0$. Choose
 $\gamma \in(0,2/\|A\|^2)$.\;
\While{ $g (y_{k}) > \veps$}{
  Given a current iterate $y_{k}$, calculate the superiorized sequence\;
  by $(y_{k+\frac{1}{2}},\ell_{k+1}) = S_\nabla (y_k,\ell_k,a,\gamma_0,\kappa)$ \text{using Algorithm} ~\ref{alg:sup-classic}. \label{eq:CG-nonsascent-step}\; 
  Apply the basic algorithm \ref{alg:Landweber} and update
  $y_{k+1} = {\mc{A}}_{\mrm{LW}} (y_{k+\frac{1}{2}},\gamma)$\;
  and $\beta_{k+1}= \gamma_0 a^{k+1}$.\;
  Increment $k \leftarrow k+1$.\;
}
\caption{\texttt{GradSupLW}
\label{eq:superiorized-grad-LW}
}
\end{algorithm}

Furthermore, we present in Algorithm \ref{eq:superiorized-prox-plus-LW}
a second superiorized version of the Landweber algorithm, called \texttt{ProxCSupLW},
by combining the basic algorithmic operator ${\mc{A}}_{\mrm{LW}}$
from Algorithm \ref{alg:Landweber} with the target function reduction
procedure $S_{\mrm{prox+}}$ \eqref{alg:sup-prox-plus}. 
\begin{algorithm}
\DontPrintSemicolon
\textbf{initialization:} Set $k=0$ and pick an arbitrary initial point $y_{0} \in \mathbb{R}^{n}$ and parameters $\gamma_0>0$, $a\in(0,1)$ and $\veps>0$. Set $\beta_0=\gamma_0$
 and $\gamma \in(0,2/\|A\|^2)$.\;
\While{ $g (y_{k}) > \veps$}{
  Given a current iterate $y_{k}$, calculate the superiorized sequence\;
  by $y_{k+\frac{1}{2}} = 
  S_{\mrm{prox+}} (y_k,\beta_k)$, see ~\eqref{alg:sup-prox-plus}.\; 
  Apply the basic algorithm \ref{alg:Landweber} and update
  $y_{k+1} = {\mc{A}}_{\mrm{LW}} (y_{k+\frac{1}{2}},\gamma)$\;
  and $\beta_{k+1}= \gamma_0 a^{k+1}$.\;
  Increment $k \leftarrow k+1$.\;
}
\caption{\texttt{ProxCSupLW}
\label{eq:superiorized-prox-plus-LW}
}
\end{algorithm}
In view of the perturbation resilience of the Landweber Algorithm
\ref{eq:Landweber-basic}, the iterates of \texttt{GradSupLW} and
\texttt{ProxCSupLW} are guaranteed to converge to least-squares solutions.
On the other hand, the analogue combination of ${\mc{A}}_{\mrm{LW+}}$
with the target function reduction procedure $S_{\mrm{prox}}$ produces
a sequence that converges to a nonnegative least-squares solutions.
Despite their different theoretical behavior, these last two superiorized
versions of the Landweber algorithm generate sequences that show a
nearly identical numerically behavior, as demonstrated in Section
\ref{sec:NumericalResults}.

Proposition \ref{prop:rel-LW-PG} below shows the relation of algorithm
\texttt{ProxCSupLW} with an FBS iteration, recall \eqref{eq:intro-FBS}.
We point out that the parameter choice assumed below no longer qualifies
\texttt{ProxCSupLW} as a \textit{superiorized} Landweber algorithm,
since the sequence $(\beta_{k})$ then fails to be summable. However,
the modified algorithm converges and is guaranteed to return an optimal
solution. \begin{proposition}\label{prop:rel-LW-PG} Let $\gamma\in(0,2/L_{g_{u}})$
and $\lambda>0$. The algorithm obtained by setting $\veps=0$, $\gamma_{0}=\lambda\gamma$
and $a=1$ in the iteration procedure described in Algorithm \ref{eq:superiorized-prox-plus-LW}
generates a sequence $(y_{k+\frac{1}{2}})$ that converges to 
\begin{equation}
\underset{x\in\R^{n}}{\arg\min}\Big\{\lambda R_{\tau}(x)+\frac{1}{2}\|Ax-b\|^{2}+\delta_{\R_{+}^{n}}(x)\Big\}.\label{def-obj-1-2}
\end{equation}
\end{proposition}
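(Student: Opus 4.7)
The plan is to identify the iteration produced by Algorithm~\ref{eq:superiorized-prox-plus-LW} under the stated parameter choice with a standard (exact) forward--backward splitting applied to the splitting \eqref{eq:splitting-c-reversed}, and then invoke the classical convergence theorem of FBS.

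First, I would observe that setting $a=1$ and $\gamma_{0}=\lambda\gamma$ turns the sequence $(\beta_{k})$ into the constant sequence $\beta_{k}\equiv\lambda\gamma$, and setting $\veps=0$ forces the loop to run indefinitely, so the algorithm produces the two interleaved sequences
\begin{subequations}
\begin{align}
y_{k+\frac{1}{2}} &= P_{\lambda\gamma}\phi_{c}(y_{k}) = \argmin_{z}\Big\{R_{\tau}(z)+\delta_{\R_{+}^{n}}(z)+\tfrac{1}{2\lambda\gamma}\|z-y_{k}\|^{2}\Big\},\\
y_{k+1} &= y_{k+\frac{1}{2}} - \gamma A^{\T}(A y_{k+\frac{1}{2}}-b).
\end{align}
\end{subequations}

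Next, set $x_{k}:=y_{k+\frac{1}{2}}$ and define $f(x)=\tfrac{1}{2}\|Ax-b\|^{2}$ and $g(x)=\lambda R_{\tau}(x)+\delta_{\R_{+}^{n}}(x)$, corresponding to the splitting \eqref{eq:splitting-c-reversed}. Using the elementary rescaling identity $P_{\beta}(c\varphi)(x)=P_{c\beta}\varphi(x)$ for any $c>0$ and $\varphi\in\mc{F}_{c}$, together with $\lambda\delta_{\R_{+}^{n}}=\delta_{\R_{+}^{n}}$, one obtains
\begin{equation}
P_{\gamma}g(x) = P_{\gamma}\big(\lambda R_{\tau}+\delta_{\R_{+}^{n}}\big)(x) = P_{\lambda\gamma}\big(R_{\tau}+\delta_{\R_{+}^{n}}\big)(x) = P_{\lambda\gamma}\phi_{c}(x).
\end{equation}
Substituting this into the two-step update for $(x_{k})$ yields
\begin{equation}
x_{k+1} = P_{\lambda\gamma}\phi_{c}(y_{k+1}) = P_{\gamma}g\big(x_{k}-\gamma\nabla f(x_{k})\big),
\end{equation}
which is precisely the classical FBS iteration \eqref{eq:intro-FBS-a} applied to $h=f+g$ with constant step size $\gamma$.

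Finally, I would verify the hypotheses of the standard FBS convergence theorem. The function $f$ lies in $\mc{F}_{c}^{1}(L_{g_{u}})$ with $L_{g_{u}}=\|A\|_{2}^{2}$, and $g$ lies in $\mc{F}_{c}$ because $R_{\tau}$ is convex and smooth and $\delta_{\R_{+}^{n}}$ is convex, proper and lsc. Since $\gamma\in(0,2/L_{g_{u}})$, \cite[Corollary 28.9]{Bauschke:2010aa} applies and guarantees that $(x_{k})=(y_{k+\frac{1}{2}})$ converges to a minimizer of $h(x)=f(x)+g(x)=\tfrac{1}{2}\|Ax-b\|^{2}+\lambda R_{\tau}(x)+\delta_{\R_{+}^{n}}(x)$, i.e., to a point in the set described by \eqref{def-obj-1-2}.

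The main point (rather than obstacle) is the re-parametrization of the proximal step: the apparently mismatched scalars $\gamma$ (the gradient step size) and $\lambda\gamma$ (the proximal parameter) are exactly what is needed so that the proximal map of $\phi_{c}$ with parameter $\lambda\gamma$ coincides with the proximal map of $\lambda\phi_{c}$ with parameter $\gamma$; this is what lets us read off the correct $f,g$ splitting and invoke FBS convergence at step size $\gamma\in(0,2/\|A\|_{2}^{2})$.
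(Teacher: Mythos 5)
Your proposal is correct and follows essentially the same route as the paper: both reduce the algorithm with $a=1$, $\beta_k\equiv\gamma_0=\lambda\gamma$ to the exact forward--backward iteration for the reverse splitting $f=\frac{1}{2}\|A\cdot-b\|^{2}$, $g=\lambda R_{\tau}+\delta_{\R_{+}^{n}}$ applied to the shifted sequence $x_k=y_{k+\frac{1}{2}}$, and then invoke the standard FBS convergence theorem for $\gamma\in(0,2/L_{g_u})$. The only cosmetic difference is that you obtain the identification by directly composing the two update steps together with the prox rescaling identity $P_{\gamma}(\lambda\varphi)=P_{\lambda\gamma}\varphi$, whereas the paper arrives at the same two-step recursion by splitting Fermat's optimality condition with an auxiliary variable; the substance is identical.
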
 \begin{proof} We show that the iteration described
in Algorithm \ref{eq:superiorized-prox-plus-LW} and the FBS iteration,
recall \eqref{eq:intro-FBS}, 
\begin{equation}
x_{0}\in\R^{n}\quad x_{k+1}=P_{\gamma}\phi_{c}(x_{k}-\gamma\nabla g_{u}(x_{k})),\label{standard-PG}
\end{equation}
with $\phi_{c}=\lambda R_{\tau}+\delta_{\R_{+}^{n}}\in\mc{F}_{c}$,
$g_{u}=\frac{1}{2}\|A\cdot-b\|^{2}\in\mc{F}_{c}^{1}(L)$, are equivalent
for an appropriate choice of the parameter $\beta_{k}$. The iterates
$(x_{k})$ generated by \eqref{standard-PG} are converging to the
minimizer of $\phi_{c}+g_{c}$, the objective in \eqref{eq:fc}, see
\cite{Combettes-prox-splitt}, provided that
\begin{equation}
0<\gamma<\frac{2}{L_{g_{u}}},\label{g0-Lipschitz-1}
\end{equation}
where $L_{g_{u}}$ denotes the global Lipschitz constant of the gradient
of the least-squares term $g_{u}$ and that it can be estimated by
\begin{equation}
L_{g_{u}}\le\|A\|_{2}^{2}.\label{g0-Lipschitz-2}
\end{equation}
Furthermore, Fermat's optimality condition for problem \eqref{def-obj-1-2}
reads 
\[
0\in\lambda\nabla R_{\tau}(\bar{x})+A^{\top}(A\bar{x}-b)+\partial\delta_{\R_{+}^{n}}(\bar{x}).
\]
As done in \cite[Thm. 3.8]{Luo2019} in a different context, we introduce
an auxiliary variable and write \begin{subequations}\label{eq:rel-FB}
\begin{align}
0 & \in\lambda\nabla R_{\tau}(\bar{x})+\frac{1}{\gamma}(\bar{x}-x)+\partial\delta_{\R_{+}^{n}}(\bar{x}),\label{eq:rel-FB-a}\\
x & =\bar{x}-\gamma A^{\top}(A\bar{x}-b).
\end{align}
\end{subequations} Using the fact that \eqref{eq:rel-FB-a} is equivalent
to 
\[
\bar{x}=P_{\lambda\gamma}(R_{\tau}+\delta_{\R_{+}^{n}})(x)=\arg\min_{z}\Big\{ R_{\tau}(z)+\delta_{\R_{+}^{n}}(z)+\frac{1}{2\lambda\gamma}\|z-x\|^{2}\Big\},
\]
we recast \eqref{eq:rel-FB} as the iterative process \begin{subequations}\label{eq:rel-FB-2}
\begin{align}
\bar{x}_{k} & =P_{\lambda\gamma}(R_{\tau}+\delta_{\R_{+}^{n}})(x_{k}),\\
x_{k+1} & =\bar{x}_{k}-\gamma A^{\top}(A\bar{x}_{k}-b).
\end{align}
\end{subequations} Setting $y_{k+\frac{1}{2}}:=\bar{x}_{k}$, $y_{k}=x_{y}$
and $\beta_{k}:=\lambda\gamma$, the iterative process in \eqref{eq:rel-FB-2}
takes the form 
\begin{align}
y_{k+\frac{1}{2}} & =S_{\mrm{prox+}}(y_{k},\beta_{k}),\\
\beta_{k+1} & =\beta_{k}a,\\
y_{k+1} & =\mc{A}_{\mrm{LW}}(y_{k+\frac{1}{2}}),
\end{align}
and coincides with the iteration in Algorithm \ref{eq:superiorized-prox-plus-LW}
for $a=1$. On the other hand, the iterative process in \eqref{eq:rel-FB-2}
can be summarized by 
\[
\bar{x}_{k}=P_{\lambda\gamma}(R_{\tau}+\delta_{\R_{+}^{n}})(\bar{x}_{k-1}-\gamma A^{\top}(A\bar{x}_{k-1}-b)),\quad k\ge1,
\]
which is the FBS iteration \eqref{standard-PG}. Hence, $(\bar{x}_{k})$
and thus $(y_{k+\frac{1}{2}})$ converge to a minimizer of \eqref{def-obj-1-2}.
\end{proof}


Combinations between the Landweber and projected Landweber algorithms
and the suggested choices of target function reduction procedures
lead to different SM algorithms, that are summarized in Table \ref{tab:list-of-algorithms}.

Algorithm \ref{eq:superiorized-grad-CG} presents the superiorized
version of the CG algorithm, called \texttt{GradSupCG}, taken from
\cite[Algorithm 7, Algorithm 8]{Zibetti:2018aa}, but applied here
to the regularized least-squares problem corresponding to $g_{u}^{\mu}$
in \eqref{eq:def-gu-mu}. 
\begin{algorithm}
\DontPrintSemicolon
\textbf{initialization:} Set $k=0$ and pick an arbitrary initial point $y_{0} \in \mathbb{R}^{n}$ and parameters $\kappa\in\N$, $\gamma_0>0$, $a\in(0,1)$ and $\veps>0$. Set $\ell_0=0$,
 $p_{0}=A^\top (b-A x_{0}) + \mu x_{0}$ and
$h_{0} = A^\top A p_{0} +\mu p_{0}$.\;
\While{ $g _\mu(y_{k}) > \veps$}{
  Given a current iterate $y_{k}$, set $N_k:=\kappa$ and calculate the superiorized sequence\;
  by $(y_{k+\frac{1}{2}},\ell_{k+1}) = S_\nabla (y_k,\ell_k,a,\gamma_0,\kappa)$ \text{using Algorithm} ~\ref{alg:sup-classic}. \label{eq:CG-nonsascent-step}\; 
  Apply the basic algorithm and update
  $(y_{k+1},p_{k+1},h_{k+1})  = {\mc{A}}_{\mrm{CG}} (y_{k+\frac{1}{2}},p_k,h_k)$.\;
  Increment $k \leftarrow k+1$.\;
}
\caption{\texttt{GradSupCG}
\label{eq:superiorized-grad-CG}
}
\end{algorithm}

A second superiorized version of the CG algorithm, called \texttt{ProxCSupCG},
presented in Algorithm \ref{eq:superiorized-prox-CG}, employs the
target function reduction procedure $S_{\mrm{prox+}}$ by proximal
points.

\begin{algorithm}
\DontPrintSemicolon
\textbf{initialization:} Set $k=0$ and pick an arbitrary initial point $y_{0} \in \mathbb{R}^{n}$ and parameters $\kappa\in\N$, $\gamma_0>0$, $a\in(0,1)$  and $\veps>0$. Set $\beta_0=\gamma_0$,
 $p_{0}=A^\top (b-A x_{0}) + \mu x_{0}$ and
$h_{0} = A^\top A p_{0} +\mu p_{0}$.\;
\While{ $g (y_{k}) > \veps$}{
  Given a current iterate $y_{k}$, calculate the superiorized sequence\;
  by $y_{k+\frac{1}{2}} = 
  S_{\mrm{prox+}} (y_k,\beta_k)$, see ~\eqref{alg:sup-prox-plus}.\; \label{eq:superiorized-prox-CG-line}
  Apply the basic algorithm and update
  $(y_{k+1},p_{k+1},h_{k+1})  = {\mc{A}}_{\mrm{CG}} (y_{k+\frac{1}{2}},p_k,h_k)$\;
  and $\beta_{k+1}= \gamma_0 a^{k+1}$.\;
  Increment $k \leftarrow k+1$.\;
}
\caption{\texttt{ProxCSupCG}
\label{eq:superiorized-prox-CG}
}
\end{algorithm}

Omitting the nonnegativity constraints by replacing $S_{\mrm{prox+}}$
with $S_{\mrm{prox}}$ in line \ref{eq:superiorized-prox-CG-line}
of Algorithm \ref{eq:superiorized-prox-CG} leads to a slightly different
algorithm that we call \texttt{ProxSupCG}. Clearly the sequence $(\beta_{k})$
is summable for $a\in(0,1)$. 
It is not straightforward to show that \texttt{ProxSupCG} converges to a minimizer of $g^{\mu}_u$,
by following the lines of the proof from \cite{Luo2019}. However, it follows from Proposition \ref{prop:prox-additive-form} and  \cite[Theorem A.1]{Zibetti:2018aa}
that  \texttt{ProxSupCG} terminates at a point in $\Gamma^\mu_{u,\veps}$ \eqref{def:Gamma_mu_u}.
To show that Algorithm \ref{eq:superiorized-prox-CG} also terminates with an $\veps$-compatible point we need a counterpart of Proposition \ref{prop:prox-additive-form}
for the \emph{constrained} target function $\phi_c$ \eqref{eq:def-phi-concrete}. We leave
this open problem for future work.


\section{Optimization}\label{sec:Optimization}

In Subsections \ref{sec:basic-FB} and \ref{sec:Inner-Loop} we work out the 
details of the convex optimization algorithms for solving problems \eqref{eq:objective-u} and \eqref{eq:objective-c}, based on the splittings \eqref{eq:splitting-u-SM} and \eqref{eq:splitting-c-SM}, in Sections \ref{sec:basic-FB} and \ref{sec:Inner-Loop}. Plain, accelerated and inexact versions of the FB-iteration are considered -- cf.~\eqref{eq:intro-FBS} and Algorithm \ref{alg:FBS}. 
The reverse splittings \eqref{eq:splitting-u-reversed} and \eqref{eq:splitting-c-reversed} are briefly considered in Subsection \ref{sec:reverse-FB}. Here, we confine ourselves to exact evaluations of the corresponding proximal mapping.

Throughout this section, we use the symbols $\phi$ and $\phi_{0}$ as shorthand for the objective functions that define the proximal mapping \eqref{eq:Palpha-gc}, for the constrained and unconstrained cases, respectively. This should not be confused with the usage of $\phi$ for denoting any given target function in Algorithm \ref{alg:SA}.

\subsection{Proximal Map, Inexact Evaluation} \label{sec:basic-FB}
In this subsection, we consider the first splitting \eqref{eq:splitting-u-SM} and \eqref{eq:splitting-c-SM} with and without the nonnegativity constraint $x \in K = \R_{+}^{n}$ and examine the proximal map
\begin{equation}\label{eq:Palpha-gc}
P_{\alpha} g_{c}(x) = \arg\min_{y}\Big\{\frac{1}{2\alpha}\|y-x\|^{2} + \frac{1}{2}\|A y-b\|^{2} + \delta_{K}(y)\Big\},
\end{equation}
whose evaluation is required both in the FBS iteration \eqref{eq:intro-FBS-a} and in the accelerated FBS  defined in  Algorithm \ref{alg:FBS}. 
Note that the unconstrained case $P_{\alpha} g_{u}$ is the special case $K = \R^{n}$.
\subsubsection{Optimality Condition, Duality Gap}\label{sec:Proxmap-OC-DG}

We rewrite \eqref{eq:Palpha-gc} as
\begin{subequations}\label{eq:prox-map-FB-basic}
\begin{align}\label{eq:prox-point-oly}
P_{\alpha} g_{c}(x) &= \ol{y} = \arg\min_{y} \phi(y),\\
\shortintertext{where}
\label{eq:def-phi-0}
 \phi(y) &= \phi_{0}(y)+\delta_{K}(y),\qquad
\phi_{0}(y) = \frac{1}{2}\la y,B_{\alpha} y\ra - \big\la c_{\alpha}, y\big\ra,
\\ \label{eq:def-B-c-alpha}
B_{\alpha} &= A^{\T} A + \frac{1}{\alpha} I_{n},\qquad
c_{\alpha} = \frac{1}{\alpha} x + A^{\T} b.
\end{align}
\end{subequations}
Then $\phi_{0} \in \mc{F}_{c}^{1}\Big(\|A\|^{2}+\frac{1}{\alpha}\Big)$ and we have
\begin{subequations}\label{eq:def-nabla-phi0}
\begin{align}
\nabla\phi_{0}(y) 
&= B_{\alpha} y-c_{\alpha}
= \frac{1}{2}(y-x)+A^{\T}(A y-b) 
\\
&= \frac{1}{2}(y-x)+\nabla g_{u}(y),
\end{align}
\end{subequations}
with $g_{u}$ given by \eqref{eq:splitting-u-SM}. 
Applying Fermat's optimality condition to the proximal point $\ol{y}$, which is the unique minimizer of $\phi$, yields the variational inequality
\begin{subequations}\label{eq:P-alpha-normal-cone} 
\begin{align}
-\nabla\phi_{0}(\ol{y})
= c_{\alpha} - B_{\alpha} \ol{y} &\in N_{K}(\ol{y})
\label{eq:P-alpha-normal-cone-a} \\
\gdw\qquad
\big\la c_{\alpha} - B_{\alpha} \ol{y}, z-\ol{y}\big\ra
&\leq 0,\qquad \forall z \in K,
\end{align}
\end{subequations}
with $N_{K}(\ol{y})$ denoting the normal cone of $K$ at $\ol{y}$.

We consider any feasible point $z \in K$, $z \neq y$, and compute and error measure in terms of the duality gap induced by a dual feasible point associated with $z$. 
\begin{lemma}
Let $z \in K$ and define the dual feasible point 
\begin{equation}\label{eq:def-dual-p}
p = p(z) = (c_{\alpha}-B_{\alpha} z)_{-}.
\end{equation}
Then the duality gap induced by the inexactness $z \approx \ol{y} = P_{\alpha} g(x)$ is denoted by $\rm{dgp}$ and given by
\begin{equation}\label{eq:dgp-z}
\dgp(z)
= \frac{1}{2}\|(c_{\alpha}-B_{\alpha} z)_{+}\|_{B_{\alpha}^{-1}}^{2} - \la p,z\ra
\geq 0.
\end{equation}
\end{lemma}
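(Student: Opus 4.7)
The plan is to recognize \eqref{eq:prox-map-FB-basic} as a strictly convex QP on the nonnegative orthant and to read off \eqref{eq:dgp-z} as its Lagrangian duality gap at the specific primal--dual pair $(z,-p)$. First I would set up the Lagrangian for $\min_y \phi_0(y)$ subject to $y\in K$, introducing a nonnegative multiplier $\mu \in K$ for the constraints $-y\leq 0$,
\[
L(y,\mu) = \tfrac{1}{2}\la y, B_\alpha y\ra - \la c_\alpha + \mu, y\ra,\qquad \mu \in K,
\]
and, since $B_\alpha\succ 0$, solve the inner minimization in closed form via $y^\ast(\mu) = B_\alpha^{-1}(c_\alpha + \mu)$ to obtain the concave dual function
\[
d(\mu) = -\tfrac{1}{2}\|c_\alpha + \mu\|_{B_\alpha^{-1}}^{2},\qquad \mu \in K.
\]

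Next I would check that $\mu := -p$ lies in $K$, which follows directly from $p = (c_\alpha - B_\alpha z)_- \in K^\ast = -K$ by the Moreau decomposition of $c_\alpha - B_\alpha z$. For this dual-feasible $\mu$ I would evaluate the primal--dual gap $\dgp(z) := \phi(z) - d(-p)$. The key algebraic identity is
\[
c_\alpha + \mu = c_\alpha - p = B_\alpha z + (c_\alpha - B_\alpha z)_+,
\]
obtained by adding and subtracting $B_\alpha z$ and invoking the Moreau decomposition. Expanding $\|c_\alpha + \mu\|_{B_\alpha^{-1}}^{2}$ with this identity, collecting terms against $\phi(z) = \tfrac{1}{2}\la z, B_\alpha z\ra - \la c_\alpha, z\ra$, and cancelling the quadratic in $z$ produces the expression $-\la z, c_\alpha - B_\alpha z\ra + \la z, (c_\alpha - B_\alpha z)_+\ra + \tfrac{1}{2}\|(c_\alpha - B_\alpha z)_+\|_{B_\alpha^{-1}}^{2}$, whose first two terms collapse to $-\la z, (c_\alpha - B_\alpha z)_-\ra = -\la p, z\ra$ by one further use of the Moreau decomposition. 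This yields exactly \eqref{eq:dgp-z}.

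Finally, nonnegativity of $\dgp(z)$ is immediate either by weak duality, since $-p$ is dual-feasible, or by observing separately that $\tfrac{1}{2}\|(c_\alpha - B_\alpha z)_+\|_{B_\alpha^{-1}}^{2}\geq 0$ because $B_\alpha^{-1}\succ 0$, while $-\la p, z\ra \geq 0$ because $p \in K^\ast = -\R_+^{n}$ and $z \in K = \R_+^{n}$. I do not expect a serious obstacle; the only delicate point is bookkeeping of sign conventions, namely the passage between the primal constraint $y\geq 0$, the nonnegative multiplier $\mu$, and the nonpositive dual variable $p$ used in the statement, together with the orthogonality relation $\la (c_\alpha - B_\alpha z)_+, (c_\alpha - B_\alpha z)_-\ra = 0$ underlying the Moreau decomposition.
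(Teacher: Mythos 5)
Your proposal is correct and follows essentially the same route as the paper: both form the Lagrangian dual of the strictly convex QP (your $d(\mu)$ with $\mu=-p$ is exactly the paper's $\psi(p)=-\tfrac{1}{2}\la c_{\alpha}-p,B_{\alpha}^{-1}(c_{\alpha}-p)\ra-\delta_{K^{\ast}}(p)$), take the dual-feasible point given by the Moreau decomposition of $c_{\alpha}-B_{\alpha}z$, and compute $\dgp(z)=\phi(z)-\psi(p(z))\ge 0$ by weak duality. The only difference is that you carry out explicitly the algebraic rearrangement that the paper leaves to the reader, and your bookkeeping of it is correct.
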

\begin{proof} 
Denote by $p \in K^{\ast} = \R_{-}^{n}$ the multiplier vector corresponding to the constraint $z \in K$. Then the dual problem corresponding to the primal problem \eqref{eq:prox-point-oly} reads
\begin{equation}
\max_{p}\psi(p),\qquad
\psi(p) = -\frac{1}{2}\la c_{\alpha}-p, B_{\alpha}^{-1}(c_{\alpha}-p)\ra - \delta_{K^{\ast}}(p).
\end{equation}
The unique pair of optimal primal and dual points $(\ol{y},\ol{p}$) are connected by
\begin{equation}\label{eq:p-y-bar}
\ol{p} = p(\ol{y}) = c_{\alpha} - B_{\alpha} \ol{y}.
\end{equation}
Choosing any point $z \in K$ and the corresponding dual feasible point 
\begin{equation}\label{eq:def-p(z)}
p = p(z) = (c_{\alpha}-B_{\alpha} z)_{-} = c_{\alpha}-B_{\alpha} z - (c_{\alpha}-B_{\alpha} z)_{+}
\end{equation}
yields the duality gap
\begin{equation}\label{eq:duality-gap-phi}
\dgp(z) = \phi(z) - \psi\big(p(z)\big) \geq 0
\end{equation}
which, after rearranging, becomes \eqref{eq:dgp-z}.
\end{proof}
Clearly, choosing $z=\ol{y}$ yields a zero duality gap: comparing \eqref{eq:p-y-bar} and \eqref{eq:def-p(z)} shows that $(c_{\alpha}-B_{\alpha} \ol{y})_{+} = 0$. In addition, we have $\ol{p} \perp \ol{y}$ by \eqref{eq:P-alpha-normal-cone-a}. Both relations imply $\dgp(\ol{y})=0$ by \eqref{eq:dgp-z}.

\subsubsection{Inexactness Criteria}\label{sec:Inexactness-Criteria}
We describe two different ways for assessing the inexactness of evaluations of the proximal mapping $P_{\alpha} g$. These measures enable to specify criteria for terminating \textit{early} the corresponding inner iterative loops (cf.~Algorithm \ref{alg:FBS}, lines 4-6), without compromising convergence of the FBS iteration.
\begin{description}
\item[Summable error sequences]
Let
\begin{equation}\label{eq:z-Palpha-g-inexact}
\ol{z}_{k} \approx P_{\alpha_{k}}g\big(x_{k}-\alpha_{k}\nabla f(x_{k})\big)
\end{equation}
be points obtained by inexact evaluations of the proximal mapping $P_{\alpha_{k}} g$ \eqref{eq:Palpha-gc}, at each iteration $k \in \N$. Denote the corresponding error vectors by 
\begin{equation}\label{eq:def-ek}
e_{k} = x_{k+1}-\ol{z}_{k}.
\end{equation}
Then the FBS algorithm \eqref{eq:intro-FBS} converges \cite[Theorem~3.4]{Combettes:2005aa} if
\begin{equation}\label{eq:ek-summability}
\sum_{k \in \N} \|e_{k}\| < +\infty.
\end{equation}
\item[Inexact subgradients]
A point $z \in \R^{n}$ is said to \textit{evaluate $P_{\alpha}g(x)$ with $\veps$-precision}, denoted by
\begin{subequations}\label{eq:P-alpha-eps-OC}
\begin{align}\label{eq:P-alpha-eps-OC-a}
z \approxeq_{\veps} P_{\alpha}g(x),\quad \veps > 0
\intertext{if}\label{eq:P-alpha-eps-OC-b}
\frac{1}{\alpha}(x-z) \in \partial_{\frac{\veps^{2}}{ 2\alpha}} g(z),
\end{align}
\end{subequations}
where the right-hand side is defined by the $\veps$-subdifferential at $z$ \cite[Section 23]{Rockafellar:1970ab}
\begin{subequations}\label{eq:def-eps-subdiff}
\begin{align}
\partial_{\veps} g(z) 
&= \{p \in \R^{n} \mid g(y) \geq g(z) + \la p,y-z\ra - \veps\},\qquad \forall y \in \R^{n}
\label{eq:def-eps-subdiff-a} \\ 
\label{eq:def-eps-subdiff-b}
&= \{p \in \R^{n} \mid  g(z) + g^{\ast}(p)-\la z, p\ra\leq \veps\}.
\end{align}
\end{subequations}
Now consider the sequence of inexact evaluations
\begin{equation}\label{eq:z-subgr-inexact}
x_{k+1} = \ol{z}_{k} \approxeq_{\veps_{k}} P_{\alpha_{k}}g\big(y_{k}-\alpha_{k}\nabla f(y_{k})\big)
\end{equation}
corresponding to line 8 of Algorithm \ref{alg:FBS}. 
Theorem 4.4 in \cite{Villa:2013aa} assures the $\mc{O}(1/k^{2})$ convergence rate of Algorithm \ref{alg:FBS} provided the error parameter sequence $(\veps_{k})_{k \in \N}$ corresponding to \eqref{eq:z-subgr-inexact} decays at rate
\begin{equation}\label{eq:ek-decay-rate}
\veps_{k} = \mc{O}\Big(\frac{1}{k^{3/2+\delta}}\Big),\quad \delta > 0.
\end{equation}
\end{description}

The second criterion \eqref{eq:P-alpha-eps-OC-a} requires to recognize when $z$ satisfies condition \eqref{eq:P-alpha-eps-OC-b}. 
Taking into account the specific form $g_{c} = g_{u}+\delta_{K}$ of $g$ as defined by \eqref{eq:splitting-u-SM} and \eqref{eq:splitting-c-SM}, the following lemma parametrizes any subgradient $p \in \partial_{\veps} g(z)$ by auxiliary points $z_{p}, w \in \R^{n}$ through conjugation, in order to obtain a more explicit form of relation \eqref{eq:P-alpha-eps-OC-a}.
\begin{lemma}\label{lem:p-zp-w}
Let $g = g_{u} + \delta_{K} \in \mc{F}_{c}$ with $g_{u} \in \mc{F}_{c}^{1}(L_{g_{0}})$ and $z \in \R^{n}$. Then finding a vector $p \in \partial_{\veps} g(z)$ is equivalent to finding a pair of vectors $z_{p}\in K$, $w \in N_{K}(z_{p})$ such that $p$ is given by
\begin{equation}\label{eq:p-zp-w}
p = \nabla g_{u}(z_{p}) + w
\qquad\text{with}\qquad
z_{p}\in K,\quad w \in N_{K}(z_{p})
\end{equation}
satisfies
\begin{equation}\label{eq:def-eps-subdiff-c}
g_{u}(z)-g_{u}(z_{p})-\la\nabla g_{u}(z_{p}),z-z_{p}\ra - \la w, z\ra \leq \veps.
\end{equation}
In the unconstrained case $K = \R^{n}$, we have $w=0$ and no constraint imposed on $z_{p}\in \R^{n}$.
\end{lemma}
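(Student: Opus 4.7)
The plan rests on the Fenchel--Young characterization \eqref{eq:def-eps-subdiff-b} of $\partial_{\veps} g(z)$, namely $g(z) + g^{\ast}(p) - \la p, z\ra \leq \veps$, together with the subdifferential sum rule $\partial g(z_{p}) = \nabla g_{u}(z_{p}) + N_{K}(z_{p})$ for $g = g_{u}+\delta_{K}$, which is valid here without any qualification condition because $\dom g_{u} = \R^{n}$. The bridge to \eqref{eq:def-eps-subdiff-c} is the Fenchel--Young equality $g_{u}(z_{p}) + g^{\ast}(p) = \la p, z_{p}\ra$, which holds precisely when $p \in \partial g(z_{p})$ and $z_{p}\in K$.

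For the direction $(\Leftarrow)$, suppose the pair $(z_{p},w)$ is given with the claimed properties. The sum rule immediately yields $p = \nabla g_{u}(z_{p})+w \in \partial g(z_{p})$, so Fenchel--Young gives $g^{\ast}(p) = \la p, z_{p}\ra - g_{u}(z_{p})$. Substituting into $g(z) + g^{\ast}(p) - \la p, z\ra \leq \veps$, splitting $\la p, z_{p}-z\ra = \la \nabla g_{u}(z_{p}), z_{p}-z\ra + \la w, z_{p}-z\ra$, and using the complementarity $\la w, z_{p}\ra = 0$ that is intrinsic to $w \in N_{K}(z_{p})$ when $K = \R_{+}^{n}$, one reads off \eqref{eq:def-eps-subdiff-c} term-by-term. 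The unconstrained case $K = \R^{n}$ specialises to $N_{K}(z_{p}) = \{0\}$, so $w = 0$ and no sign condition on $z_{p}$ remains.

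Conversely, let $p \in \partial_{\veps} g(z)$, so $g^{\ast}(p)<\infty$. Writing $g^{\ast}(p) = \sup_{y \in K}\{\la p, y\ra - g_{u}(y)\}$, I would argue that this supremum is attained at some $z_{p}\in K$. Fermat's rule then gives $p \in \partial g(z_{p})$, the sum rule produces $p = \nabla g_{u}(z_{p})+w$ with $w \in N_{K}(z_{p})$, and reversing the above calculation yields \eqref{eq:def-eps-subdiff-c}. The main obstacle is exactly this attainment step: for abstract $g_{u}\in\mc{F}^{1}_{c}(L)$ it requires a recession/coercivity argument, but in the paper's concrete setting $g_{u} = \tfrac{1}{2}\|A\cdot-b\|^{2}$ with the rank hypothesis \eqref{eq:ass-A}, finiteness of $g^{\ast}(p)$ together with the polyhedral structure of $K = \R_{+}^{n}$ forces existence of a maximizer by a standard argument from convex quadratic programming.
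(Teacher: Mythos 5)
Your proposal is correct and follows essentially the same route as the paper: characterize $\partial_{\veps} g(z)$ via the conjugate, parametrize $p\in\partial g(z_{p})$ through Fermat's condition (equivalently the sum rule) for the minimization defining $g^{\ast}(p)$, and use the complementarity $w\perp z_{p}$ to reduce \eqref{eq:def-eps-subdiff-b} to \eqref{eq:def-eps-subdiff-c}. The one point where you go beyond the paper is in flagging attainment of the supremum defining $g^{\ast}(p)$ in the $(\Rightarrow)$ direction --- the paper silently assumes a minimizer $z_{p}$ exists, whereas you correctly note that for a general $g_{u}\in\mc{F}^{1}_{c}(L)$ this can fail and justify it in the concrete quadratic-over-polyhedron setting, which is a genuine (if minor) improvement in rigor.
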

\begin{proof}
Denoting by $g^\ast$ the conjugate function of $g$, we can write
\begin{equation}
g^{\ast}(p) = \sup_{z}\{\la p,z \ra - g_{u}(z)-\delta_{K}(z)\}
= -\inf_{z}\{g_{u}(z)-\la p,z \ra + \delta_{K}(z)\},
\end{equation}
and Fermat's optimality condition for the latter minimization problem reads $p \in \nabla g_{u}(z_{p}) + N_{K}(z_{p})$, which is equivalent to \eqref{eq:p-zp-w}. Substituting $p$ and taking into 
account that $w \perp z_{p}$  due to  $z_{p}\in K$ and $w\in N_{K}(z_{p})$
gives $g^{\ast}(p) = \la\nabla g_{u}(z_{p}),z_{p}\ra - g_{u}(z_{p})$ and substitution into \eqref{eq:def-eps-subdiff-b} yields
\begin{equation}
g(z)+g^{\ast}(p)-\la z,p\ra
= g_{u}(z) + \la\nabla g_{u}(z_{p}),z_{p}\ra - g_{u}(z_{p})
- \la z, \nabla g_{u}(z_{p}) + w\ra \leq \veps,
\end{equation}
which is relation \eqref{eq:def-eps-subdiff-c}.
\end{proof}

In the following two subsections, we examine criterion \eqref{eq:P-alpha-eps-OC}, for both the unconstrained and the constrained case using Lemma \ref{lem:p-zp-w}, and 
compare that to using instead the error vectors \eqref{eq:def-ek}.

\subsubsection{Recognizing $z \approx_{\veps}P_{\alpha}g_{u}(x)$: The Unconstrained Case}\label{sec:recognizing-inexactness-unconstrained}
Let $K = \R^{n}$. Applying Lemma \ref{lem:p-zp-w}  shows that condition \eqref{eq:P-alpha-eps-OC-b} for recognizing $z \approx_{\veps} P_{\alpha} g(x)$ can be expressed as finding another point $z_{p}$ such that
\begin{subequations}\label{eq:z-Palpha-unconstrained}
\begin{align}
\frac{1}{\alpha}(x-z) &= \nabla g_{u}(z_{p})\qquad\text{and}
\label{eq:z-Palpha-unconstrained-a} \\ \label{eq:z-Palpha-unconstrained-b}
\frac{\veps^{2}}{2\alpha} 
&\geq g_{u}(z)-g_{u}(z_{p})-\la\nabla g_{u}(z_{p}),z-z_{p}\ra.
\end{align}
\end{subequations}
Note that replacing the pair $(z,z_{p})$ by a \textit{single} point $z$ that satisfies \textit{both} conditions, implies that $z = \ol{y} = P_{\alpha}g(x)$ is the \textit{exact} proximal point: equality $z=z_{p}$ makes the inequality in \eqref{eq:z-Palpha-unconstrained-b} hold trivially, whereas condition \eqref{eq:z-Palpha-unconstrained-a} then reads $\nabla\phi_{0}(z)=0$ due to \eqref{eq:def-nabla-phi0}, which implies $z=P_{\alpha} g(x)$ by \eqref{eq:P-alpha-normal-cone-a} since $K=\R^{n}$.

If $z \neq z_{p}$, then \eqref{eq:z-Palpha-unconstrained-a} says that neither point is equal to $P_{\alpha}g(x)$, whereas the right-hand side of condition \eqref{eq:z-Palpha-unconstrained-b} is always nonnegative and measures the difference between $z$ and $z_{p}$ by a Bregman-like distance induced by $g_{u}$ (it is not a true Bregman distance \cite[Definition 3.1]{Bauschke:1997aa} since $g_{u}$ merely is convex). The decomposition \eqref{eq:z-Palpha-unconstrained-a} of the optimality condition determining $P_{\alpha}g(x)$ in terms of $(z,z_{p})$ enables to terminate any iterative algorithm that converges to $P_{\alpha}g(x)$, once the level of inexactness \eqref{eq:z-Palpha-unconstrained-b} is reached.

We conclude this subsection by comparing \eqref{eq:z-Palpha-unconstrained} with \eqref{eq:def-ek}. Let $\ol{y}=P_{\alpha} g(x)$ denote the exact proximal point as in Subsection \ref{sec:Proxmap-OC-DG} and let $z \approx_{\veps} P_{\alpha} g(x)$ satisfy \eqref{eq:z-Palpha-unconstrained}. Then the error vector \eqref{eq:def-ek} reads
\begin{equation}\label{eq:e-z-oly}
e = \ol{y}-z. 
\end{equation}
Since $\phi_{0}$ given by \eqref{eq:def-phi-0} is $\frac{1}{\alpha}$-strongly convex \cite[Definition 12.58]{Rockafellar:2009aa} and $\nabla\phi_{0}(\ol{y})=0$, we have the inequality
\begin{equation}\label{eq:norm-e-unconstrained}
\|e\| = \|z-\ol{y}\| 
\leq 2\alpha\|\nabla\phi_{0}(z)\|
\overset{\eqref{eq:def-nabla-phi0}}{=} 
2\|z-x-\alpha\nabla g_{u}(z)\|.
\end{equation}
Hence, this error vector evaluates violations of Equation \eqref{eq:z-Palpha-unconstrained-a} due to $z \neq \ol{y}$, for a \textit{single} point $z$ on both sides.

\subsubsection{Recognizing $z \approx_{\veps}P_{\alpha}g(x)$: The  Constrained Case}
\label{sec:recognizing-inexactness}

Let $K = \R_{+}^{n}$. In view of \eqref{eq:P-alpha-eps-OC-b}, we identify $p = \frac{1}{\alpha}(x-z)$ in \eqref{eq:p-zp-w}. Lemma \ref{lem:p-zp-w} then shows that 
$z \approx_{\veps} P_{\alpha} g$ holds if there is another point $z_{p}$ such that
\begin{subequations}\label{eq:z-Palpha-constrained} 
\begin{align}
\frac{1}{\alpha}(x-z)-\nabla g_{u}(z_{p})
&= w 
\qquad\text{with}\qquad
z_{p} \perp w,\quad z, z_{p}\in K,\quad w\in N_{K}(z_{p})
\label{eq:z-Palpha-constrained-a} \\
\label{eq:z-Palpha-constrained-b}
\frac{\veps^{2}}{2\alpha}
&\geq g_{u}(z)-g_{u}(z_{p})-\la\nabla g_{u}(z_{p}),z-z_{p}\ra - \la w, z \ra.
\end{align}
\end{subequations}
The discussion above \eqref{eq:z-Palpha-unconstrained} applies here analogously. In particular, if $z=z_{p}$, then \eqref{eq:z-Palpha-constrained-a} reads
\begin{equation}\label{eq:rec-Pageps-OC}
\frac{1}{\alpha}(x-z)-\nabla g_{u}(z)
\overset{\eqref{eq:def-nabla-phi0}}{=} 
c_{\alpha}-B_{\alpha} z \leq 0 = w \in N_{K}(z)
\end{equation}
which is the optimality condition \eqref{eq:P-alpha-normal-cone} that uniquely determines the proximal point $z=\ol{y}=P_{\alpha}g(x)$.

We conclude this subsection by comparing \eqref{eq:z-Palpha-constrained} with \eqref{eq:def-ek} and consider again the error vector \eqref{eq:e-z-oly}, taking additionally into account the nonnegativity constraint $z \in K$. 
Since $\phi(y)-\frac{1}{2\alpha}\|y-x\|^{2}=g_{c}(x)$ is convex, $\phi$ is $\frac{1}{\alpha}$-strongly convex. Exploiting weak duality 
\eqref{eq:duality-gap-phi}, in particular $\phi(\ol{y})\geq\psi(p),\,\forall p \in K^{\ast}$, we estimate
\begin{equation}\label{eq:norm-e-constrained}
\|e\|^{2}=\|z-\ol{y}\|^{2}
\leq 2\alpha\big(\phi(z)-\phi(\ol{y})\big)
\leq 2\alpha\big(\phi(z)-\psi\big(p(z)\big)\big)
= 2\alpha \dgp(z),\quad z \in K,
\end{equation}
with $p(z)$ given by \eqref{eq:def-p(z)}. The explicit expression \eqref{eq:dgp-z} shows how $\dgp(z)$ evaluates the non-optimality of the dual point $p(z)$. Comparing the left-hand sides of \eqref{eq:z-Palpha-constrained-a} (where $z=z_{p}$) and \eqref{eq:rec-Pageps-OC} shows that the non-optimality results from $z \neq z_{p}$.

\subsection{Inner Iterative Loop: Accelerated Primal-Dual Iteration}\label{sec:Inner-Loop}
We work out in this subsection the inner iterative loop of Algorithm \ref{alg:FBS}, lines 4--7. 
We apply the primal-dual optimization approach \cite{Chambolle:2011aa} to two different problem decompositions of the corresponding optimization problem \eqref{eq:Palpha-gc}, in order to generate a minimizing sequence $(z_{l})$ that is terminated using the inexactness criteria of Subsection \ref{sec:basic-FB}. The first decomposition involves the inversion of the matrix $B_{\alpha}$ defined by \eqref{eq:def-B-c-alpha}. The second alternative decomposition avoids this inversion. There is a tradeoff regarding the overall computational efficiency: more powerful iterative steps converge faster but are more expensive computationally.

Since the \textit{inner} iterative loop will be only considered in this subsection, we drop the fixed arbitrary \textit{outer} loop iteration index $k$. The argument $x$ of \eqref{eq:Palpha-gc} either denotes the argument $x = x_{k} - \alpha_{k}\nabla f(x_{k})$ of the FBS algorithm \eqref{eq:intro-FBS-a} or the argument $x = y_{k} - \alpha_{k}\nabla f(y_{k})$ of the accelerated FBS iteration in Algorithm \ref{alg:FBS}. The vector $c_{\alpha}$  is then defined accordingly by \eqref{eq:def-B-c-alpha}. In either case, the exact proximal point is denoted by $\ol{y}=P_{\alpha}g(x)$ and satisfies the relations of Subsection \ref{sec:Proxmap-OC-DG}. As discussed below, the inner iterative loop generates a sequence $(z_{l})$ that is terminated once $z_{l} \approx \ol{y}$ is sufficiently close to the exact proximal point according to the inexactness criteria of Subsection \ref{sec:basic-FB}.

The approach \cite{Chambolle:2011aa} applies to convex optimization problems that can be written as saddle-point problems of the form
\begin{equation}\label{eq:Lagrangian-inner-problem}
\min_{x \in X}\max_{y \in Y}\big\{
\la M x, y\ra + G(x)-F^{\ast}(y)\big\},
\end{equation}
where $X, Y$ are finite-dimensional real vector spaces, $M : X \to Y$ is linear and $F, G \in \mc{F}_{c}$. The algorithm involves the proximal maps $P_{\alpha}F^{\ast}$ and $P_{\alpha} G$ as algorithmic operations. In Sections \ref{sec:Opt-Basic-Decomposition} and \ref{sec:Opt-Avoiding-Inversion}, we apply this algorithm to problem \eqref{eq:Palpha-gc} in two different ways, based on two different decompositions of problem \eqref{eq:Palpha-gc} conforming to \eqref{eq:Lagrangian-inner-problem}. Subsequently, in Section \ref{sec:Inner-Termination-Unconstrained} and \ref{sec:Inner-Termination-Constrained}, we examine the application of the inexactness criteria of Section \ref{sec:Inexactness-Criteria} for terminating the primal-dual iteration.

\subsubsection{Basic Decomposition}\label{sec:Opt-Basic-Decomposition} We consider problem \eqref{eq:Palpha-gc} 
rewritten as \eqref{eq:prox-map-FB-basic}. Using a dual vector $p$, we write it in the saddle-point form \eqref{eq:Lagrangian-inner-problem}
\begin{equation}
\min_{y \in \R^{n}}\max_{p \in \R^{n}}\Big\{\la y, p\ra + \phi_{0}(y) - \delta_{K^{\ast}}(p)\Big\}.
\end{equation}
Note that $\phi_{0} \in \mc{F}_{c}^{1}(L_{\phi_{0}},\mu)$ is strongly convex with constant $\mu = \frac{1}{\alpha}$. Applying Algorithm 2 of \cite{Chambolle:2011aa} yields Algorithm \ref{alg:PD-Basic} listed below.
\begin{algorithm}
\DontPrintSemicolon
\textbf{initialization:} 
Choose $\tau_{0},\sigma_{0}>0$ with $\tau_{0}\sigma_{0}\leq 1$, $z_{0},p_{0}\in \R^{n}$ arbitrary. Set $\ol{z}_{0}=z_{0}$ and $l=0$. \;
\Repeat{$z_{l+1}$ satisfies an inexactness criterion.}{
$p_{l+1} = (p_{l}+\sigma_{l}\ol{z}_{l})_{-}$ \;
$z_{l+1} = \big(I_{n}+\tau_{l}B_{\alpha})^{-1}\big(z_{l}-\tau_{l}(p_{l+1}-c_{\alpha})\big)$ \; \label{eq:decomp-1-z-update}
$\theta_{l} = \Big(1+2\frac{\tau_{l}}{\alpha}\Big)^{-1/2},\quad
\tau_{l+1} = \theta_{l}\tau_{l},\quad
\sigma_{l+1} = \frac{\sigma_{l}}{\theta_{l}}$ \;
$\ol{z}_{l+1} = z_{l+1} + \theta_{l}(z_{l+1}-z_{l})$ \;
}
\caption{Proximal Map Decomposition, Primal-Dual Iteration \label{alg:PD-Basic}}
\end{algorithm}

Step \ref{eq:decomp-1-z-update} of Algorithm \ref{alg:PD-Basic} involves the inversion of the $n \times n$ matrix
\begin{equation}
I_{n}+\tau_{l} B_{\alpha} = \Big(1 + \frac{\tau_{l}}{\alpha}\Big) I_{n} + \tau_{l} A^{\T} A.
\end{equation}
Due to assumption \eqref{eq:ass-A}, the computational cost can be reduced by using the Sherman-Morrison-Woodbury formula \cite{Higham:2008aa}, quoted here with arbitrary invertible matrix $B$ and matrices $U, V$ with compatible dimensions,
\begin{equation}
(B+U V^{\T})^{-1} = B^{-1}-B^{-1} U(I+V^{\T} B^{-1} U)^{-1} V^{\T} B^{-1}.
\end{equation}
Applying this formula yields
\begin{equation}\label{eq:Balpha-Woodbury}
(I_{n}+\tau_{l} B_{\alpha})^{-1}
= \frac{\alpha}{\alpha+\tau_{l}}\bigg(
I_{n}-\frac{\alpha}{\alpha+\tau_{l}} A^{\T}\Big(\frac{1}{\tau_{l}}I_{m}+\frac{\alpha}{\alpha+\tau_{l}}A A^{\T}\Big)^{-1} A\bigg),
\end{equation}
which only involves the inversion of an $m \times m$ matrix. 

For very large problem sizes, however, evaluating this formula may be too costly as well. Therefore, in the next subsection, we consider an alternative problem decomposition that avoids matrix inversion. 

\subsubsection{Avoiding Matrix Inversion}\label{sec:Opt-Avoiding-Inversion}
In order to avoid matrix inversion in step \eqref{eq:decomp-1-z-update} of Algorithm \ref{alg:PD-Basic}, we rewrite the objective function $\phi$ of \eqref{eq:prox-map-FB-basic} in the form
\begin{equation}
\phi(y) = \frac{1}{2}\|A y\|^{2} + \frac{1}{2\alpha}\|y\|^{2}-\la c_{\alpha}, y\ra + \delta_{K}(y),
\end{equation}
where $K$ either is $\R^n$ or $\R^{n}_{+}$. Using a dual vector $q$, we dualize the term comprising $A$, to obtain the saddle-point problem
\begin{equation}
\min_{y}\max_{q}\Big\{\la A y,q\ra + \frac{1}{2\alpha}\|y\|^{2}-\la c_{\alpha}, y\ra + \delta_{K}(y) - \frac{1}{2}\|q\|^{2} \Big\}.
\end{equation}
Applying Algorithm 2 of \cite{Chambolle:2011aa} yields Algorithm \ref{alg:PD-No-Inversion} listed below.
\begin{algorithm}
\DontPrintSemicolon
\textbf{initialization:} 
Choose $\tau_{0},\sigma_{0}>0$ with $\tau_{0}\sigma_{0}\leq \frac{1}{\|A\|_{2}^{2}}$, $z_{0} \in \R^{n},q_{0}\in \R^{m}$ arbitrary. Set $\ol{z}_{0}=z_{0}$, $l=0$. \;
\Repeat{$z_{l+1}$ satisfies an inexactness criterion.}{
$q_{l+1} = \frac{1}{1+\sigma_{l}}(q_{l}+\sigma_{l} A \ol{z}_{l})$ \; \label{eq:alg-inner-2-q-update}
$z_{l+1} = \Pi_{K}\Big(\frac{\alpha}{\alpha+\tau_{l}}\big(
z_{l}-\tau_{l}(A^{\T} q_{l+1} - c_{\alpha})\big)\Big)$ \; \label{eq:alg-inner-2-z-update}
$\theta_{l} = \Big(1+2\frac{\tau_{l}}{\alpha}\Big)^{-1/2},\quad
\tau_{l+1} = \theta_{l}\tau_{l},\quad
\sigma_{l+1} = \frac{\sigma_{l}}{\theta_{l}}$ \; 
$\ol{z}_{l+1} = z_{l+1} + \theta_{l}(z_{l+1}-z_{l})$ \;\label{eq:alg-inner-2-olz-update}
}
\caption{Proximal Map Decomposition, Primal-Dual Iteration without Matrix Inversion\label{alg:PD-No-Inversion}}
\end{algorithm}

Note that this algorithm only requires a single matrix-vector multiplication with respect to $A$ and $A^{\T}$ in each iteration. While this is much cheaper than the matrix inversion of Algorithm \ref{alg:PD-Basic}, considerably more iterations are required to satisfy the inexactness criterion.

\subsubsection{Termination: The Unconstrained Case}\label{sec:Inner-Termination-Unconstrained}
Let $K = \R^{n}$. Then $\ol{y}=P_{\alpha}g(x)$ uniquely minimizes $\phi=\phi_{0}$, and is determined by the optimality condition
\begin{equation}
\nabla\phi_{0}(\ol{y}) = B_{\alpha}\ol{y}-c_{\alpha} = 0
\quad\gdw\quad
\ol{y} = B_{\alpha}^{-1} c_{\alpha} = P_{\alpha}g_{u}(x).
\end{equation}
Now suppose that applying a direct method for computing $B_{\alpha}^{-1} c_{\alpha} = (I_{n}+\alpha A^{\T} A)^{-1} c_{\alpha}$ is numerically intractable, due to the problem size, even when a formula analogous to \eqref{eq:Balpha-Woodbury} is used to reduce the problem size for matrix inversion from $n$ to $m < n$. Then one chooses an iterative numerical method that generates a sequence $(z_{l})$ converging to $\ol{y}$ until the inexactness condition $z_{l} \approxeq_{\veps}P_{\alpha} g(x)$ holds. As discussed in Subsection \ref{sec:recognizing-inexactness-unconstrained}, in order to make this criterion explicit, a pair of points $(z, z_{p})$ has to be chosen such that the two relations \eqref{eq:z-Palpha-unconstrained} are satisfied.

Let $z_{l+1}$ be a point generated by the iterative algorithm that approximately satisfies the optimality condition 
\begin{equation}
\nabla\phi_{0}(z_{l+1}) = \nabla g_{u}(z_{l+1}) - \frac{1}{\alpha}(x-z_{l+1}) \approx 0.
\end{equation}
Setting
\begin{equation}\label{eq:zp-z-unconstrained-0}
z_{p} = z_{l+1},\qquad
z = x-\alpha\nabla g_{u}(z_{p})
\end{equation}
makes condition \eqref{eq:z-Palpha-unconstrained-a} hold and we can evaluate the right-hand side of \eqref{eq:z-Palpha-unconstrained-b} in order to check if $z \approxeq_{\veps} P_{\alpha} g(x)$. It remains to be checked if the choice \eqref{eq:zp-z-unconstrained-0} is compatible with the \textit{specific} Algorithm \ref{alg:PD-No-Inversion}. Step \eqref{eq:alg-inner-2-z-update} reads
\begin{subequations}\label{eq:derivation-unconstrained}
\begin{align}
0 &= \frac{\alpha}{\alpha+\tau_{l}}\big(
z_{l}-\tau_{l}(A^{\T} q_{l+1} - c_{\alpha})\big) - z_{l+1} 
\\ \label{eq:choice-zp-z-unc-1}
&= \frac{1}{\alpha}\Big(x-z_{l+1}-\frac{\alpha}{\tau_{l}}(z_{l+1}-z_{l}) - \alpha A^{\T}(q_{l+1}-A z_{l+1})\Big) - A^{\T}(A z_{l+1}-b),
\end{align}
\end{subequations}
and the choice
\begin{equation}\label{eq:zp-z-unconstrained-1}
z_{p}=z_{l+1},\qquad
z := z_{p}-\frac{\alpha}{\tau_{l}}(z_{p}-z_{l}) - \alpha A^{\T}(q_{l+1}-A z_{p}),
\end{equation}
makes Equation \eqref{eq:choice-zp-z-unc-1} equal to the second equation of \eqref{eq:zp-z-unconstrained-0}.

A minor disadvantage of \eqref{eq:zp-z-unconstrained-0} is the additional evaluation of the gradient $\nabla g_{u}(z_{p})$. In the present unconstrained case, this can be avoided as follows. Since
the sequence of dual vectors $(q_{l})$ converges to $q_{l} \to A \ol{y}$ as line \ref{eq:alg-inner-2-q-update} of Algorithm \ref{alg:PD-No-Inversion} reveals, the expression $A^{\T}(q_{l+1}-b) \approx \nabla g_{u}(\ol{y})$ approximates the gradient. Hence, rearranging Equation 
\eqref{eq:derivation-unconstrained} into the form
\begin{equation}
0 = \frac{1}{\alpha}\Big(x-z_{l+1}-\frac{\alpha}{\tau_{l}}(z_{l+1}-z_{l})\Big) - A^{\T}(q_{l+1}-b),
\end{equation}
suggests the choice
\begin{equation}\label{eq:zp-z-unconstrained-2}
A z_{p} = q_{l+1},\qquad
z = z_{l+1}+\frac{\alpha}{\tau_{l}}(z_{l+1}-z_{l}).
\end{equation}
Then $A^{\T}(q_{l+1}-b)=\nabla g_{u}(z_{p})$ and condition \eqref{eq:z-Palpha-unconstrained-a} is satisfied. Condition \eqref{eq:z-Palpha-unconstrained-b} can be efficiently checked as well, even though \eqref{eq:zp-z-unconstrained-2} does not determine $z_{p}$ explicitly: 
\begin{subequations}\label{eq:inexactness-unconstrained-final}
\begin{align}
\frac{\veps^{2}}{2\alpha}
&\geq \frac{1}{2}\|A z-b\|^{2}-\frac{1}{2}\|A z_{p}-b\|^{2}-\la A z_{p}-b,A z-A z_{p}\ra
\\ \label{eq:inexactness-unconstrained-final-term}
&= \frac{1}{2}\big\|(A z-b)-(A z_{p}-b)\big\|^{2}
= \frac{1}{2}\|A z-q_{l+1}\|^{2}.
\end{align}
\end{subequations}
Thus, checking these conditions during the primal-dual iteration only requires negligible additional computation. Definition \eqref{eq:zp-z-unconstrained-2} of $z$ shows that the single additional vector-matrix multiplication $A z$ is implicitly done by evaluating the primal-dual steps \eqref{eq:alg-inner-2-q-update} and \eqref{eq:alg-inner-2-olz-update} of Algorithm \ref{alg:PD-No-Inversion}.

We conclude by comparing the alternative error measure \eqref{eq:norm-e-unconstrained}, which for the choice \eqref{eq:zp-z-unconstrained-0} translates to
\begin{equation}
\|e\| = \|z_{p}-z\|,
\end{equation}
with condition \eqref{eq:ek-summability} for summing up these inner-loop errors at the outer iteration level. Conversely, inserting the alternative choice \eqref{eq:zp-z-unconstrained-2} into \eqref{eq:inexactness-unconstrained-final-term} yields the error
\begin{equation}\label{eq:veps-unconstrained}
\veps \geq \alpha^{1/2} \|A(z-z_{p})\|
\end{equation}
in terms of vectors of smaller dimension $m$ (cf.~assumption \eqref{eq:ass-A}) and with a factor $\alpha^{1/2} \ll 1$ that typically is much smaller than $1$ due to the upper bound $\alpha < \frac{2}{L_{f}}$ of feasible proximal parameters. The sum of the errors \eqref{eq:veps-unconstrained} for all outer steps $k \in \N$ of the iteration not only have to be summable but should decay with rate \eqref{eq:ek-decay-rate} to achieve fast convergence.

\subsubsection{Termination: The nonnegativity  Constrained Case}\label{sec:Inner-Termination-Constrained}
Let $K=\R_{+}^{n}$. 
We examine criteria based on the conditions \eqref{eq:z-Palpha-constrained} for terminating the inner-loop iteration that generates a sequence $(z_{l})$ converging to the proximal point $\ol{y}=P_{\alpha} g(x)$. Algorithm 
\ref{alg:PD-Basic} does not comprise a projection onto $K$ and hence does not conform to condition \eqref{eq:z-Palpha-constrained-a}. Therefore, we only consider early termination of Algorithm  \ref{alg:PD-No-Inversion}.

Based on the projection theorem, \cite[Thm. 3.14]{Bauschke:2010aa} we rewrite step \eqref{eq:alg-inner-2-z-update} as
\begin{subequations}
\begin{align}
N_{K}(z_{l+1})
&\ni \frac{\alpha}{\alpha+\tau_{l}}\big(
z_{l}-\tau_{l}(A^{\T} q_{l+1} - c_{\alpha})\big) - z_{l+1}
\\
&\overset{\eqref{eq:def-B-c-alpha}}{=} 
\frac{1}{\alpha}\Big(x-z_{l+1}-\frac{\alpha}{\tau_{l}}(z_{l+1}-z_{l}) - \alpha A^{\T}(q_{l+1}-A z_{l+1})\Big) - A^{\T}(A z_{l+1}-b).
\end{align}
\end{subequations}
Setting
\begin{subequations}\label{eq:zp-z-Alg2-constrained}
\begin{align}
z_{p}=z_{l+1},\qquad
z &= z_{l+1}+\frac{\alpha}{\tau_{l}}(z_{l+1}-z_{l}) + \alpha A^{\T}(q_{l+1}-A z_{l+1}),
\label{eq:zp-z-Alg2-constrained-a} \\
w &= A^{\T}(A z_{l+1}-b)-\frac{1}{\alpha}(x-z)
\end{align}
\end{subequations}
satisfies condition \eqref{eq:z-Palpha-constrained} \textit{except} for the condition $z \in K$. Clearly, $z$ given by \eqref{eq:zp-z-Alg2-constrained} becomes nonnegative as $l \to \infty$, and once this happens for sufficiently large $l$ we can check $z \approxeq_{\veps}P_{\alpha}g(x)$ by evaluating condition \eqref{eq:z-Palpha-constrained-b}. Similarly to \eqref{eq:inexactness-unconstrained-final}, we get
\begin{subequations}\label{eq:inexactness-constrained-final}
\begin{align}
\frac{\veps^{2}}{2\alpha}
&\geq \frac{1}{2}\|A z-b\|^{2}-\frac{1}{2}\|A z_{p}-b\|^{2}-\la A z_{p}-b,A z-A z_{p}\ra + \la w,z \ra
\\
&= \frac{1}{2}\|A (z-z_{p})\|^{2} + \la w, z\ra,
\end{align}
\end{subequations}
which, in view of \eqref{eq:zp-z-Alg2-constrained}, evaluates inexactness of $z$ through a distance and a complementarity term with respect to $z_{p}$.

A weakness of criterion \eqref{eq:inexactness-constrained-final} is that it cannot be applied if $z \not\in K$, with $z$ given by \eqref{eq:zp-z-Alg2-constrained-a}. As an alternative, we evaluate the error measure \eqref{eq:norm-e-constrained} in order to terminate the inner iterative loop. The duality gap $\dgp(z_{l})$ \eqref{eq:dgp-z} requires to compute 
\begin{equation}\label{eq:calpha-Balpha-z}
c_{\alpha}-B_{\alpha} z_{l} = c_{\alpha}-A^{\T} A z_{l} - \frac{1}{\alpha} z_{l} 
\end{equation}
where $A^{\T} A z_{l}$ has to be computed anyway in order to execute Algorithm \ref{alg:PD-No-Inversion}. To avoid the expensive evaluation of the first term on the right-hand side of \eqref{eq:dgp-z}, we substitute \eqref{eq:def-dual-p} and estimate
\begin{equation}
\dgp(z_{l}) \leq \frac{1}{2}\|B_{\alpha}^{-1}\|_{2} \|(c_{\alpha}-B_{\alpha}z_{l})_{+}\|^{2}-\la (c_{\alpha}-B_{\alpha} z_{l})_{-},z_{l}\ra
\end{equation}
and, hence, obtain with $\|B_{\alpha}^{-1}\|_{2}\leq \alpha$ and \eqref{eq:norm-e-constrained}
\begin{equation}\label{eq:norm-e-constrained-estimate}
\|e_{l}\| \leq \alpha \Big(\|(c_{\alpha}-B_{\alpha}z_{l})_{+}\|^{2} - \frac{2}{\alpha}\la (c_{\alpha}-B_{\alpha} z_{l})_{-},z_{l}\ra\Big)^{1/2}.
\end{equation}
After fixing errors $\|e_{k}\|$ at each outer iteration $k$ so as to satisfy \eqref{eq:ek-summability}, each correspoding inner loop can be terminated once the right-hand side of \eqref{eq:norm-e-constrained-estimate} drops below $\|e_{k}\|$.

\subsection{Reverse Problem Splitting, Proximal Map}\label{sec:reverse-FB}
As motivated and discussed in Subsection \ref{sec:SM-CO-Preliminary}, we also evaluate the reverse splitting \eqref{eq:splitting-c-reversed}, including the corresponding unconstrained splitting \eqref{eq:splitting-u-reversed} as a special case.

The FBS iteration \eqref{eq:intro-FBS} now reads
\begin{equation}
x_{k+1} 
= P_{\alpha_{k}} g\big(x_{k}-\alpha_{k} \nabla f_{u}(x_{k})\big)
= P_{\alpha_{k}} g\big(x_{k}-\alpha_{k} A^\top(A x_{k}-b)\big),
\end{equation}
where either $g=g_{c}$ or $g=g_{u}$. We confine ourselves to using \textit{exact} evaluations of the proximal map $P_{\alpha_{k}}g$ using the highly accurate box-constrained L-BFGS method 
from \cite{LBFGS-B}. These iterations replace lines 4--7 of Algorithm \ref{alg:FBS}, that otherwise is applied without change.

A major consequence of using this reversed splitting scheme is that now the target function $R_{\tau}$ defines the proximal map, whereas the least-squares problem defines the task
whose algorithm will be perturbed by the target function reductions. As a consequence, more iterations are spent to lower the target function as inner loop iterations of the proximal mapping evaluation, compared to the number of gradient iterations devoted to solving the unconstrained least-squares problem. Our empirical findings are reported and discussed, 
based on our experimental numerical work, in Section \ref{sec:NumericalResults}.

\section{Numerical Results}\label{sec:NumericalResults}

Subsection \ref{sec:Data} details the experimental set-up based that we use to compare superiorization with accelerated inexact optimization in Subsections \ref{sec:Error-Values} and \ref{sec:Complexity}.

\subsection{Data, Implementation}\label{sec:Data}
The linear system 
\begin{equation}
A x = b,\qquad A \in \R^{m \times n}
\end{equation}
considered for our experiments uses  linear tomographic measurements $b$ of the Shepp-Logan MATLAB $128\times 128$ phantom shown in Figure \ref{fig:data} in a highly 
underdetermined scenario, i.e., $m/n = 0.15625$. The vector representing this test phantom is denoted by $x_\ast$ in the sequel. The subsampling ratio $m/n$
is chosen according to  \cite{Kuske2019} so that recovery error $\|x-x_\ast\|$ with $x$ obtained as minimizer of \eqref{eq:hu-hc}  is small as illustrated in Figure \ref{fig:minimizers-unconstrained}.
We use the MATLAB routine  \texttt{paralleltomo.m} from the AIR Tools II package \cite{AirToolsII} that implements 
such a tomographic matrix for a given vector of projection angles. In particular, we used $20$ angles (projections) uniformly spaced between $1$ and $180$ degrees, 
and $120$ parallel rays per angle of projection, resulting in a full rank
$2560\times16384$ matrix $A$. 
We consider both exact and noisy measurements. In the latter case, i.i.d.~normally distributed noise was added with variance $\sigma^{2}$ determined by
\begin{equation}\label{eq:def-noisy-b}
b_{j} \quad\leftarrow\quad b_{j} + \xi_{j},\qquad
\xi_{j} \sim \mc{N}(0,\sigma^{2}),\qquad
j \in [m],\qquad
\sigma = \frac{0.02}{m} \sum_{j \in [m]} b_{j}.
\end{equation}

Figure \ref{fig:data} depicts in addition to the original image $x_\ast$ and its histogram the least-squares solution of minimal norm $x_{\rm{LS}}$
for noisy measurements along with its histogram.

Figure \ref{fig:minimizers-unconstrained} depicts minimizers $x$ of both the unconstrained and the nonnegativity constrained problem \eqref{eq:hu-hc}  for exact and for noisy data, respectively, and illustrates that the models in \eqref{eq:hu-hc} are appropriate and lead to a relatively small recovery error $\|x-x_\ast\|$.
The histograms show, in particular, that adding nonnegativity constraints increases accuracy.

\vspace{2mm}
\noindent
\textit{Smoothing Parameter.} The smoothing parameter of the target function $R_\tau$ \eqref{eq:R-tau} and of the regularization term in \eqref{eq:hu-hc} was set to $\tau = 0.01$  in all experiments. 

\vspace{2mm}
\noindent
\textit{Regularization Parameter.} 
The regularization parameter $\lambda$ in \eqref{eq:hu-hc} was set to
\begin{equation}\label{eq:lambda-values}
\lambda := \begin{cases}
0.01, &\text{for exact data} \\
1.6529, &\text{for noisy data}
\end{cases}
\qquad\implies\qquad
\left\{
\begin{aligned}
R_{\tau}(x) &\approx R_{\tau}(x_{\ast}) \\
\|A x-b\|^{2} &\approx \|A x_{\ast}-b\|^{2}
\end{aligned}
\right.
\end{equation}
which implies,
\begin{itemize}
\item
in the case of exact data $b$, that minimizers $x$ have almost the same total variation as the ``unknown'' true solution $x_{\ast}$,
\item
in the case of inexact data $b$, that minimizers $x$ reach the noise level of $x_{\ast}$ induced by \eqref{eq:def-noisy-b}.
\end{itemize}

\vspace{2mm}
\noindent
\textit{Least-Squares Regularization Parameter.} 
The parameter $\mu$ in the regularized least-squares objective \eqref{eq:def-gu-mu} was set to $\mu=1/y_\ast$
according to Lemma \ref{lem:choice_of_mu}.
The dual variable $y_\ast$, see the proof of Lemma \ref{lem:choice_of_mu}, is computed via CVX\footnote{
{CVX}: Matlab Software for Disciplined Convex Programming, \url{http://cvxr.com/cvx}.}.
This parameter choice  guarantees that the CG iteration converges to a solution of \eqref{eq:def-gu}.

\vspace{2mm}
\noindent
\textit{Starting vectors.} 
The zero vector $0\in\R^n$ was chosen as starting vectors $x_0,y_0\in\R^n$ in all variants of  Algorithm \ref{alg:SA}
and Algorithm  \ref{alg:FBS}.

\vspace{2mm}
\noindent
\textit{Termination Criteria.} 
For illustration purposes all algorithms where run for a maximum number of (outer) iterations equal to 2000
\emph{beyond} the used stopping rules of Algorithm \ref{alg:SA} and Algorithm \ref{alg:FBS}.
Regarding termination of the superiorized versions of the basic algorithms, recall the proximity sets in \eqref{def:Gamma-sets}, we used
\begin{equation}\label{eq:termination-unconstrained-sup}
g_u(x)\leq \veps
\end{equation}
to terminate the iterative process that concerns the unconstrained least-squares problem, while using
\begin{equation}\label{eq:termination-constrained_sup}
g_u(x) \leq \veps  \quad \text{and} \quad \min_{i \in [n]} x_i > - 10^{-8}
\end{equation}
in the case of the constrained least-squares problem. The parameter $\veps$ was set to the noise level $\veps\approx 0.047 m$ in the noisy case and 
$\veps = 0.001$ in the noise free case.

Regarding the optimization problem \eqref{eq:hu-hc}, 
in the  the unconstrained case  $x \in \R^{n}$, the criterion
\begin{equation}\label{eq:termination-unconstrained}
\|\nabla h_u(x)\|_{\infty} \leq 0.001
\end{equation}
was used to terminate the iterative optimization  and to accept $x$ as approximate minimizer of $h_u$ \eqref{eq:hu-hc}. In the nonnegativity constrained case $x \in \R_{+}^{n}$, the complementarity condition
\begin{equation}\label{eq:termination-constrained}
\big|\min(x,\nabla h_u(x))\big|\leq 0.001 \footnotemark 
\end{equation}
\footnotetext{The optimality condition for the nonnegativity constrained objective $h_c$ \eqref{eq:hu-hc} can be compactly written as $\min(x,\nabla h_u(x))=0 \Leftrightarrow x\ge 0, \nabla h_u(x)\ge 0, x^\top \nabla h_u(x)=0$, where $h_u$ denotes the function without constraints.  }
was used  to accept $x$ as approximate minimizer of $h_c$ \eqref{eq:hu-hc}.

\begin{figure}
   \centering
\begin{tabular}{ccc}
\includegraphics[height=0.2\textwidth]{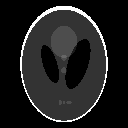}& \includegraphics[height=0.2\textwidth]{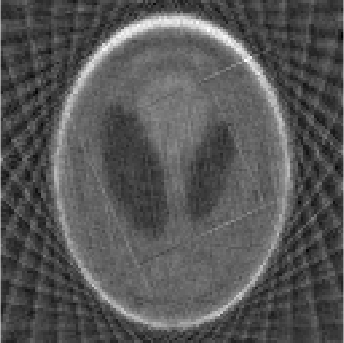} & \includegraphics[height=0.2\textwidth]{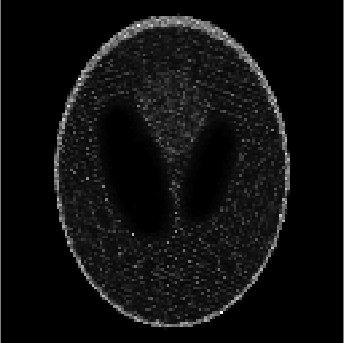}\\
\includegraphics[height=0.2\textwidth]{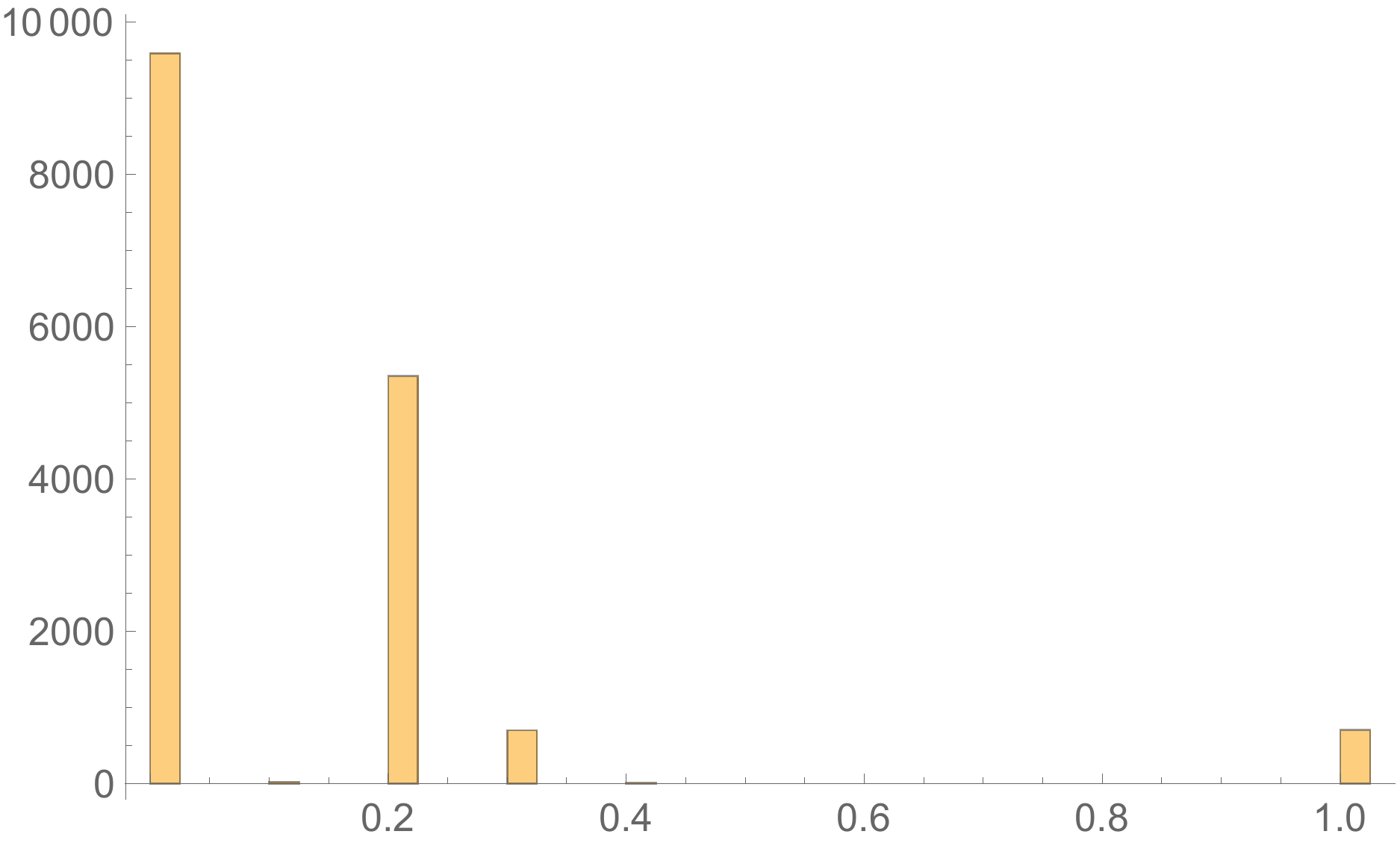} &  \includegraphics[height=0.2\textwidth]{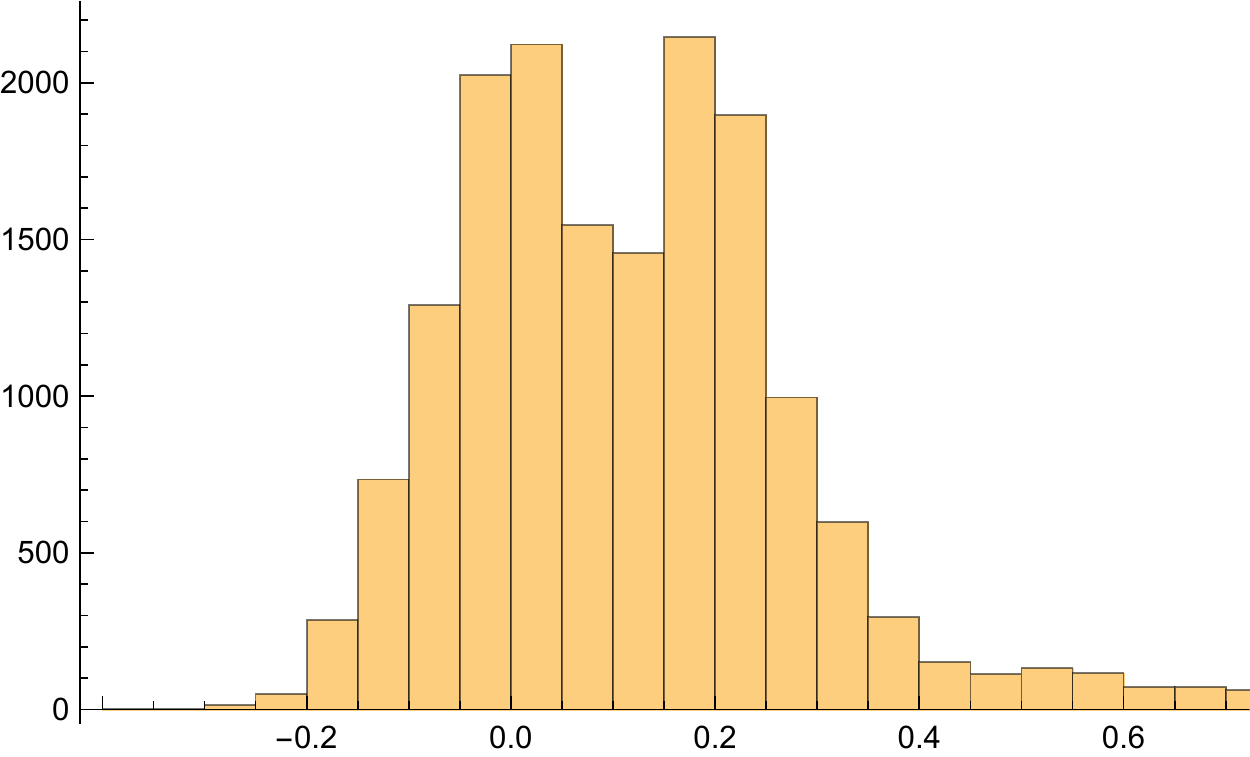} &\includegraphics[height=0.2\textwidth]{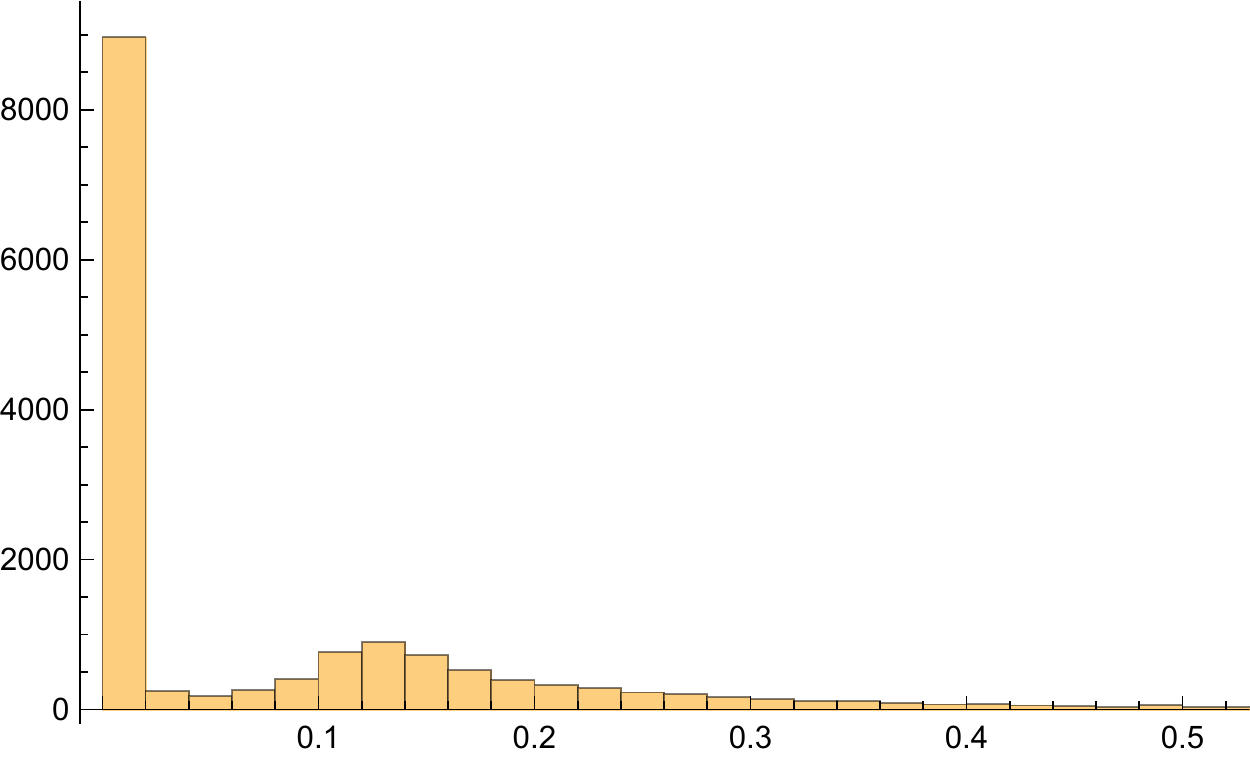}
\end{tabular}
\caption{\textsc{Top left:} The original image $x_{\ast}$ of size $n = 128^2$ to be recovered from $m = 0.15625 n$ linear measurements. \textsc{Bottom left:}  The histogram of $x_{\ast}$
depicts the number of pixels for each intensity value. All components of $x_{\ast,i},\,i\in[n]$ are confined to the interval $[0,1]$.  \textsc{Top middle and right:} The least-squares solution of minimal Euclidean norm $x_{\rm{LS}}$
(middle) and a nonnegative least-squares solution (right)
both include severe artefacts. \textsc{Bottom middle and right:} The histogram of  $x_{\rm{LS}}$ and the histogram of the nonnegative least-squares solution deviate significantly from that of $x_\ast$. This holds for both exact and noisy measurements. The reconstruction problem requires further regularization.
}\label{fig:data}
\end{figure}
\begin{figure}
\centering
\begin{tabular}{ccc}
\includegraphics[height=0.2\textwidth]{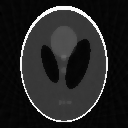}
&
\includegraphics[height=0.2\textwidth]{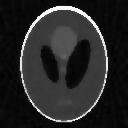}
&
\includegraphics[height=0.2\textwidth]{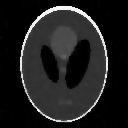}
\\
\includegraphics[width=0.3\textwidth]{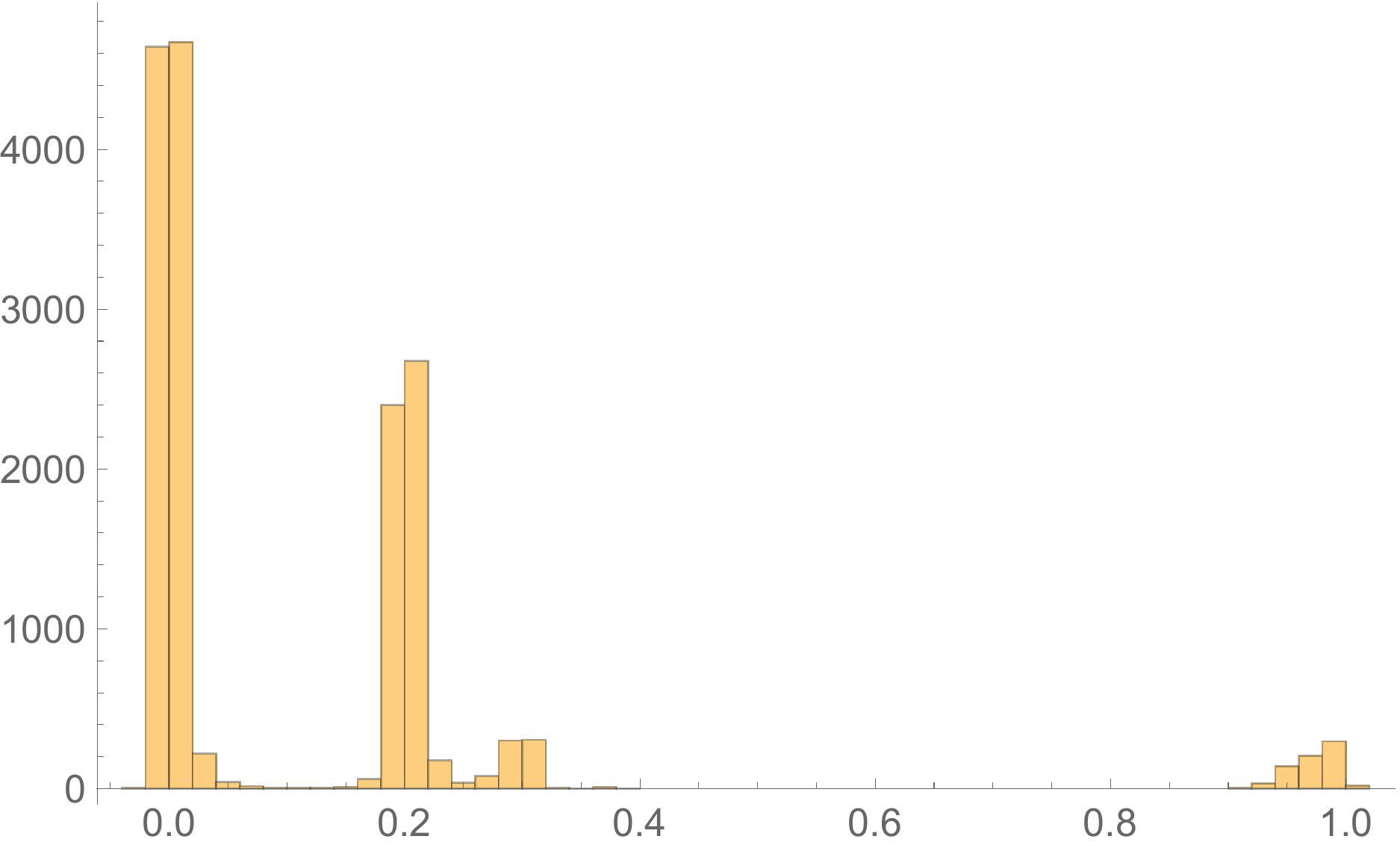}
&
\includegraphics[width=0.3\textwidth]{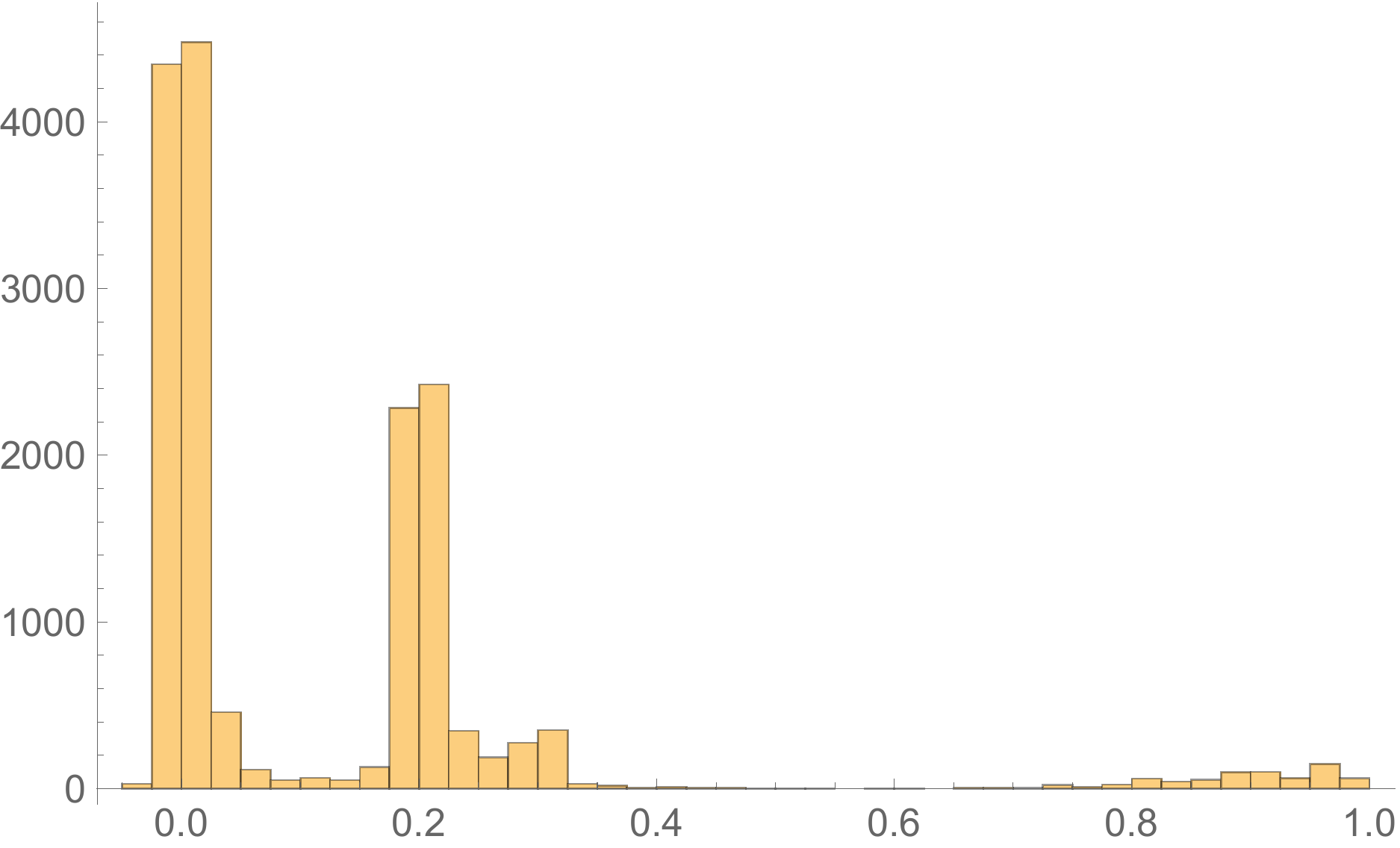}
&
\includegraphics[width=0.3\textwidth]{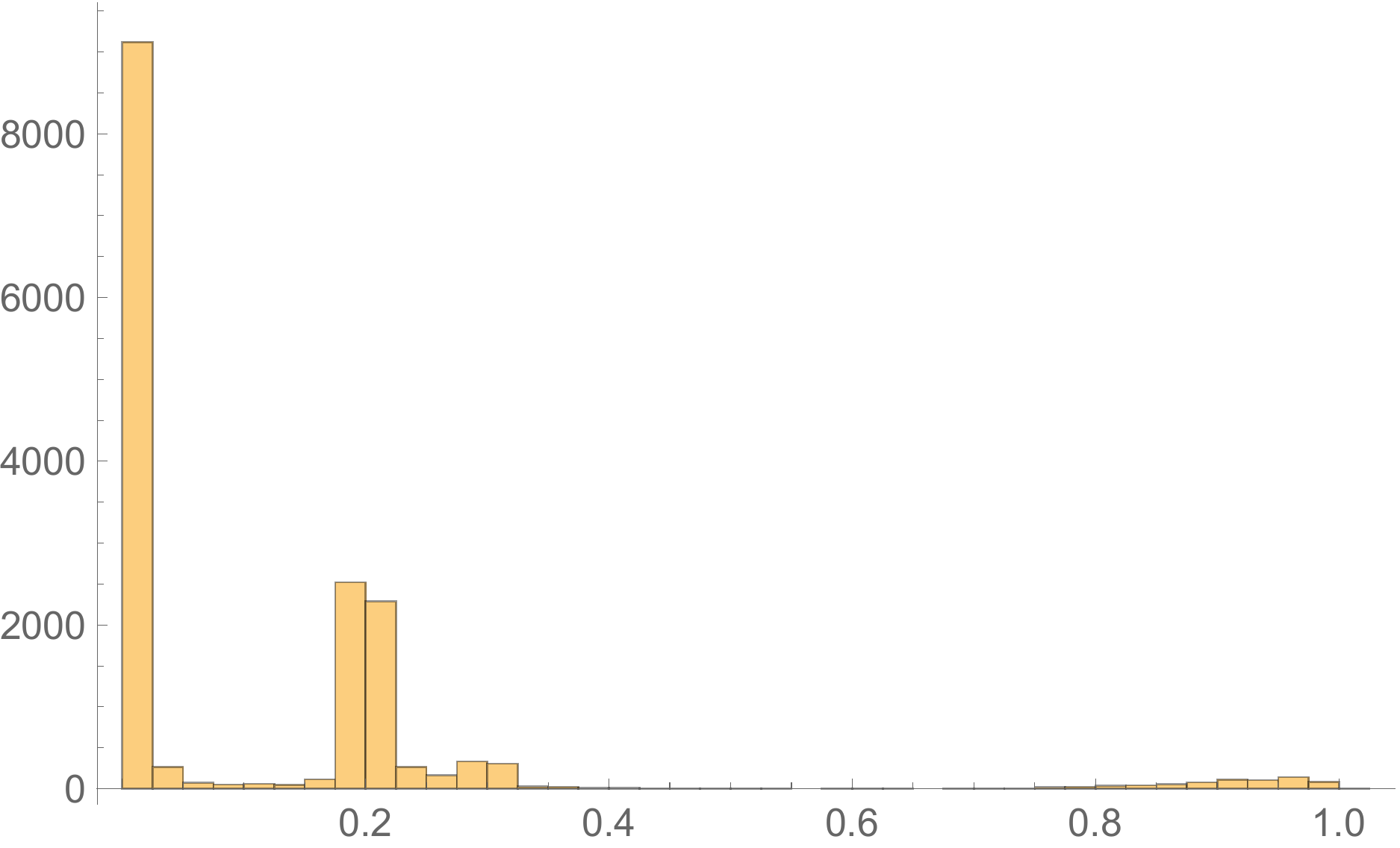}
\\
\includegraphics[width=0.3\textwidth]{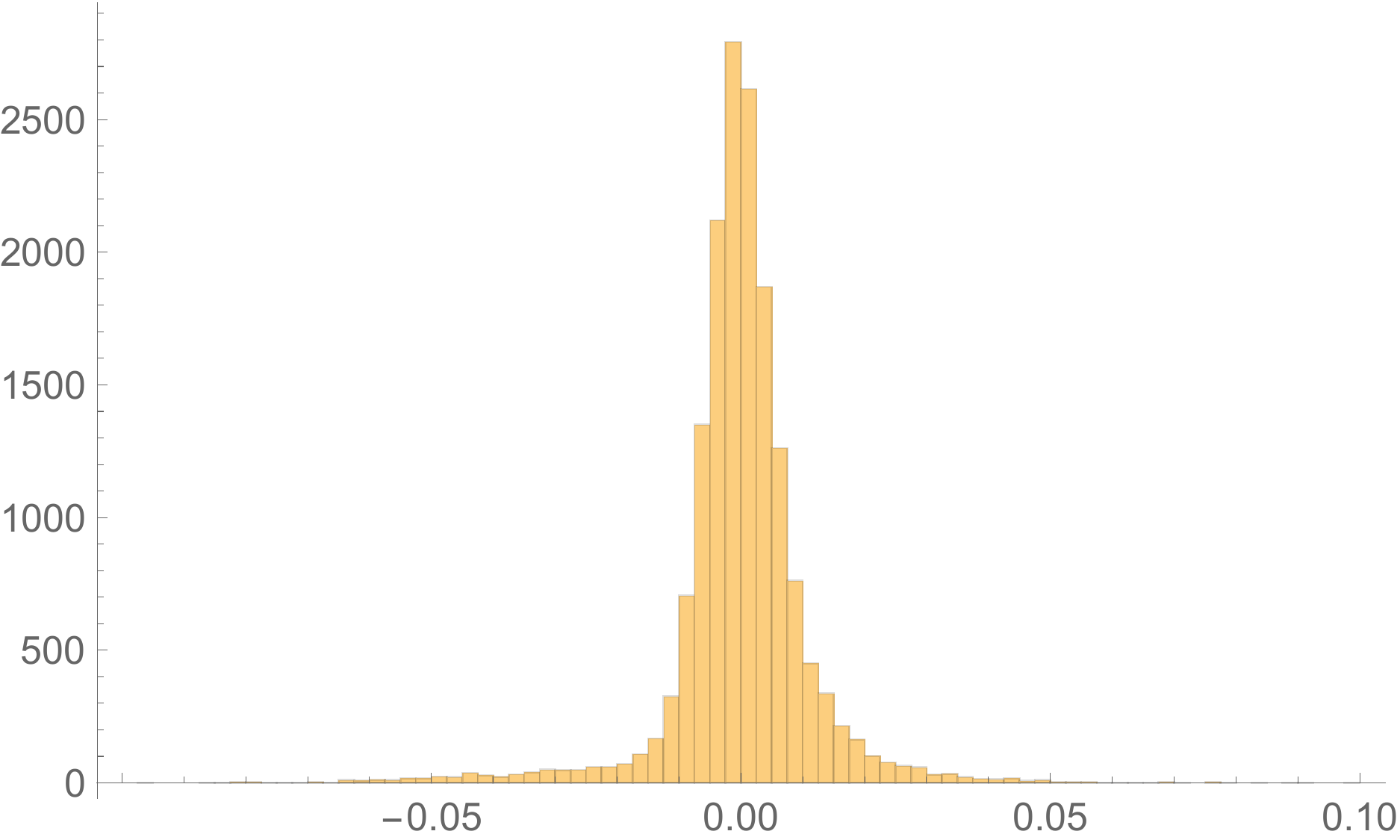}
&
\includegraphics[width=0.3\textwidth]{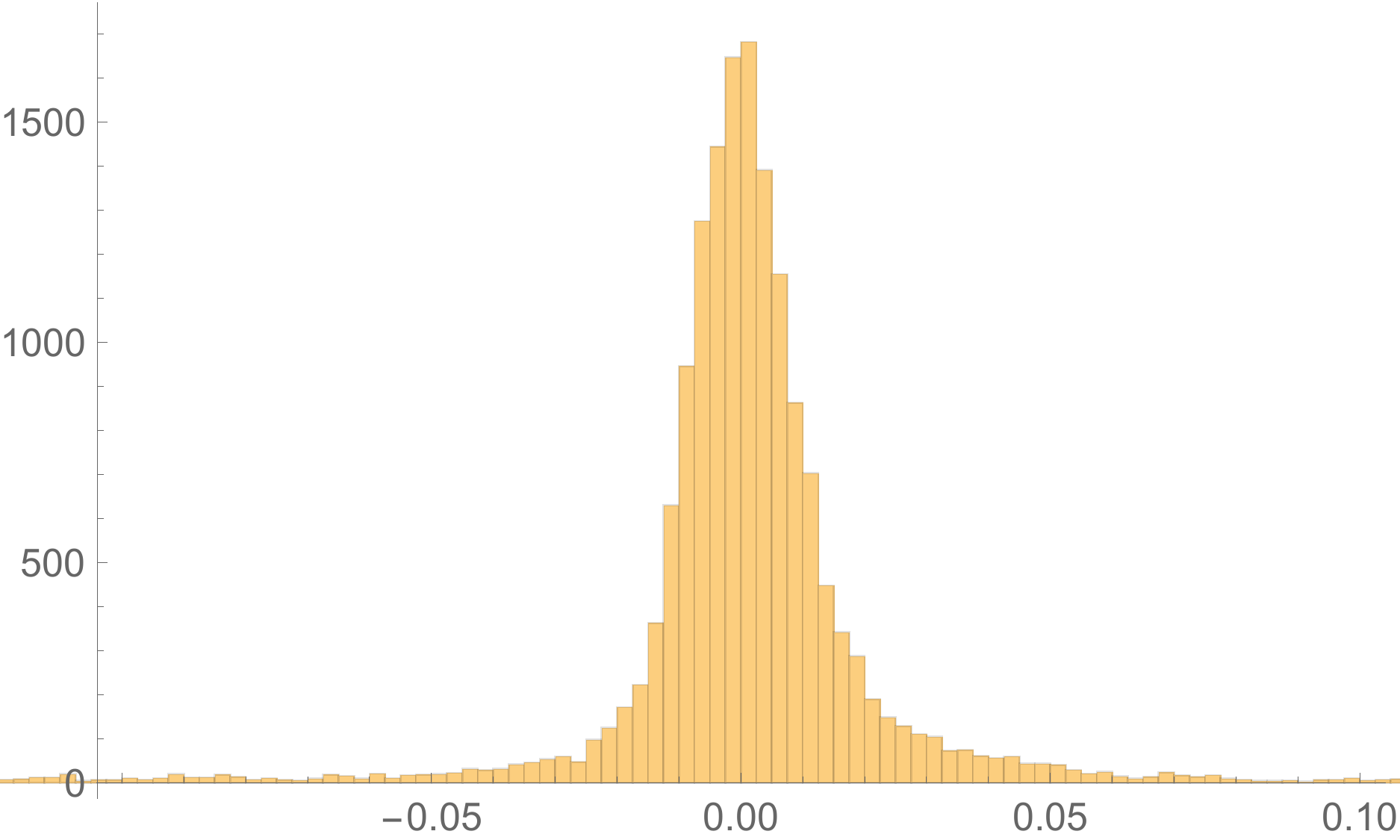}
&
\includegraphics[width=0.3\textwidth]{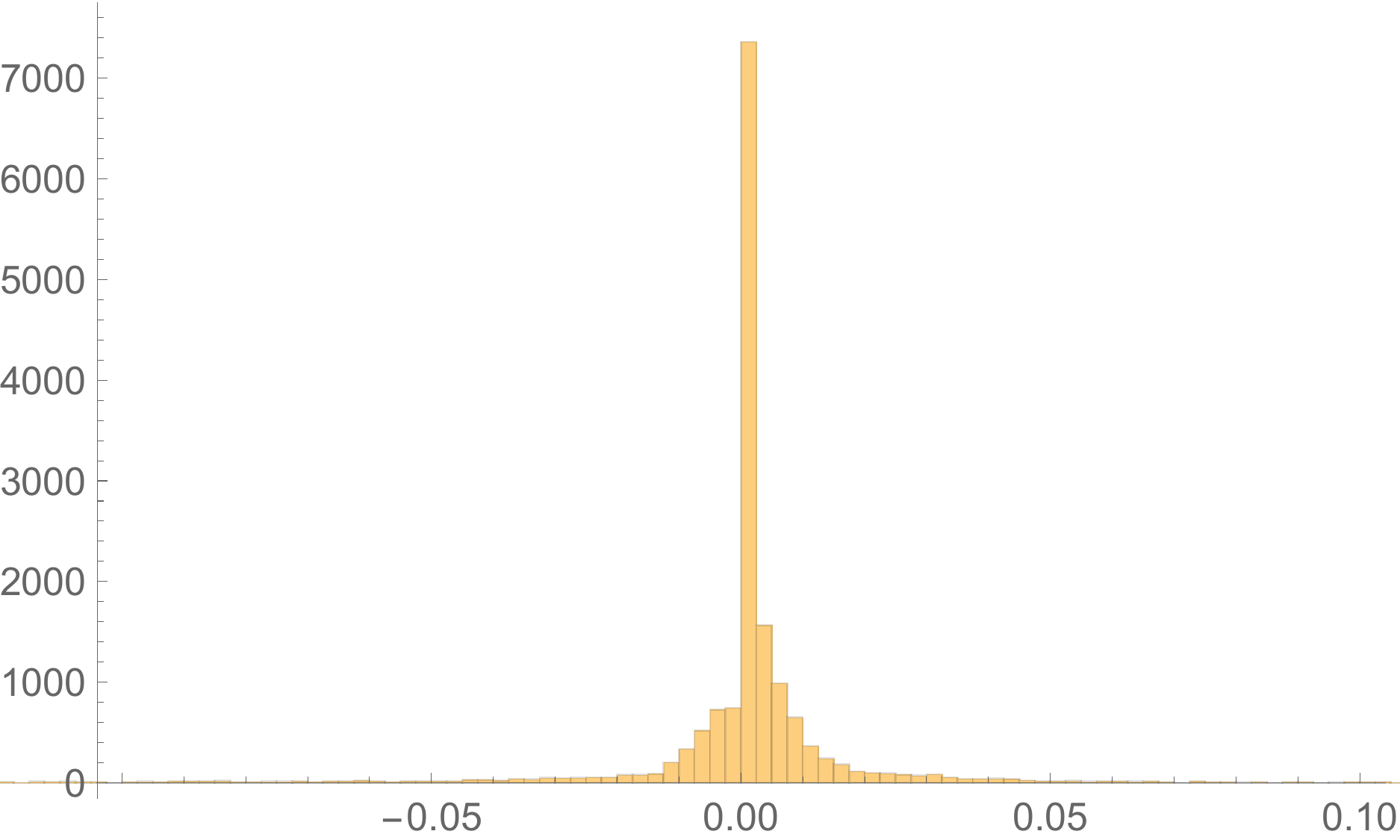}
\end{tabular}
\caption{
\textsc{Top row, from left to right:} Minimizers $x$ of the unconstrained objective function $h_u$ \eqref{eq:hu-hc} for exact data, for noisy data, and minimizer for the nonegatively constrained objective function $h_c$ and noisy data. \textsc{Center row:} The histograms corresponding to the top row. In the unconstrained cases, a number of components $x_{i}$ are outside the range $[0,1]$ of $x_{\ast,i},\,i \in [n]$. \textsc{Bottom row:} The difference histograms of $x-x_{\ast}$ corresponding to the top row. The left histogram (unconstrained case, exact data) peaks more sharply around $0$ than the histogram in the center (unconstrained case, noisy data), 
while the right histogram has the sharpest peak (nonnegativity constrained case, noisy data). 
}
\label{fig:minimizers-unconstrained}
\end{figure}

\subsection{Superiorization vs.~Optimization: Reconstruction Error  Values}\label{sec:Error-Values}

In this subsection we compare the algorithmic performance in terms of reconstruction quality
based on three reconstruction errors:
\begin{enumerate}
\item The \emph{scaled squared residual} $\|Ay_k-b\|^2/(2m)$ generated by variants of Algorithm \ref{alg:SA} and $\|Ax_k-b\|^2/(2m)$ generated by variants of Algorithm \ref{alg:FBS}  that should  approach $0$ in the noiseless case
and ideally approach the noise level $\approx 0.0473$ in the noisy case;
\item The \emph{scaled target function} (regularization  term)  $R_\tau(y_k)/n$ generated by  the superiorized versions of the basic algorithms and $R_\tau(x_k)/n$ generated by each optimization algorithm that should approach the smoothed TV value of the original image $x_\ast$;
\item The \emph{scaled error term} $\|y_k-x_\ast\|^2/n$ computed by Algorithms of type \ref{alg:SA} and $\|x_k-x_\ast\|^2/n$ computed by Algorithms of type \ref{alg:FBS}, that should be reduced as much as possible.
\end{enumerate}
Each optimization algorithm is guaranteed to reduce the combination of (1) and (2) as in \eqref{eq:hu-hc}, and also (3), as our model
 \eqref{eq:hu-hc} is appropriate, see Figure \ref{fig:minimizers-unconstrained}, while the superiorized versions of the basic algorithms
  are only guaranteed to reduce the error in (1). \\
\textbf{Superiorization.} 
We first address  the unconstrained least-squares problem and consider the superiorized CG algorithm and the superiorized Landweber algorithm, by gradient- or proximal point based target function reduction procedures $S_\nabla$ and $S_{\rm{prox}}$ respectively. To handle the underdetermined system by CG we consider
the regularized least-squares problem \eqref{eq:reg-least-squares} and an appropriate choice of $\mu$ as previously described.

The nonnegative least-squares problem is addressed by superiorizing the projected Landweber algorithm. To additionally steer the iterates of a basic algorithm
for the  least-squares problem towards the nonnegative orthant we employ perturbations by constrained proximal points with $S_{\rm{prox+}}$ .

The algorithms, listed in Table \ref{tab:list-of-algorithms}, have parameters that need to be tuned.
To select these parameters, we evaluated the quality of the reconstructed image   $y_{k}$ using the relative error
$
\|y_k-x_{\ast}\|^2/n
$
and then choose, for each algorithm separately,
the parameters that provided the best reconstructed image within
the first 100 iterations.
The superiorization strategies share three parameters ($\kappa$, $a$ and 
$\gamma_0$), see, e.g., Algorithm \ref{eq:superiorized-grad-CG} and Algorithm \ref{eq:superiorized-prox-CG}. 
These were chosen from the following sets:
$\kappa \in\{5,10,20\}$, $a\in\{0.5,1-10^{-2}, 1-10^{-4}, 1-10^{-6}\}$,
$\gamma_0\in\{0.01,0.001,0.0025,\beta_1\}$, with $\beta_1 =1.9\lambda/ \|A\|^2$ and $\lambda$ from \eqref{eq:lambda-values}.
The combination of these parameters, that provide the best results are listed in Table \ref{tab:optimal-param}, while
the results are shown in Figure \ref{fig:values-superiorization-NC} and Figure \ref{fig:values-superiorization-WC}.
For comparison we also plot the results of the FBS algorithm \eqref{eq:intro-FBS} for the reversed splitting \eqref{eq:splitting-c-reversed}, in view of its 
intimate connection with the proximal-point based superiorized version of the Landweber algorithm, Algorithm \ref{eq:superiorized-prox-plus-LW}, discussed in
Proposition \ref{prop:rel-LW-PG}. Note that the best found values for  parameter $a$ are very close to $1$ as Proposition \ref{prop:rel-LW-PG} suggests.
Note that \texttt{ProxSupCG}
has the best error values among all superiorized versions of the basic algorithms and that it approaches optimal values  in the least amount of
time when it is adapted to incorporate nonnegativity constraints, by \texttt{ProxCSupCG}.
\begin{table}[h!]
\begin{center}
 \begin{tabular}{||c | c ||} 
 \hline
Algorithm & Optimal parameters \\ [0.5ex] 
 \hline\hline
 \texttt{GradSupCG} & $(a,\gamma_0,\kappa)= (1-10^{-4},0.001,20)$ \\ 
 \hline
\texttt{ProxSupCG}, \texttt{ProxSubLW} &  $(\gamma_0,a) = (0.001,1-10^{-6})$ \\
 \hline
\texttt{ProxCSupCG}, \texttt{ProxCSubLW}, \texttt{ProxSupProjLW} & $(\gamma_0,a) = \frac{1.9\lambda}{\|A\|^2},1-10^{-6})$ \\ 
\hline
 \texttt{GradSubLW}, \texttt{GradSupProjLW}  & $(a,\gamma_0,\kappa)= (1-10^{-4},0.0025,20)$ \\[1ex] 
 \hline
\end{tabular}
\end{center}
\caption{Optimal parameters for the superiorized versions of the basic algorithms.}
\label{tab:optimal-param}
\end{table}

\begin{figure}
\centerline{
\includegraphics[width=0.45\textwidth]{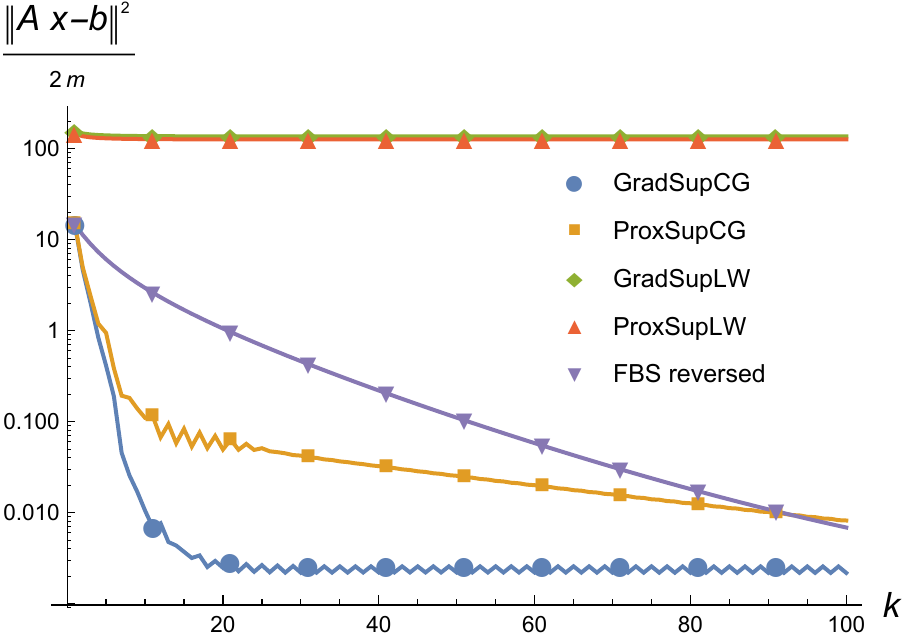}
\hfill
\includegraphics[width=0.45\textwidth]{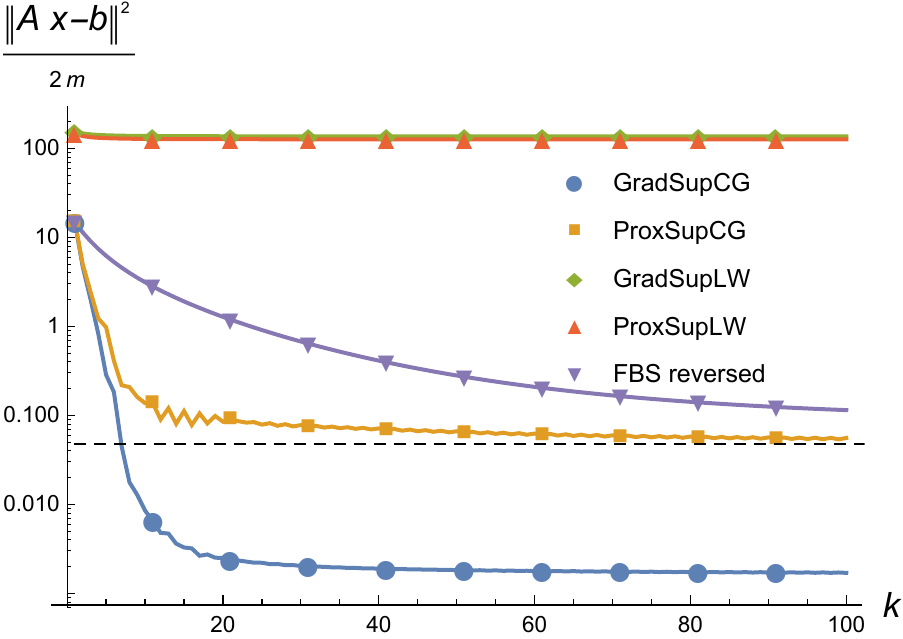}
}
\centerline{
\includegraphics[width=0.45\textwidth]{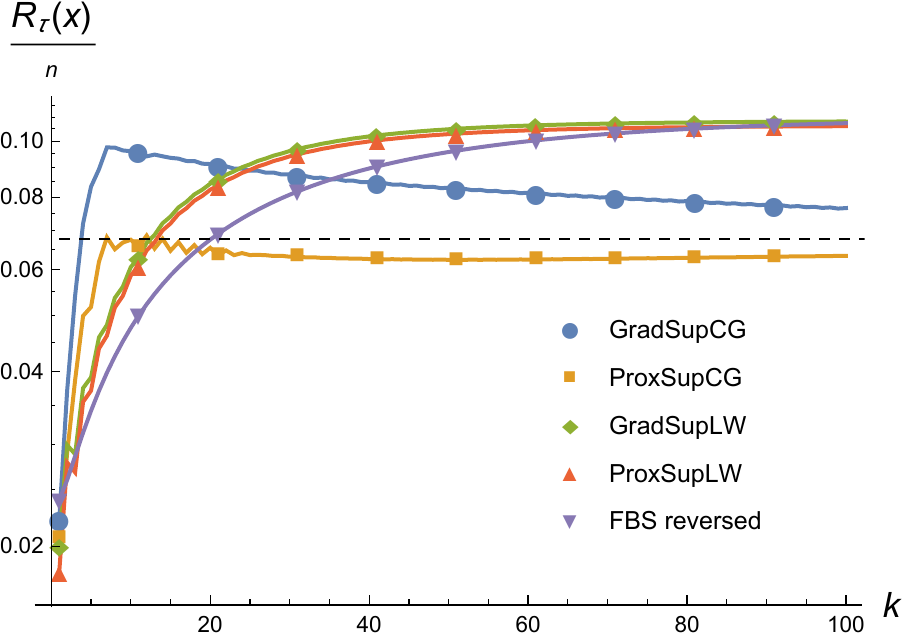}
\hfill
\includegraphics[width=0.45\textwidth]{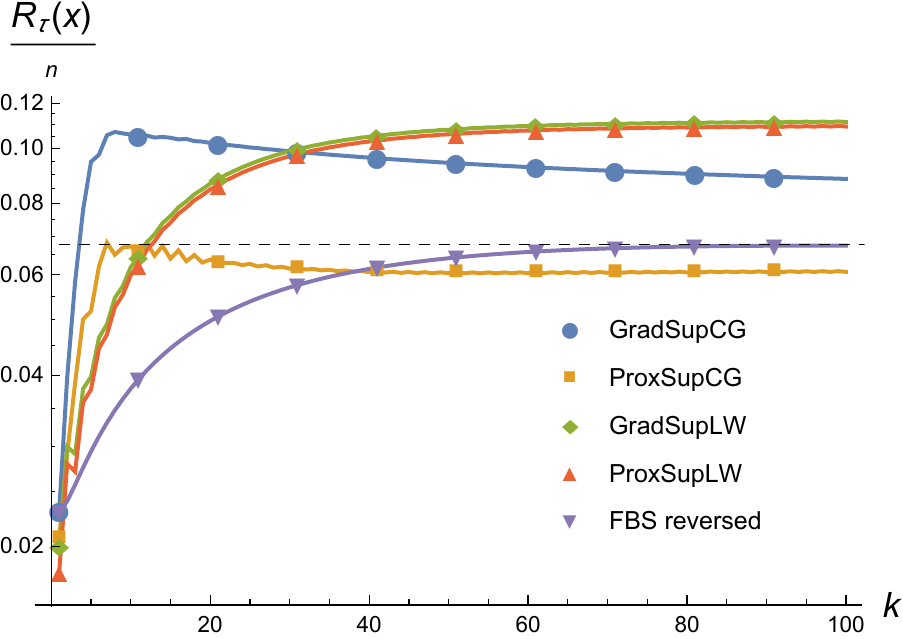}
}
\centerline{
\includegraphics[width=0.45\textwidth]{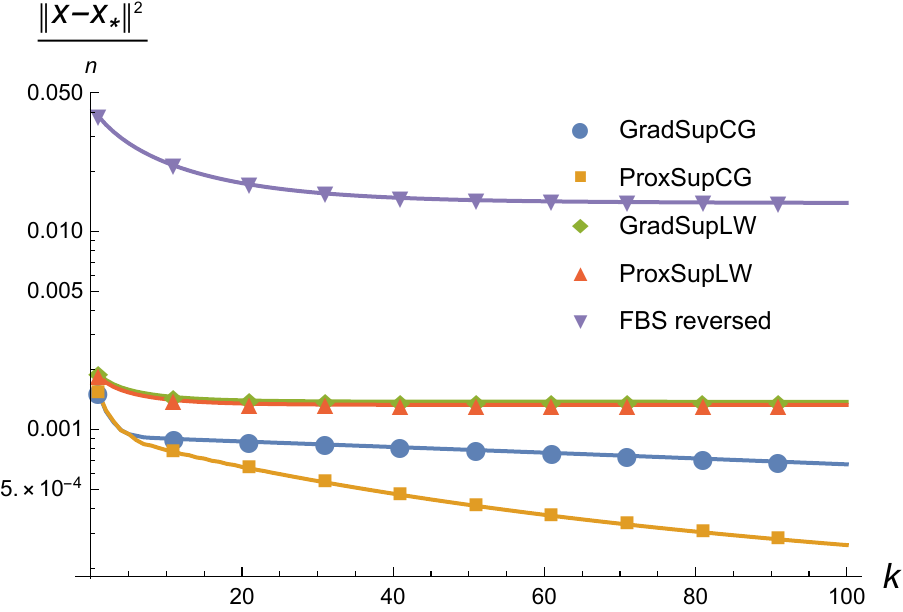}
\hfill
\includegraphics[width=0.45\textwidth]{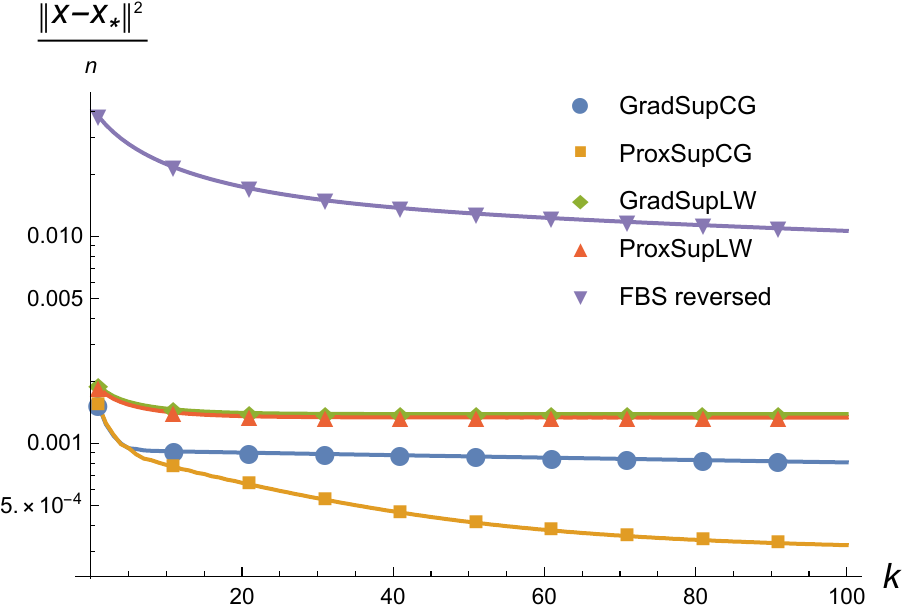}
}
\caption{\emph{Superiorization: Unconstrained Least-Squares.}
Scaled values of the data term (top row) and the regularizer (middle row) of the objective \eqref{eq:hu-hc} and the squared error norm (bottom row), as a function of  the iteration index $k$ of the superiorized versions of the  CG algorithm using perturbations based on gradients, see Algorithm  \ref{alg:sup-classic} and on proximal points, see \eqref{alg:sup-prox}, the negative gradients and proximal-point based superiorized versions of the Landweber algorithm and the FBS algorithm \eqref{eq:intro-FBS} for the reversed splitting \eqref{eq:splitting-c-reversed}.
The horizontal dashed lines indicate the corresponding values for $x_{\ast}$. The left and right columns correspond to exact and noisy data, respectively. Neither the basic algorithms nor their superiorized versions  incorporate nonnegativity constraints. After appropriate parameter tuning the superiorized CG iteration approaches the solution of \eqref{eq:hu-hc} significantly faster than the FBS algorithm. The superiorized version of the Landweber algorithm  makes slow progress due to very small step-sizes in view of the large value of $\|A\|^2$.
}
\label{fig:values-superiorization-NC}
\end{figure}


\begin{figure}
\centerline{
\includegraphics[width=0.45\textwidth]{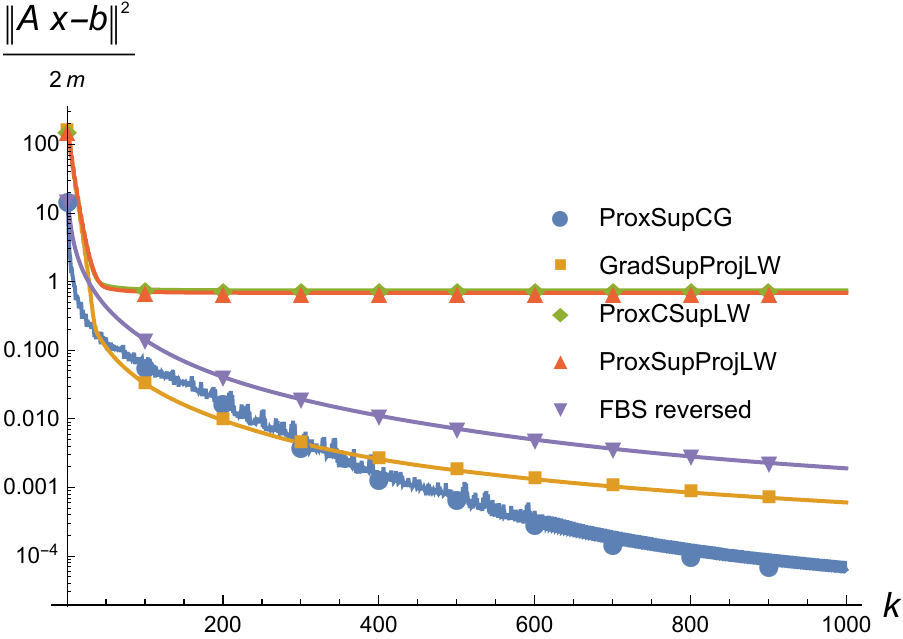}
\hfill
\includegraphics[width=0.45\textwidth]{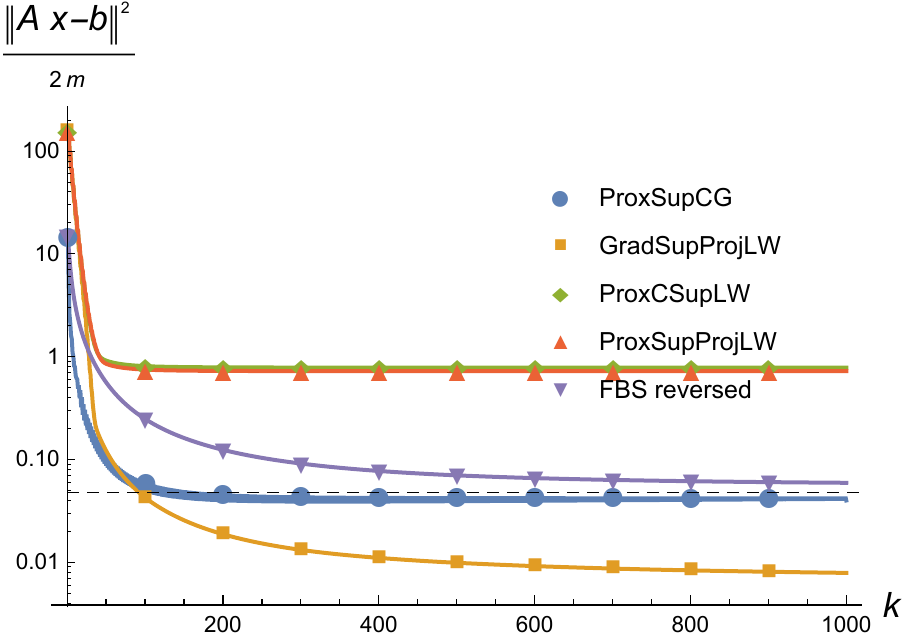}
}
\centerline{
\includegraphics[width=0.45\textwidth]{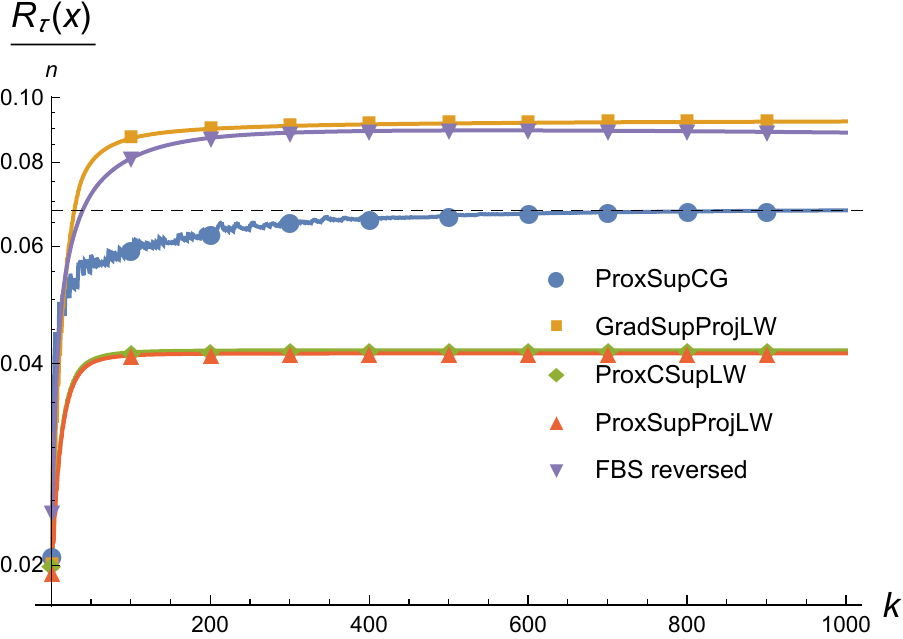}
\hfill
\includegraphics[width=0.45\textwidth]{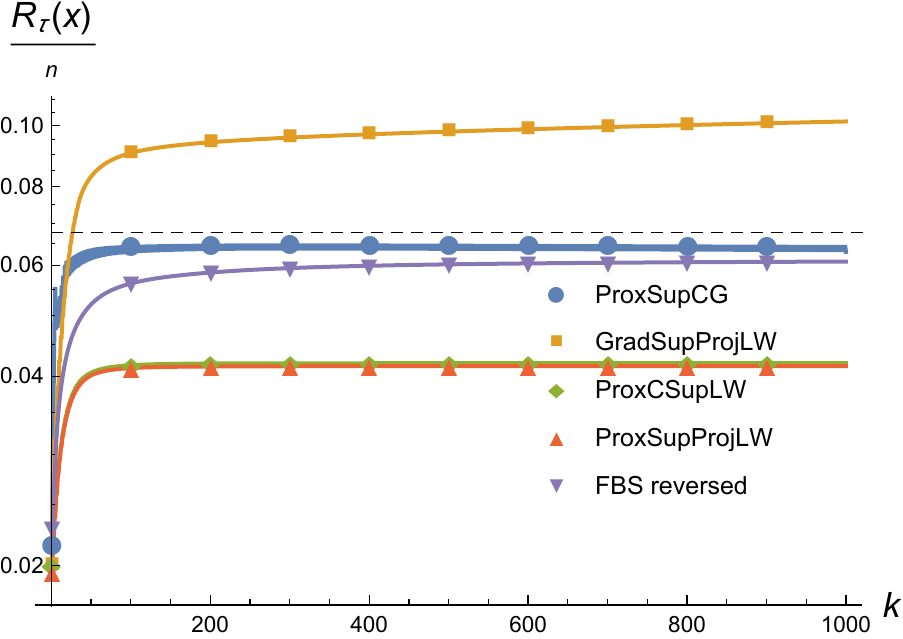}
}
\centerline{
\includegraphics[width=0.45\textwidth]{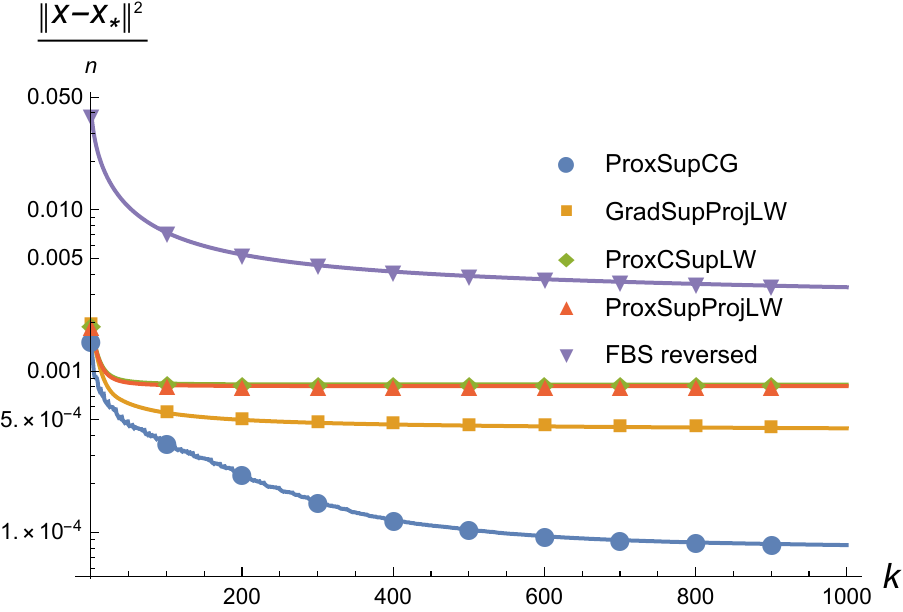}
\hfill
\includegraphics[width=0.45\textwidth]{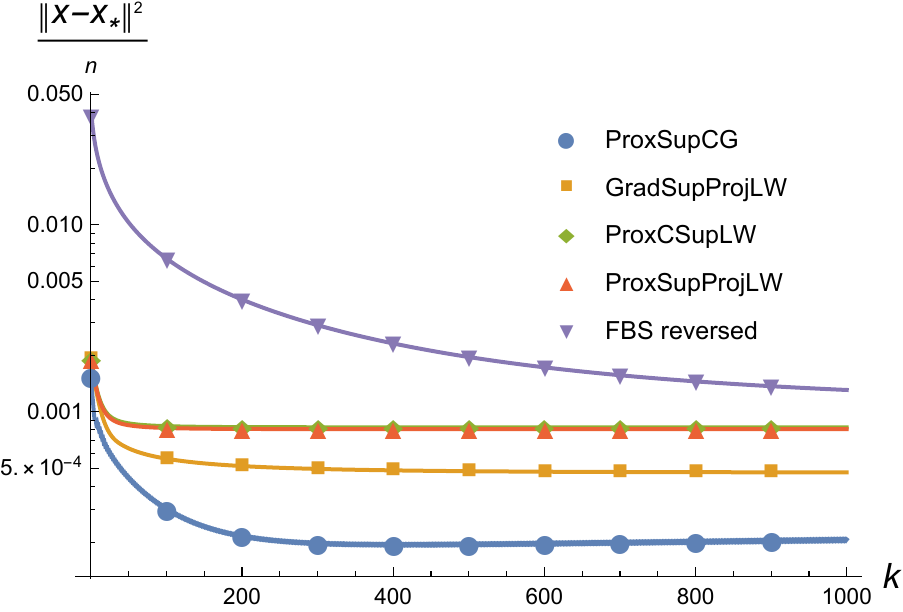}
}
\caption{\emph{Superiorization: Adding Nonnegative Constraints.}
Scaled values of the data term (top row) and the regularizer (middle row) of the objective \eqref{eq:hu-hc} and the squared error norm (bottom row), as a function of  the 
iteration index $k$ of the superiorized versions of the CG algorithm \ref{eq:superiorized-prox-CG} and the Landweber algorithm \ref{eq:superiorized-prox-plus-LW} using  
proximal points, 
see \eqref{alg:sup-prox-plus}, the negative gradient and proximal point based superiorized version of the  projected Landweber algorithm and the FBS algorithm 
\eqref{eq:intro-FBS} for the reversed splitting \eqref{eq:splitting-c-reversed}. The horizontal  dashed lines  indicate the corresponding values for $x_{\ast}$. The left and right columns correspond to exact and noisy data, respectively. Constraints are incorporated either in the basic algorithm  (\texttt{GradSupProjLW, ProxSupProjLW}) or via the superiorization strategy  (\texttt{ProxCSupCG, ProxCSubLW}) by constraining proximal points.   Superiorized versions of the Landweber algorithm perform similarly making slow progress towards $x_\ast$ due to the ill-conditioned matrix $A$, with a surprising better performance of the gradient based superiorized versions of the projected Landweber iteration \texttt{GradSupProjLW}. Again, the superiorized version of the 
CG algorithm approaches the solution of \eqref{eq:hu-hc} significantly faster than the FBS algorithm for the reversed splitting \eqref{eq:splitting-c-reversed} after appropriate parameter tuning.
}
\label{fig:values-superiorization-WC}
\end{figure}


\textbf{Optimization.}
Figure \ref{fig:values-optimization} shows the results of comparing the  FBS algorithm \eqref{eq:intro-FBS} 
with the accelerated FBS iteration  in Algorithm \ref{alg:FBS}
for solving \eqref{eq:hu-hc} without nonnegativity constraints. To obtain a fair comparison, the proximal maps were evaluated \textit{exactly} using formula \eqref{eq:Balpha-Woodbury}.  Depending on the value of $\lambda$ that affects the Lipschitz constant, 
i.e., a small value of $\lambda$ for exact data and a larger value  of $\lambda$ for noisy data, more iterations are required in the latter case. And acceleration pays: significantly fewer iterations are then required. 

Since the values of $\lambda$ were chosen such that minimizers match the properties \eqref{eq:lambda-values} of $x_{\ast}$, it seems fair to claim that -- from the viewpoint of optimization -- our implementation  structurally resembles the problem decomposition of superiorization, i.e.,~least-squares minimization adjusted by gradients of the target function $R_{\tau}$. Inspecting the plots of Figure  \ref{fig:values-optimization}, we notice that the final values are almost reached after merely $\approx 75$ outer iterations. 

Similarly, the accelerated FBS algorithm \eqref{eq:intro-FBS} for the reversed splitting \eqref{eq:splitting-c-reversed} reaches error term values close to final values within the
first $100$ iterations, at significantly lower cost as discussed next. Results for the reversed splitting are shown in Figure~\ref{fig:values-optimization-SPG-NC} and Figure~\ref{fig:values-optimization-SPG-WC} for the unconstrained and  nonnegativity constrained case, respectively.

\begin{figure}
\centerline{
\includegraphics[width=0.32\textwidth]{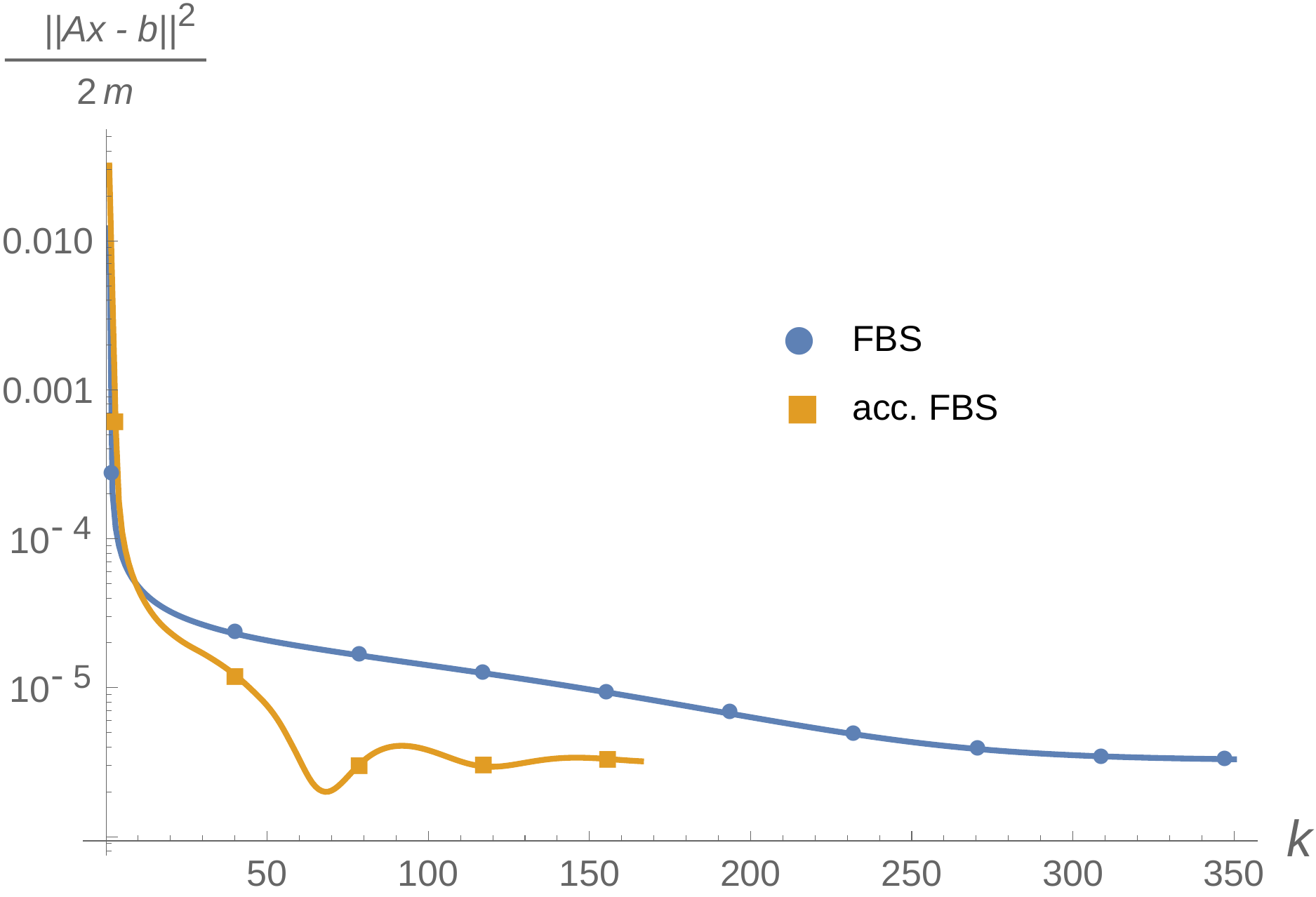}
\hfill
\includegraphics[width=0.32\textwidth]{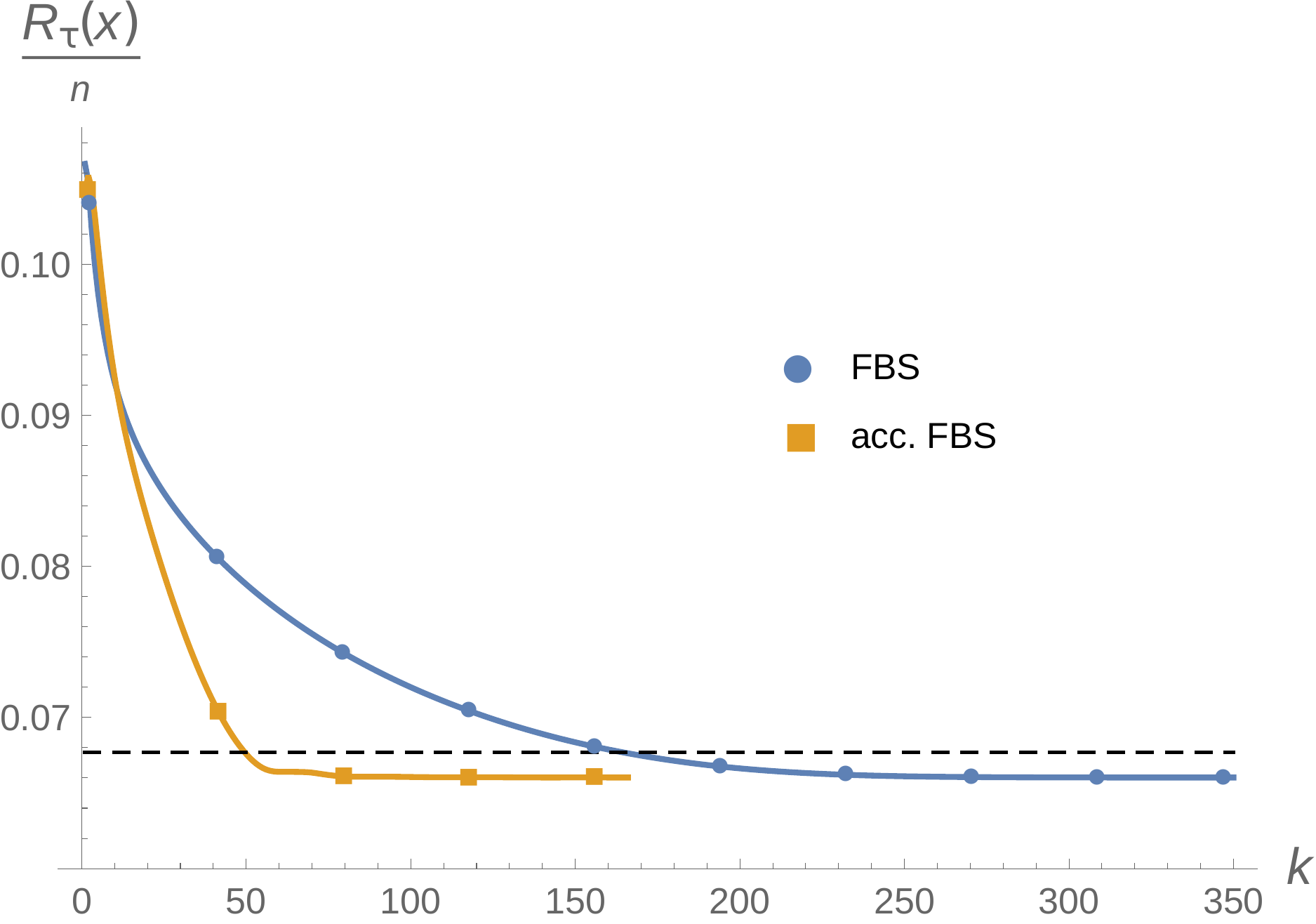}
\hfill
\includegraphics[width=0.32\textwidth]{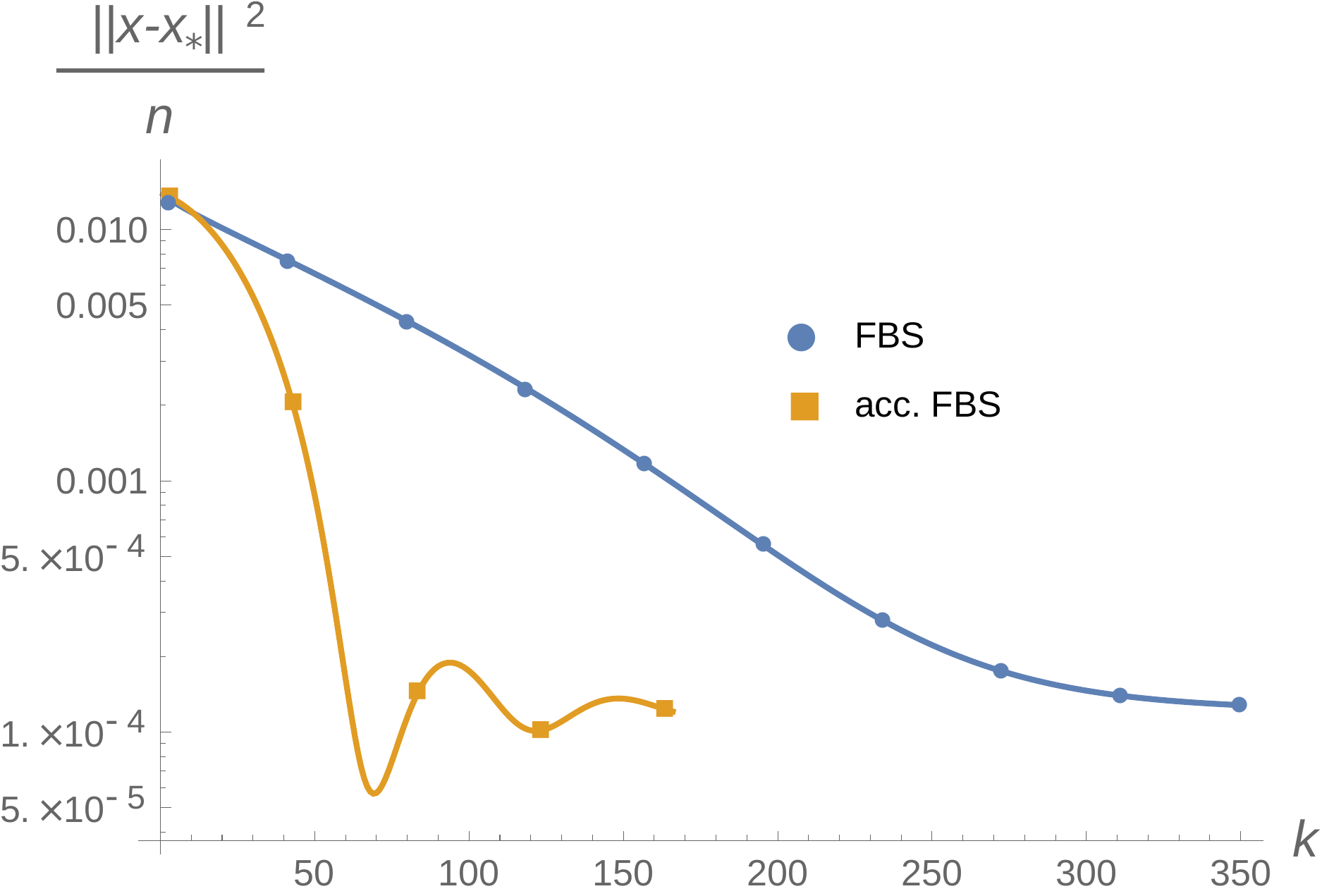}
}
\centerline{
\includegraphics[width=0.32\textwidth]{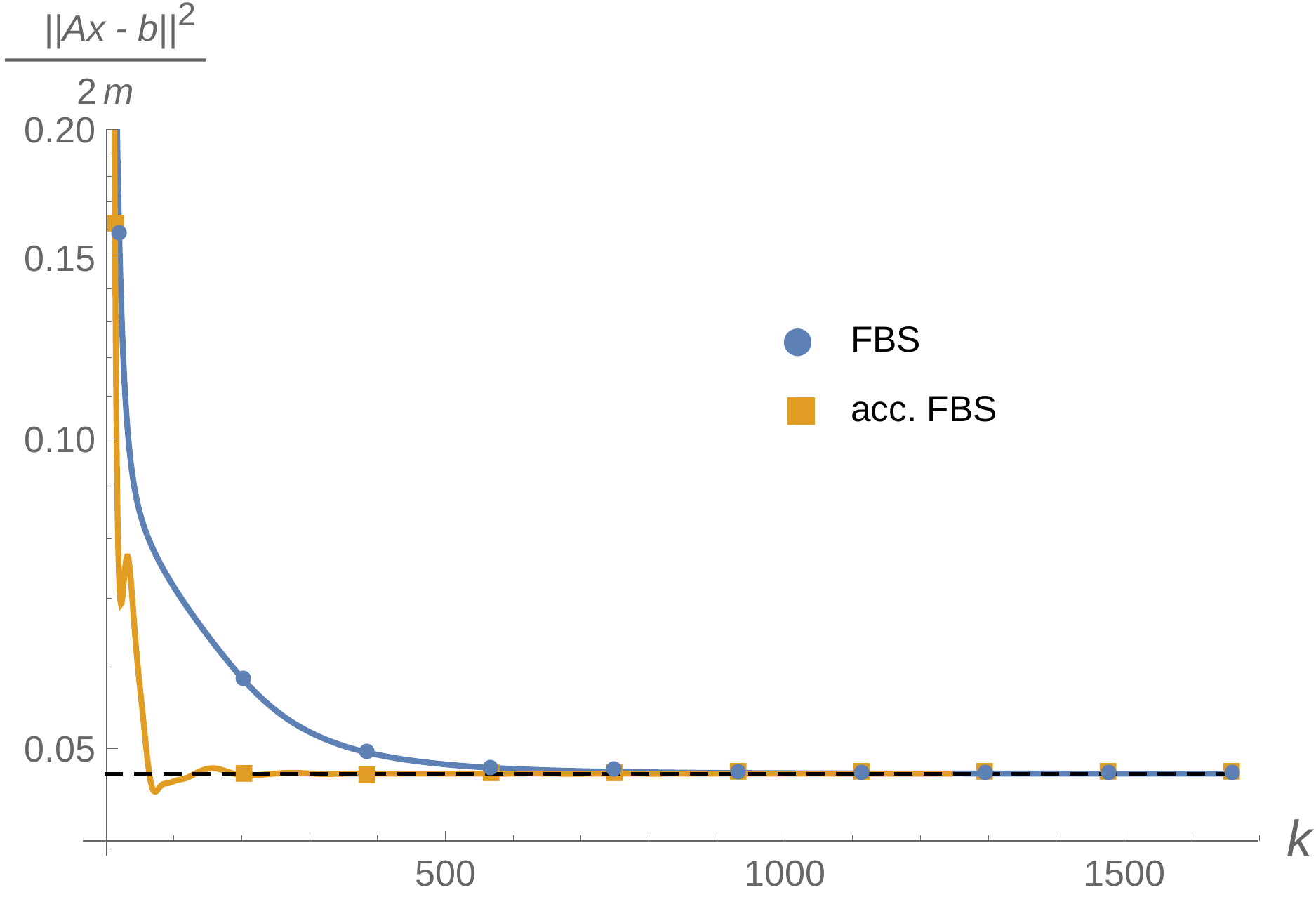}
\hfill
\includegraphics[width=0.32\textwidth]{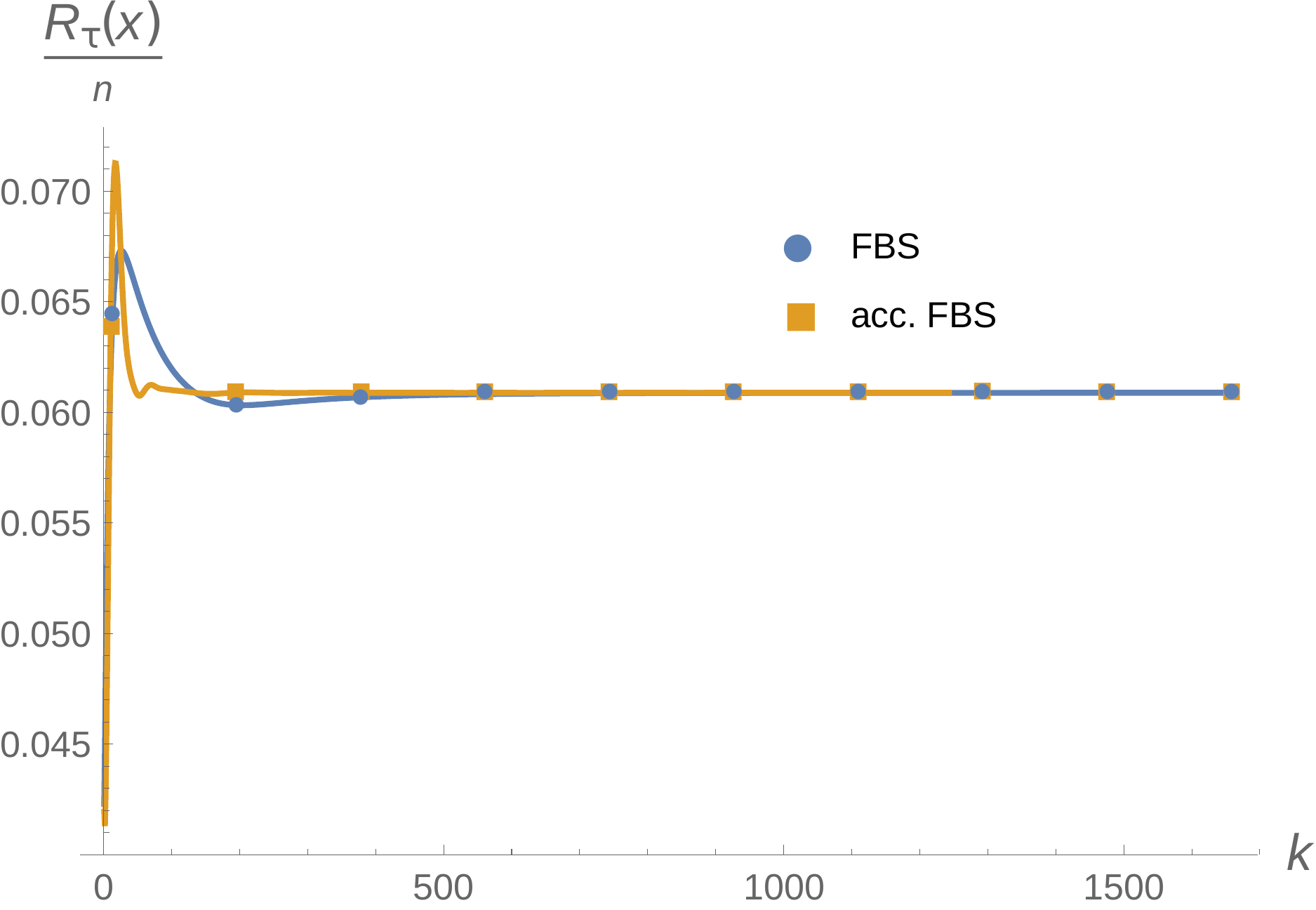}
\hfill
\includegraphics[width=0.32\textwidth]{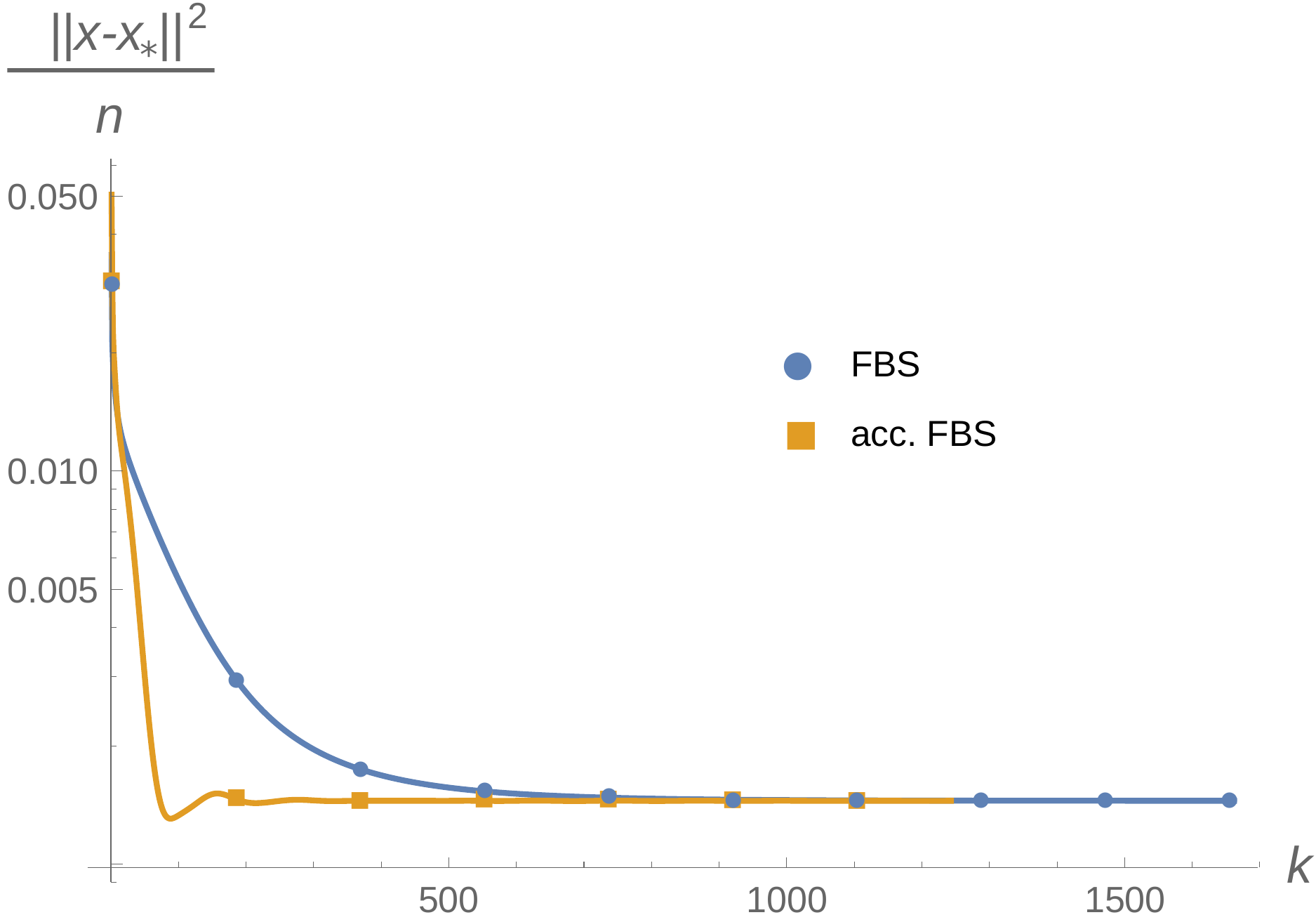}
}
\caption{\emph{Optimization:  FBS and accelerated FBS algorithms -- No Constraints.}
Scaled values of the data term (left column) and the regularizer (center column) of the objective \eqref{eq:hu-hc} and the squared error norm (right column), as a function of  the iteration index $k$ of the  FBS algorithm \eqref{eq:intro-FBS} (blue curves) and the accelerated FBS iteration in Algorithm \ref{alg:FBS}. The horizontal dashed lines indicate the corresponding values for $x_{\ast}$. The top and bottom rows correspond to exact and noisy data, respectively: The accelerated FBS algorithm needs about $50\%$ and $25\%$ fewer outer iterations, respectively. 
}
\label{fig:values-optimization}
\end{figure}

\begin{figure}
\centerline{
\includegraphics[width=0.32\textwidth]{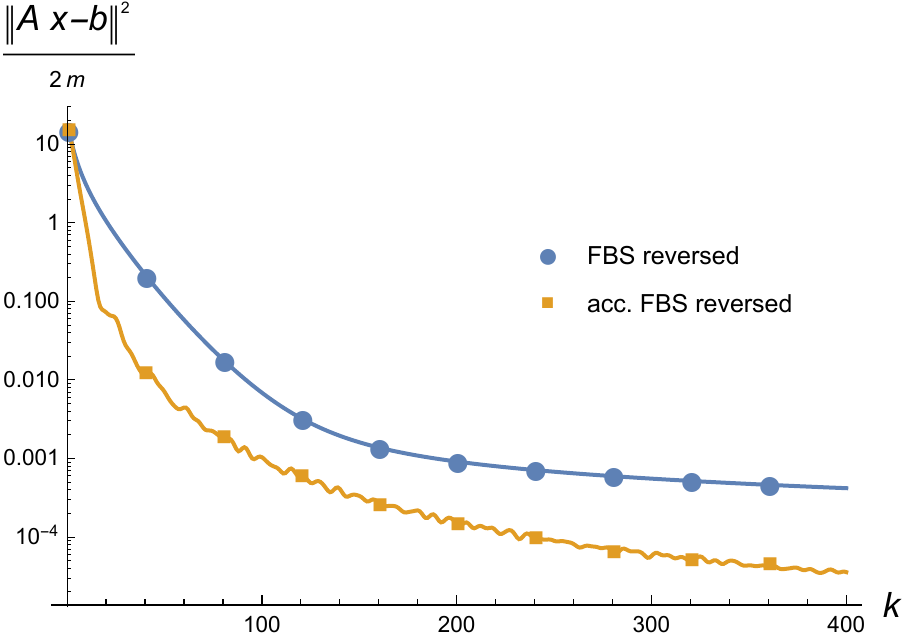}
\hfill
\includegraphics[width=0.32\textwidth]{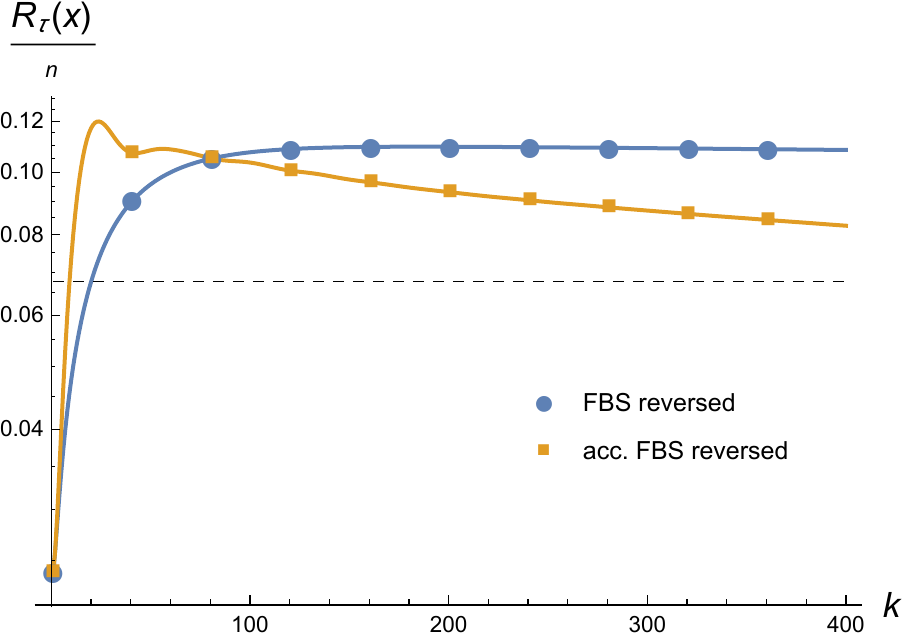}
\hfill
\includegraphics[width=0.32\textwidth]{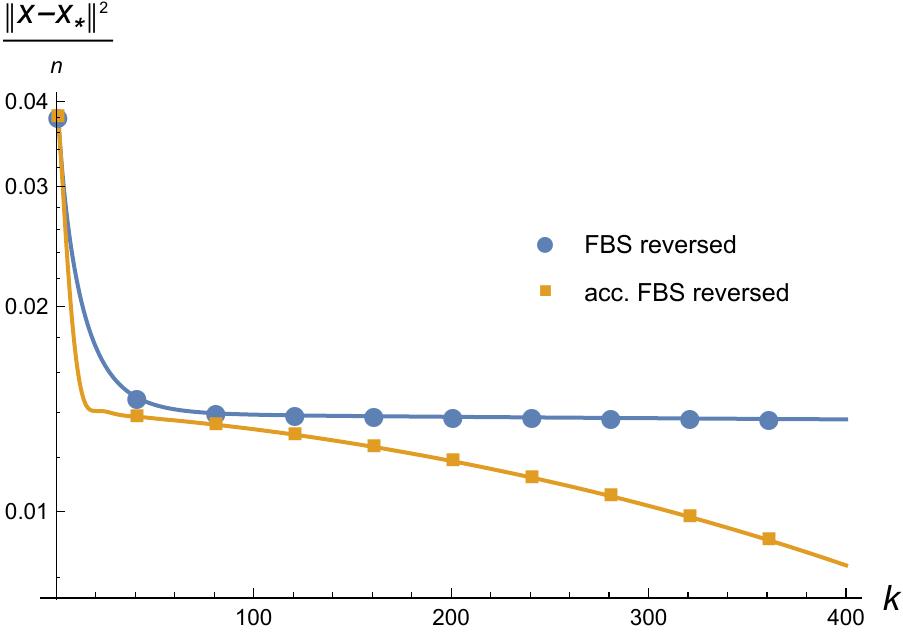}
}
\centerline{
\includegraphics[width=0.32\textwidth]{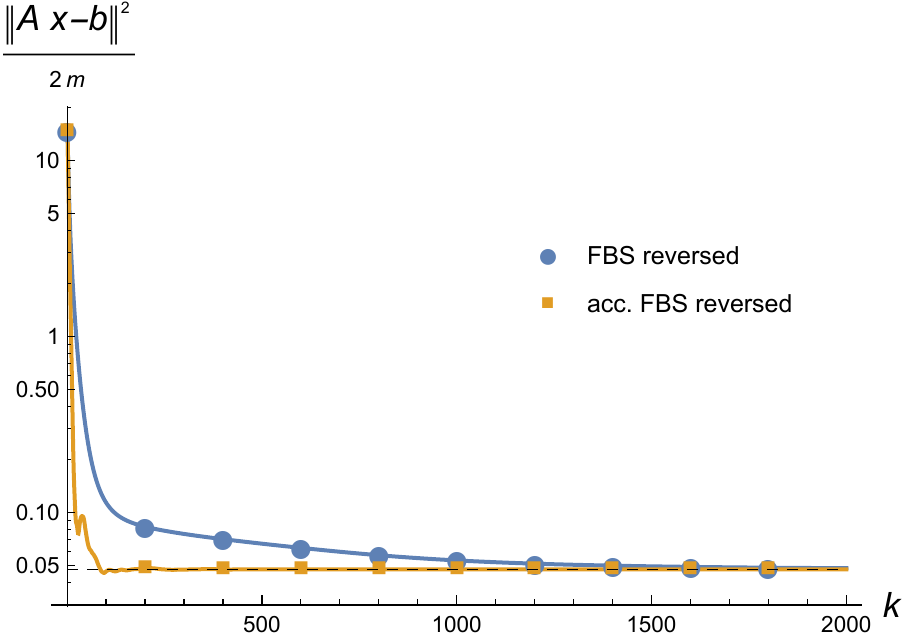}
\hfill
\includegraphics[width=0.32\textwidth]{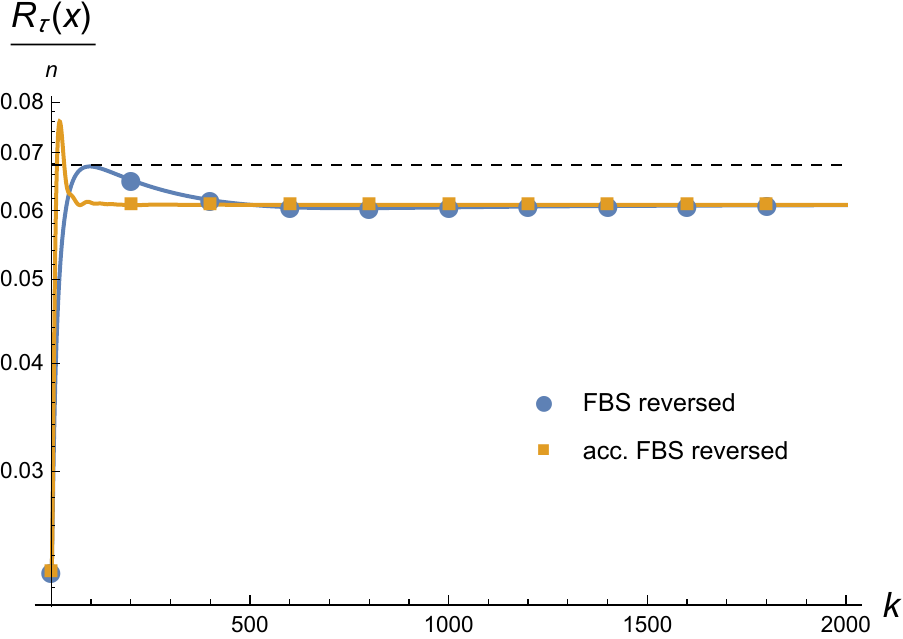}
\hfill
\includegraphics[width=0.32\textwidth]{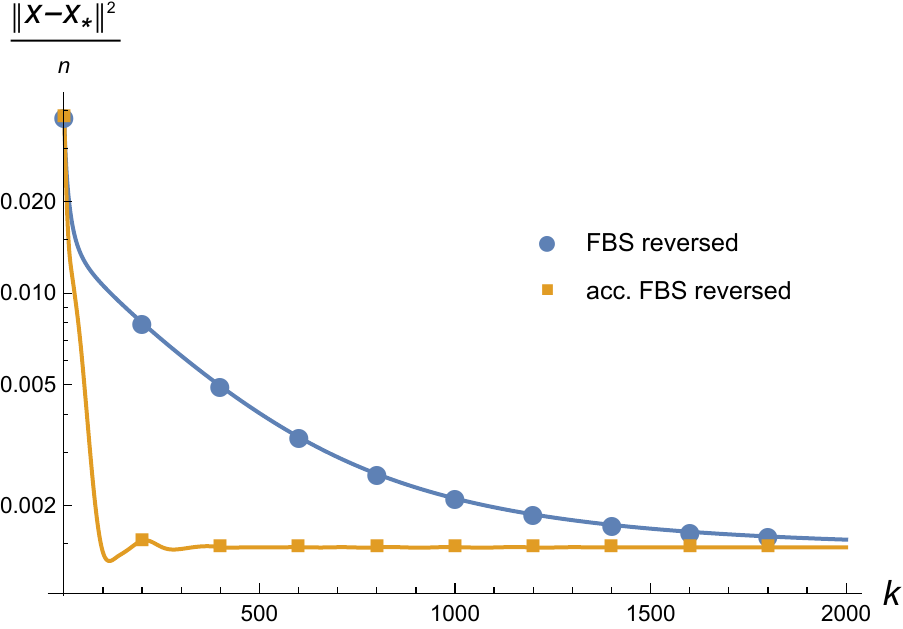}
}
\caption{\emph{Optimization:  FBS and accelerated FBS algorithm for the reversed splitting -- No Constraints.} 
Scaled values of the data term (left column) and the regularizer (center column) of the \emph{unconstrained} objective $h_u$ \eqref{eq:hu-hc} and the squared error norm (right column), as a function of the iteration indes $k$ of the FBS algorithm \eqref{eq:intro-FBS} (blue curves) and the accelerated FBS iteration in Algorithm \ref{alg:FBS} for the reversed splitting \eqref{eq:splitting-c-reversed}.  The horizontal dashed lines indicate the corresponding values for $x_{\ast}$. The top and bottom rows correspond to exact and noisy data, respectively.
}
\label{fig:values-optimization-SPG-NC}
\end{figure}

\begin{figure}
\centerline{
\includegraphics[width=0.32\textwidth]{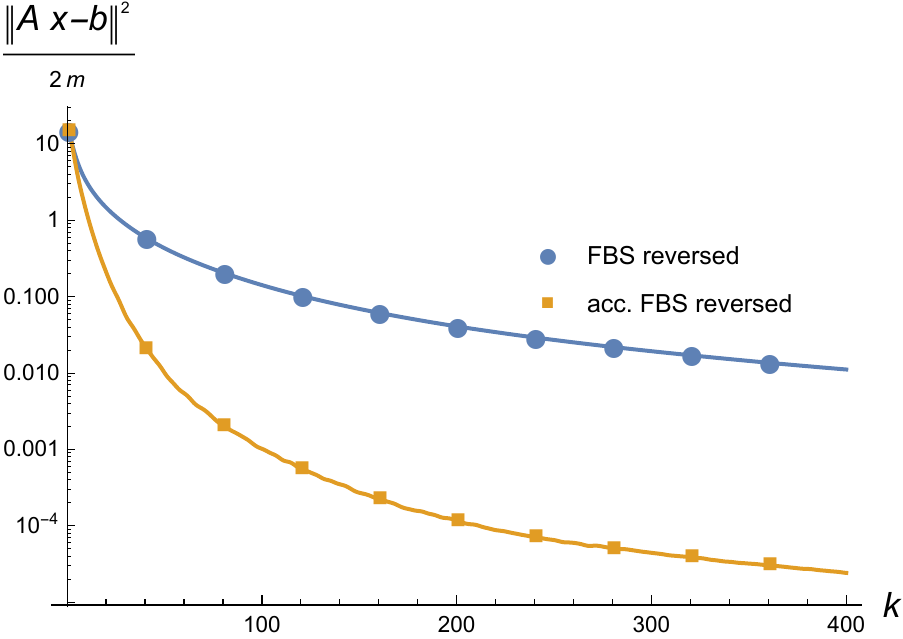}
\hfill
\includegraphics[width=0.32\textwidth]{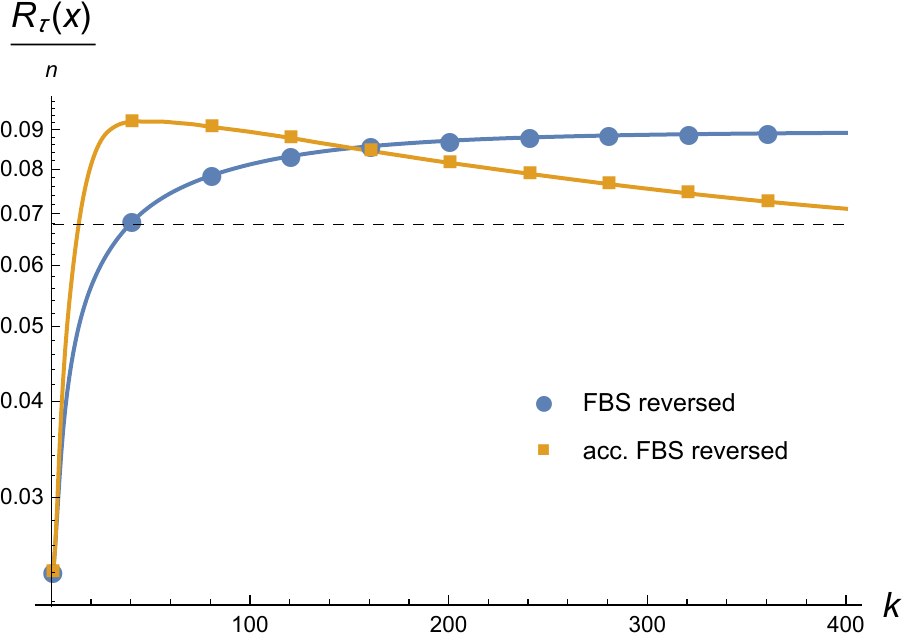}
\hfill
\includegraphics[width=0.32\textwidth]{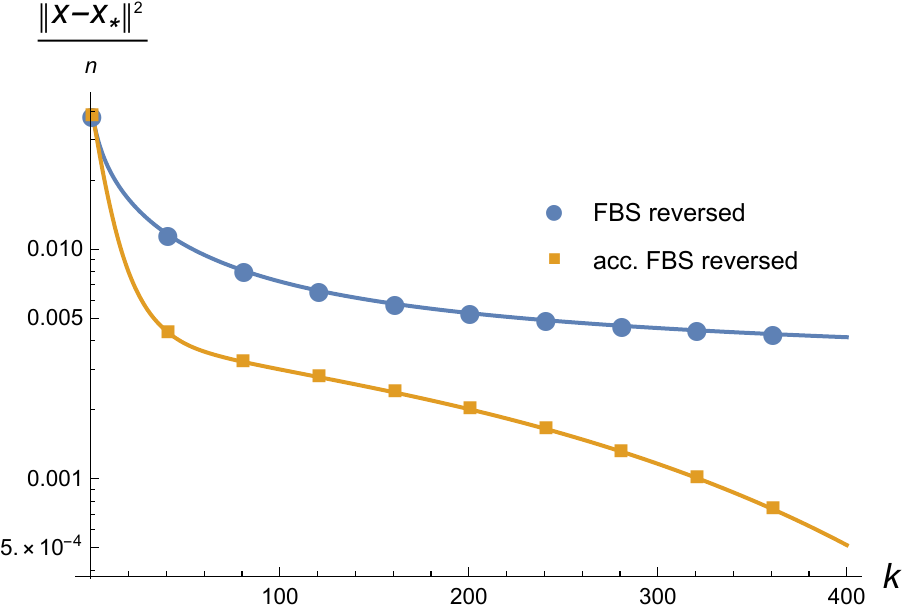}
}
\centerline{
\includegraphics[width=0.32\textwidth]{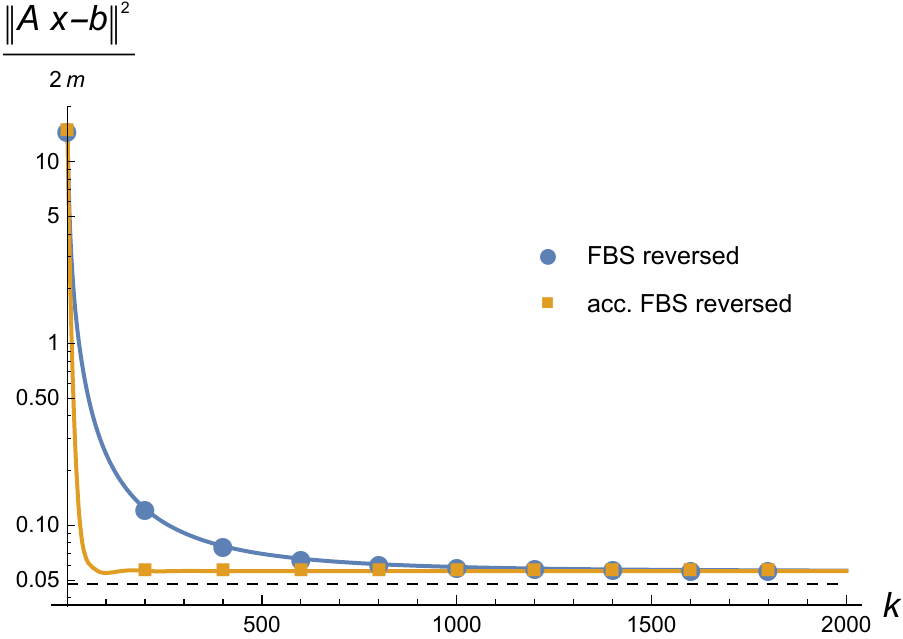}
\hfill
\includegraphics[width=0.32\textwidth]{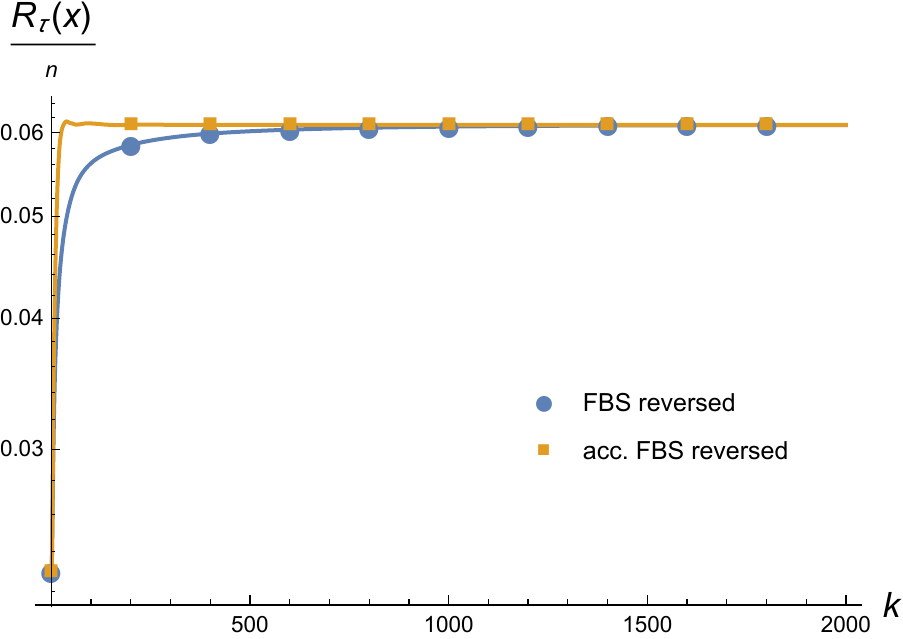}
\hfill
\includegraphics[width=0.32\textwidth]{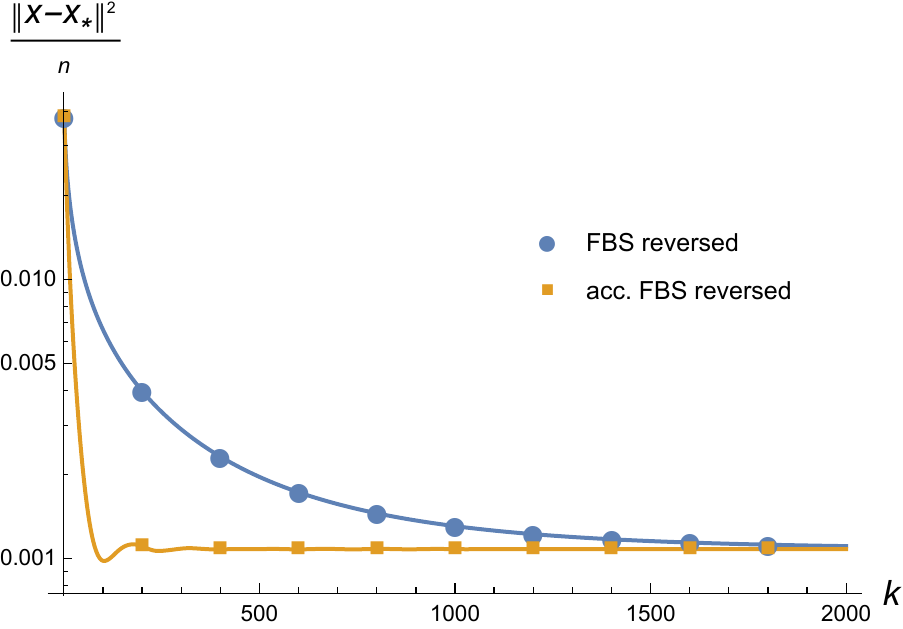}
}
\caption{\emph{Optimization:  FBS and accelerated FBS iteration for the reversed splitting -- Nonnegativity Constraints.} 
Scaled values of the data term (left column) and the regularizer (center column) of the \emph{nonnegativity constrained} objective $h_c$ \eqref{eq:hu-hc} and the squared error norm (right column), as a function of  the iteration index $k$ of the FBS algorthm (blue curves) and the  accelerated FBS algorithm for the reversed splitting \eqref{eq:splitting-c-reversed}. The horizontal dashed lines indicate the corresponding values for $x_{\ast}$. The top and bottom rows correspond to exact and noisy data, respectively. While the  FBS algorithm is damped by the inclusion of constraints, the nonnegativity constraints appear to be beneficial for the accelerated FBS algorithm on the other hand when comparing to the results in Figure~\ref{fig:values-optimization-SPG-NC}.
}
\label{fig:values-optimization-SPG-WC}
\end{figure}

\subsection{Superiorization vs.~Optimization: Computational Complexity}\label{sec:Complexity}
We look at the computational costs of the superiorization and optimization approaches.
\textbf{Superiorization.} To evaluate the cost of the superiorized versions of the basic algorithms, listed in Table \ref{tab:list-of-algorithms}, we call an outer iteration an ``iteration of the
basic algorithm" $\mc{A}(\cdot)$ and an inner iteration ``an execution of the target function reduction procedure'' $S(\cdot)$ by Algorithm \ref{alg:sup-classic} or by a proximal map via \eqref{alg:sup-prox} or \eqref{alg:sup-prox-plus}.
Within an outer iteration we have for the CG iteration in Algorithm \ref{alg:CG-pert-res} four costly matrix-vector operations in line \ref{CG_grad_update} and line 
\ref {CG_matrix_eval_2}, while for the Landweber Algorithm \ref{alg:Landweber} only two costly matrix-vector operations appear in line \ref{LW_matrix_eval_1}.
The cost of an inner iteration depends on the number of gradient and function evaluations of $R_\tau$. In the case of computing nonascent directions by scaled nonnegative gradients,
see, e.g., line \ref{eq:CG-nonsascent-step}  in Algorithm \ref{eq:superiorized-grad-CG}, the number of gradient and function evaluation is controlled by the parameter $\kappa$ and is upper bounded by $(\ell_k-\ell_{k-1})\kappa$.
Similarly, the cost of a proximal mapping evaluation via \eqref{alg:sup-prox} or \eqref{alg:sup-prox-plus} depends on the tolerance parameter of the
employed optimization algorithm. In our case, the box-constrained L-BFGS method from \cite{LBFGS-B} is very efficient in reaching a tolerance level of $10^{-6}$ within
few iterations due to the small values of the parameters $\beta_k$ used. In particular, we need 3--18 inner iterations while using at most 136 
function evaluations of $R_\tau$. This is \emph{comparable} to the cost of Algorithm \ref{alg:sup-classic}  for the considered choice of parameters.
We underline that the low cost of the proximal-point based Landweber algorithm is almost identical to the FBS algorithm \eqref{eq:intro-FBS}  and the accelerated FBS iteration in Algorithm
\ref{alg:FBS} for the reversed splitting \eqref{eq:splitting-c-reversed}.

\textbf{Optimization.}
We evaluated accelerated the FBS algorithm with \textit{inexact} evaluations of the proximal maps, for both the unconstrained and nonnegativity  constrained problem \eqref{eq:hu-hc}. In the former case, the inexactness criterion was evaluated during the primal-dual inner iteration using \eqref{eq:inexactness-unconstrained-final}. In the latter nonengatively constrained case, we noticed that the corresponding criterion \eqref{eq:inexactness-constrained-final} could not be used since $z \not\in K=\R^n_+$, with $z$ given by \eqref{eq:zp-z-Alg2-constrained-a}, happened frequently. We, therefore, resorted to the error estimate \eqref{eq:norm-e-constrained-estimate} and chose  $\veps_{k} = \mc{O}(k^{-q})$ so as to satisfy \eqref{eq:ek-summability}.

Figures \ref{fig:OuterInner-Unconstrained} and \ref{fig:OuterInner-Constrained} depict the corresponding results. Inspecting the panels on the left shows that both in the unconstrained and in the constrained case, choosing the proper error decay rate \eqref{eq:ek-decay-rate} significantly affects the accelerated FBS algorithm. Having chosen a proper exponent, an increasing number of inner iterations, ranging from few dozens to a couple of hundreds, are required for each outer iteration. Even though these inner iterations can be computed efficiently, their total number adds up to few hundreds of thousands for the entire optimization procedure.

\begin{figure}
\centerline{
\includegraphics[width=0.32\textwidth]{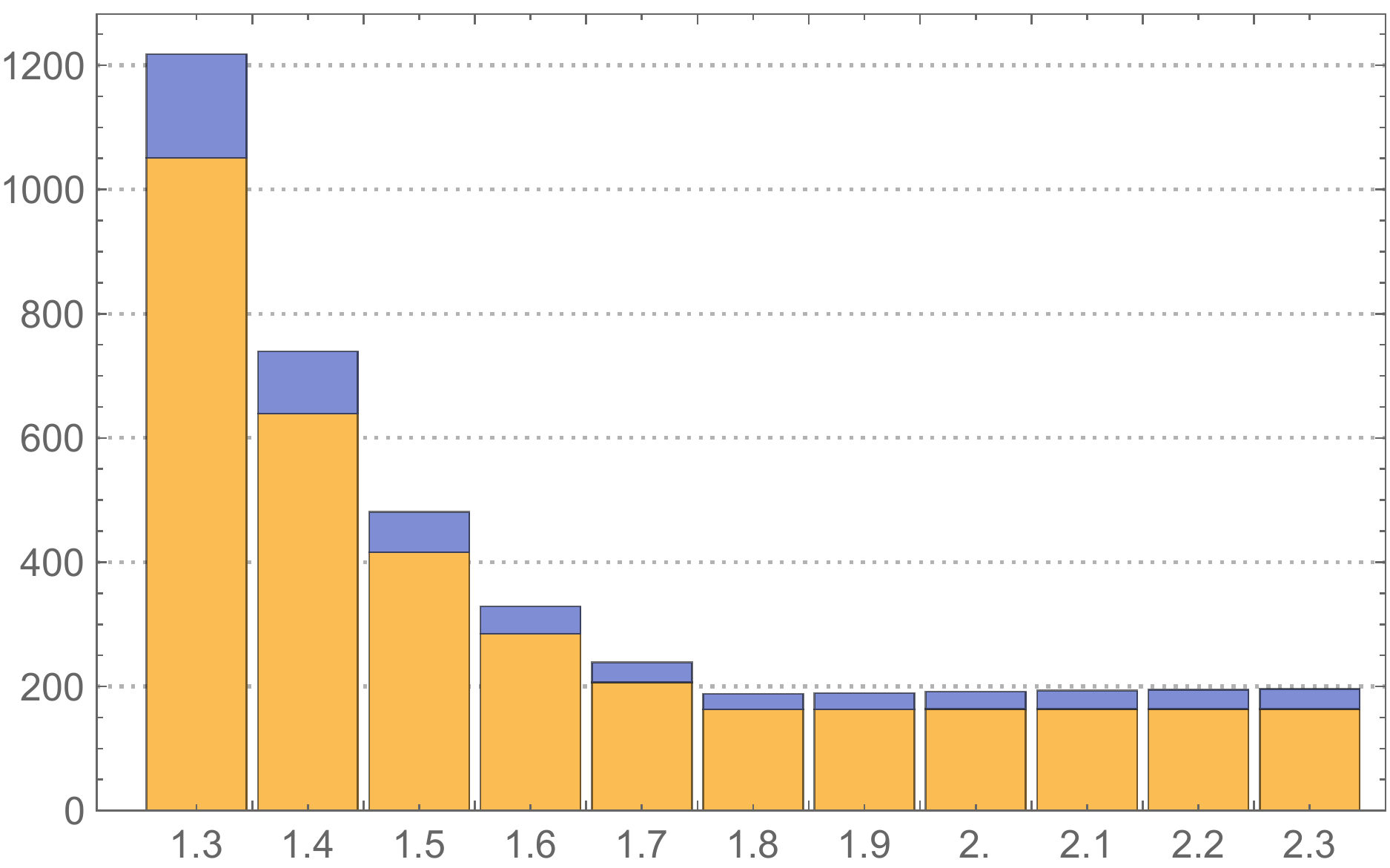}
\hspace{0.05\textwidth}
\includegraphics[width=0.32\textwidth]{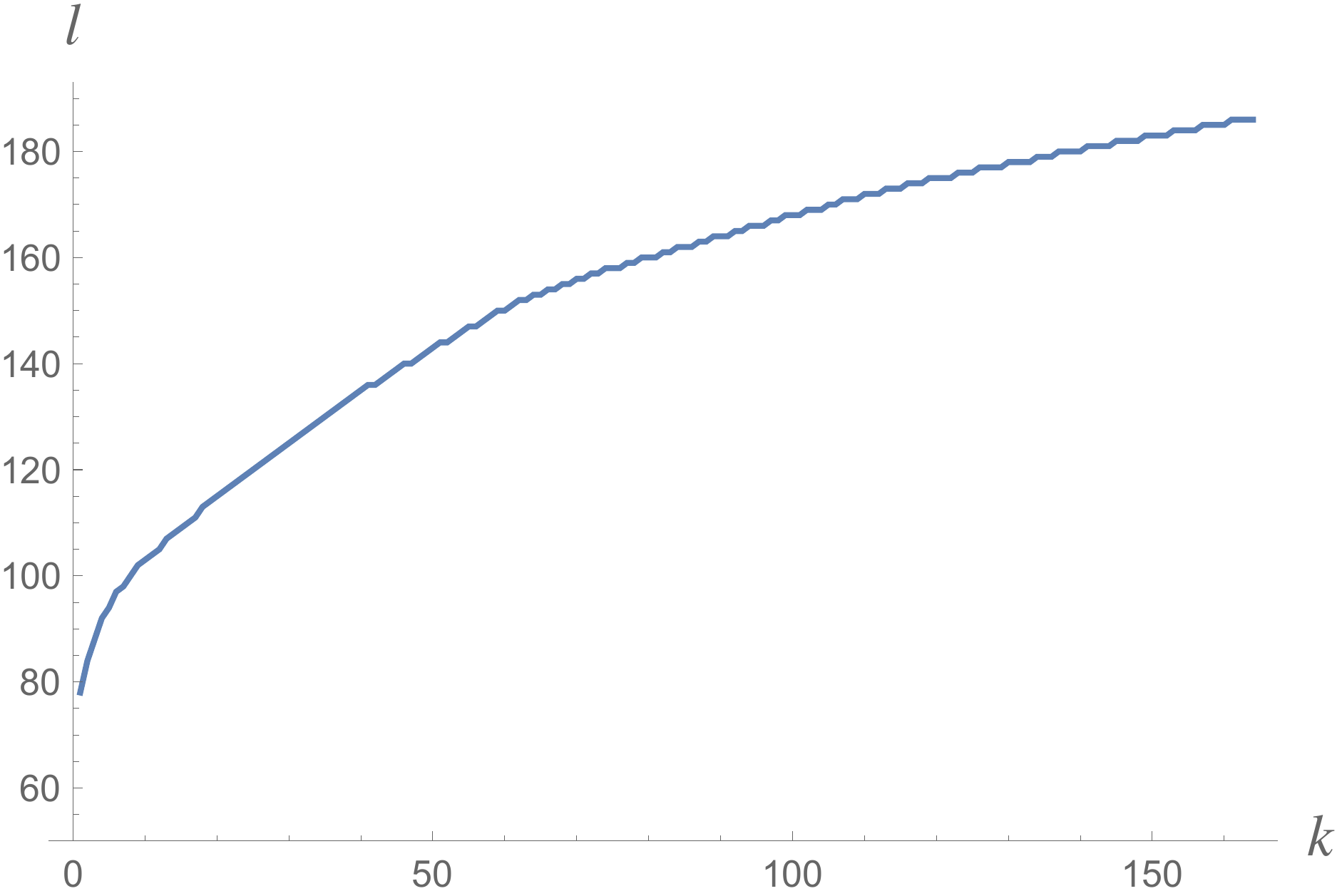}
}
\centerline{
\includegraphics[width=0.32\textwidth]{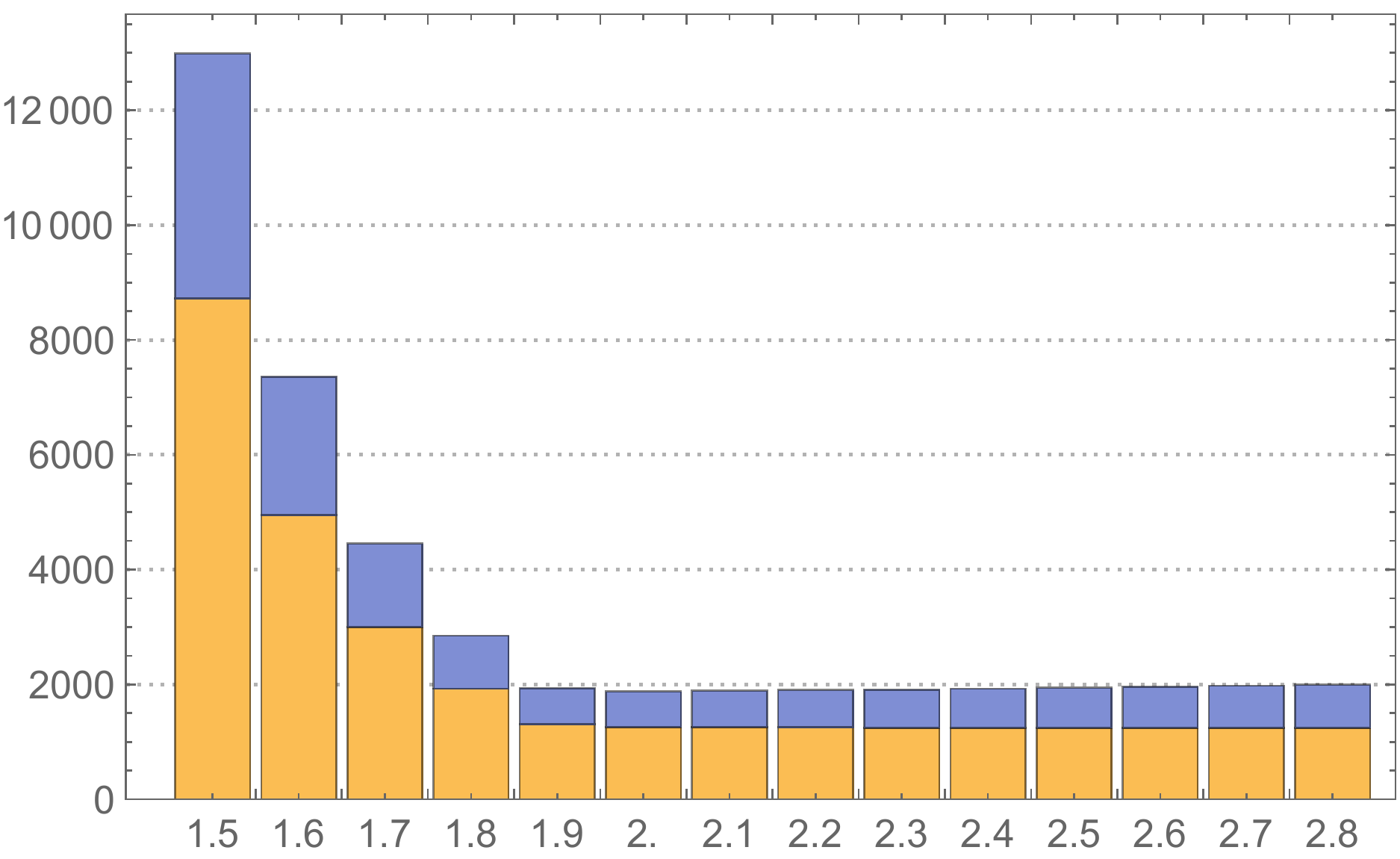}
\hspace{0.05\textwidth}
\includegraphics[width=0.32\textwidth]{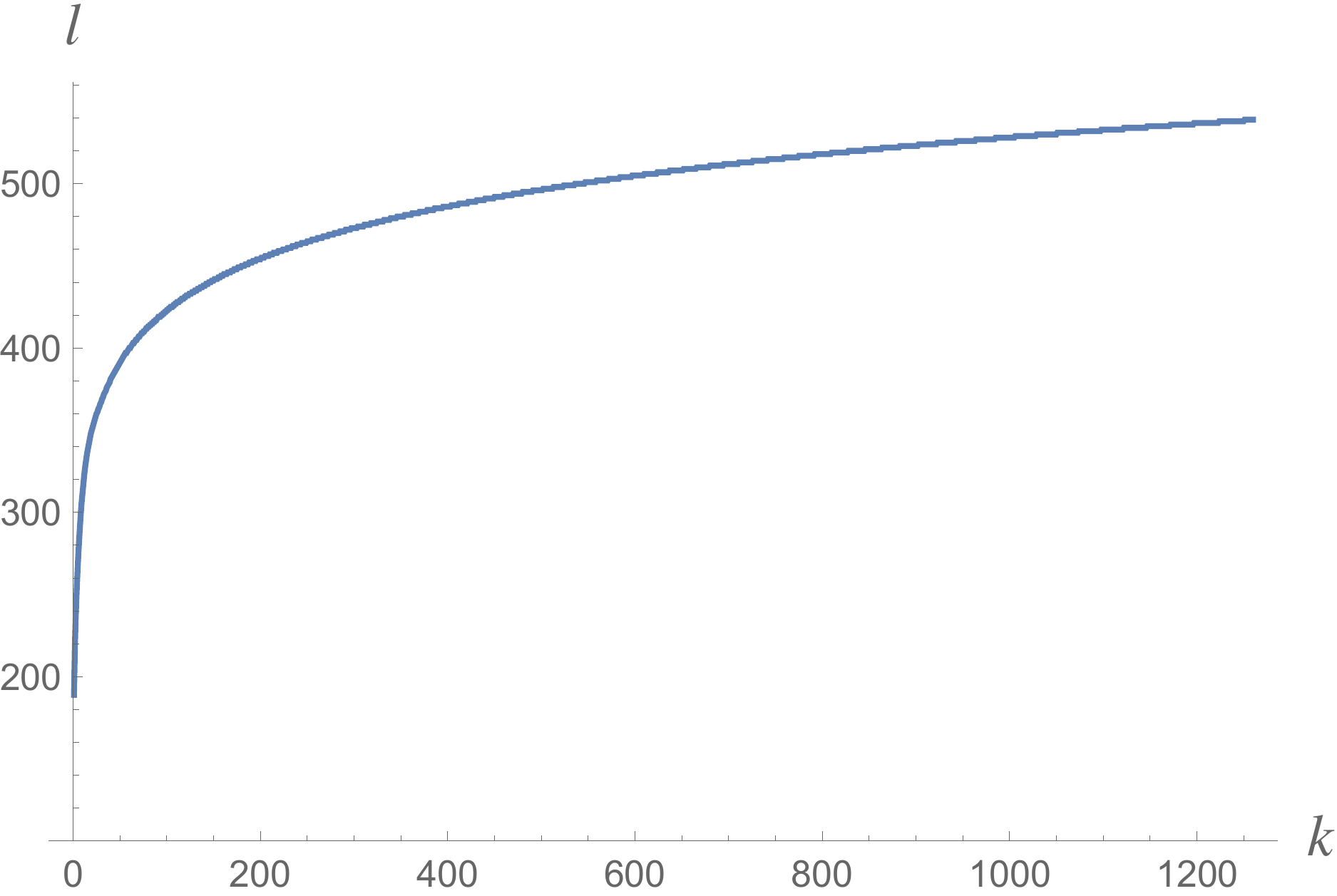}
}
\caption{
The inexact accelerated FBS algorithm for the \textit{unconstrained} problem \eqref{eq:hu-hc}. 
\textsc{Left:} Relation of the number of outer iterations (not scaled down) and the \textit{total} number of inner iterations (scaled down by $1/1000$) as a function of the exponent $q$ of the error $\veps_{k} = \mc{O}(k^{-q})$ \eqref{eq:ek-decay-rate}. \textsc{Right:} The number $l$ of inner iterations at each outer iteration $k$, for exponents $q=1.8$ (top) and $q=2.0$ (bottom) of \eqref{eq:ek-decay-rate}.
}
\label{fig:OuterInner-Unconstrained}
\end{figure}

\begin{figure}
\centerline{
\includegraphics[width=0.32\textwidth]{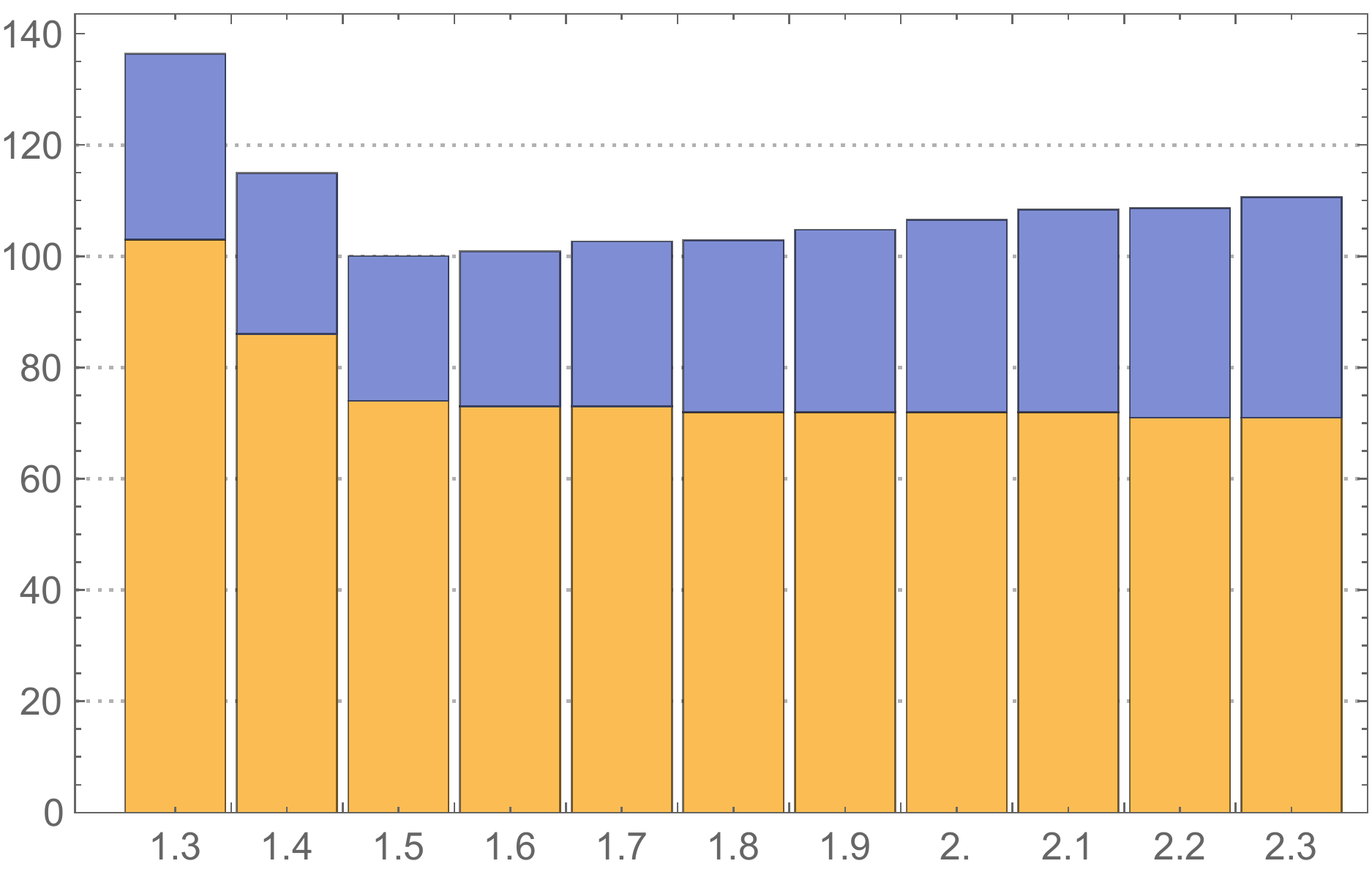}
\hspace{0.05\textwidth}
\includegraphics[width=0.32\textwidth]{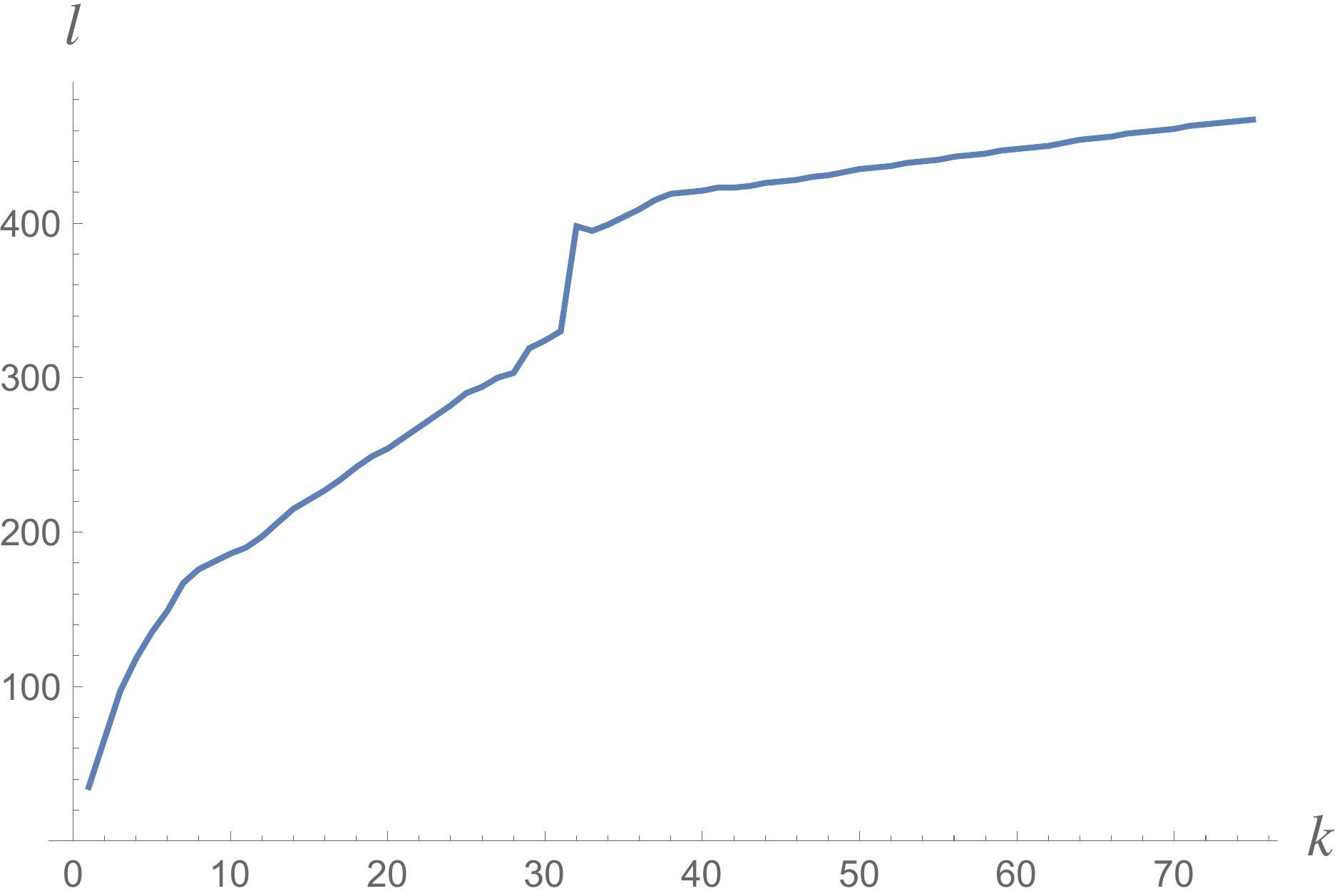}
}
\centerline{
\includegraphics[width=0.32\textwidth]{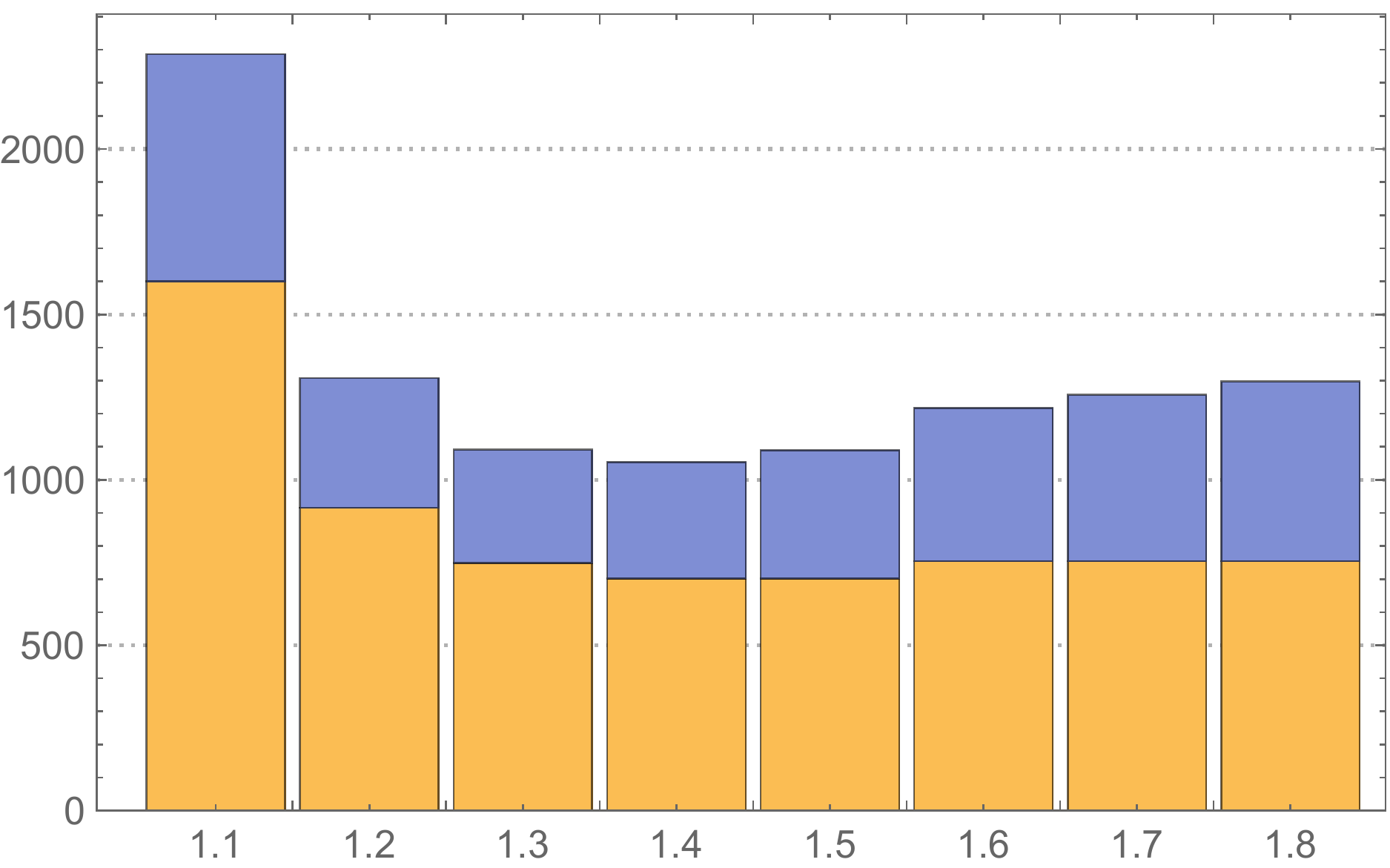}
\hspace{0.05\textwidth}
\includegraphics[width=0.32\textwidth]{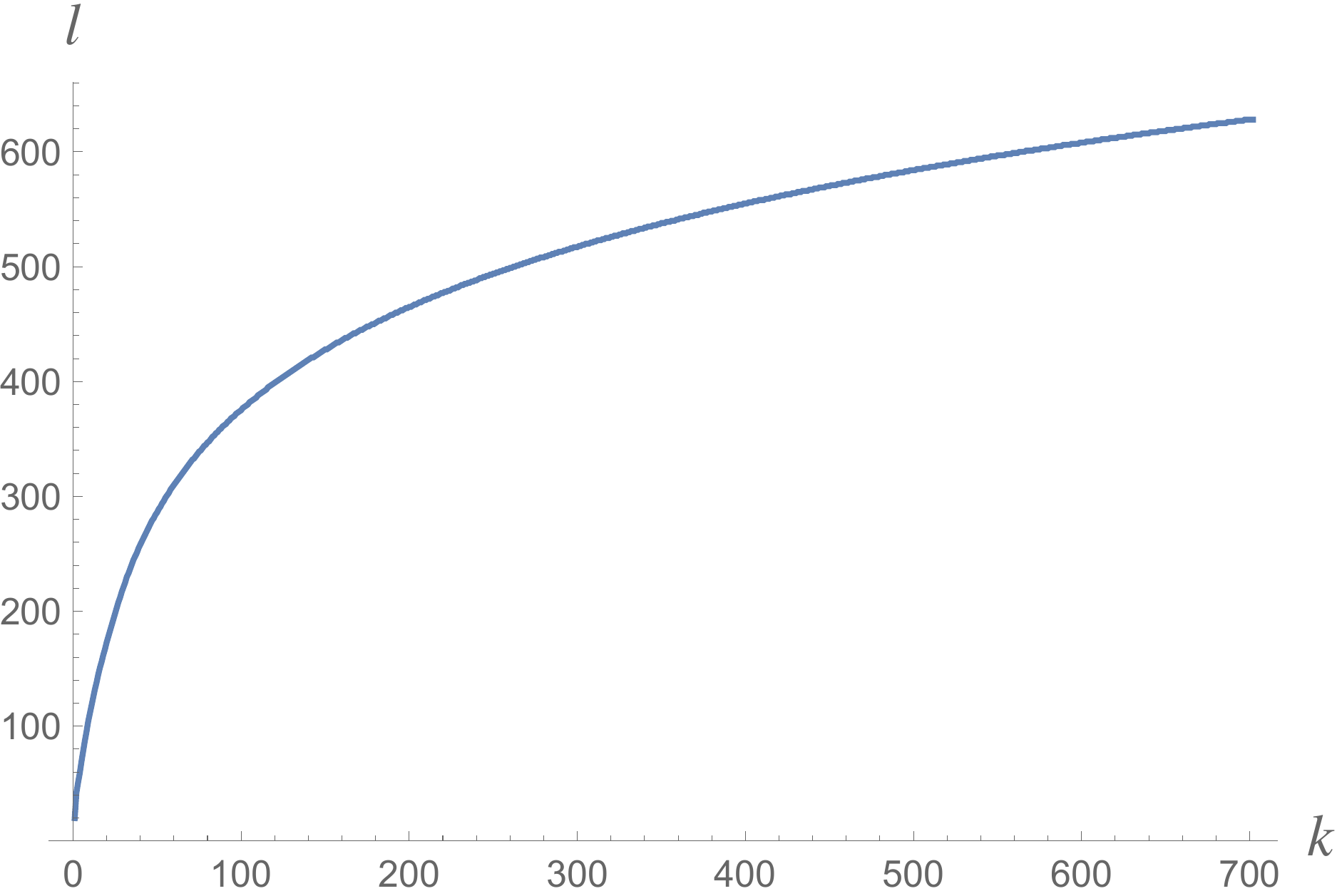}
}
\caption{The 
inexact accelerated FBS algorithm for the \textit{nonnegativity constrained} problem \eqref{eq:hu-hc}. 
\textsc{Left:} Relation of the number of outer iterations (not scaled down) and the \textit{total} number of inner iterations (scaled down by $1/1000$) as a function of  the exponent $q$ of the error $\veps_{k} = \mc{O}(k^{-q})$ \eqref{eq:ek-decay-rate}. \textsc{Right:} The number $l$ of inner iterations at each outer iteration $k$, for exponents $q=1.8$ (top) and $q=2.0$ (bottom) of \eqref{eq:ek-decay-rate}.
}
\label{fig:OuterInner-Constrained}
\end{figure}

\subsection{Discussion}\label{sec:Discussion}
We discuss our observations according to the following five aspects:
\begin{enumerate}[(1)]
\item Observations about superiorization;
\item Observations about optimization;
\item Superiorization vs.~convex optimization: empirical findings;
\item How superiorization could stimulate convex optimization;
\item How optimization could stimulate superiorization.
\end{enumerate}

\begin{enumerate}[(1)]
\item

\begin{enumerate}[(a)]
\item \textit{Superiorization, Figure~\ref{fig:values-superiorization-NC},
Figure~\ref{fig:values-superiorization-WC}: The choice of the basic
algorithm is essential.} The superiorized versions of the CG algorithm
outperforms in our experimental numerical work the superiorized versions
of the Landweber algorithm. The superiorized versions of the Landweber
algorithm makes slow progress towards $x_{\ast}$ due to the bad conditioning
of the matrix $A$ and the corresponding small step-size parameters.
\item \textit{Superiorization, Figure~\ref{fig:values-superiorization-NC},
Figure~\ref{fig:values-superiorization-WC}: Incorporating nonnegativity
constraints in the superiorization method slows down the iteration
process.} Compare the three reconstruction errors in Figure~\ref{fig:values-superiorization-NC}
to those values in Figure~\ref{fig:values-superiorization-WC}. Note
the disparity in the scale of the x-axis between Figures \ref{fig:values-superiorization-NC}
and \ref{fig:values-superiorization-WC}. Since the proximal-point
based superiorized version of the projected Landweber algorithm (\texttt{ProxSupProjLW})
behaves identically to the superiorized version of the Landweber algorithm,
which includes nonnegativity constraints via the proximal map (\texttt{ProxCSubLW}),
we conclude that this is not an artefact of the proposed method of
computing nonascent directions but rather a characteristic of projection-based
algorithms. We conjecture that a geometric basic algorithm that smoothly
evolves on a manifold defined by the constraints, in connection with
a proximal point that employs an appropriate Bregman distance, can
remedy this situation. 
\end{enumerate}
\item
\begin{enumerate}[(a)]
\item \textit{Optimization, Figure~\ref{fig:values-optimization-SPG-NC},
Figure~\ref{fig:values-optimization-SPG-WC}: Incorporating nonnegativity
constraints in the FBS algorithm for the reversed splitting \eqref{eq:splitting-c-reversed}
also slows down the algorithm. On the other hand, the accelerated
FBS iteration in Algorithm \ref{alg:FBS} does not suffer from this
shortcoming.}
\item \textit{Optimization, Figure~\ref{fig:values-optimization}: Acceleration
is very effective at little additional cost provided proximal maps
are computed exactly.} About 50 and 25 outer iterative steps are merely
needed to terminate in the noise-free and noisy case, respectively
(the latter termination criterion - just reaching the noise level
- is more loose). These small numbers of outer iterations result from
computing exact proximal maps.
\item 
\textit{Unconstrained optimization, Figure \ref{fig:OuterInner-Unconstrained}: Inexact inner loops considerably increase the number of iterations.} 
In comparison to the results shown by Figure \ref{fig:values-optimization}, where termination of the accelerated FBS algorithm using \textit{exact} proximal maps was reached after $\leq 75$ iterations, Figure \ref{fig:OuterInner-Unconstrained} shows that the number of outer iterations increases to about 150 and 1200 in the case of noise-free and noisy data, respectively, when \textit{inexact} evaluations of the proximal map are used. This contrast with the results shown by Figure \ref{fig:values-optimization} (exact proximal maps), where the difference between noise-free and noisy data is insignificant.

Each inexact evaluation of the proximal map requires on average 130 and 450 primal-dual iterations when using noise-free and noisy data, respectively, which are computationally inexpensive. The two panels on left-hand side of Figure \ref{fig:OuterInner-Unconstrained} reveal that the efficiency of \textit{acceleration} of the outer FBS iteration requires to choose the exponent $q$ of the error $\veps_{k} = \mc{O}(k^{-q})$ not smaller than a problem-dependent critical value (cf.~the caption of Figure \ref{fig:OuterInner-Unconstrained}).

\item \textit{nonnegativity constrained optimization, Figure~\ref{fig:OuterInner-Constrained}:
The number of outer iterations decreases relative to Figure \ref{fig:OuterInner-Unconstrained}
(so constraints help), but the number of inner iterations increases.}
Otherwise, the observations regarding Figure \ref{fig:OuterInner-Unconstrained}
hold.
\end{enumerate}
\item
\begin{enumerate}[(a)]
\item \textit{Superiorization vs. optimization: Convergence.} Each optimization
algorithm is guaranteed to reduce the combined recovery errors (1)
and (2), listed in the beginning of Subsection \ref{sec:Error-Values}.
The sub-sampling ratio $m/n$ is chosen according to \cite{Kuske2019}
so that recovery via \eqref{eq:hu-hc} is stable. As a consequence,
also the recovery error (3) in Subsection \ref{sec:Error-Values}
is reduced by the optimization algorithm. While the superiorized versions
of the basic algorithms are guaranteed to reduce only the, above mentioned,
recovery error in (1), we observe in Figure~\ref{fig:values-superiorization-NC}
and Figure~\ref{fig:values-superiorization-WC} that also recovery
errors (2) and (3) are decreased to levels comparable to those achieved
by optimization. This favorable performance of superiorization requires
some parameter tuning as described in Subsection \ref{sec:Error-Values}.
\item 
\textit{Superiorization vs. optimization: Best performance is achieved
by the superiorized version of the CG algorithm and by the accelerated
FBS algorithm for the reversed splitting.} The evaluation of the implemented
algorithms shows that, in the unconstrained case, both the proximal-point
based superiorized \textit{version of the CG algorithm} and the accelerated
FBS algorithm for the reversed splitting \eqref{eq:splitting-c-reversed}
reach almost optimal error term values within the fewest number of
iterations at low computational cost (4 vs. 2 matrix-vector evaluations
and 1 proximal-point evaluation of $R_{\tau}$). In the constrained
case, the accelerated FBS iteration, Algorithm \ref{alg:FBS}, applied
for the reversed splitting \eqref{eq:splitting-c-reversed} performs
best while keeping low the computational cost.
\item 
\textit{Superiorization vs. optimization: We addressed the balancing
question by the two alternative splittings.} By analyzing and evaluating
the FBS algorithm for the two alternative splittings, we also addressed
the so-called \emph{balancing question} in the SM: How to distribute
the efforts that a superiorized versions of a basic algorithm invests
in target function reduction steps versus the efforts invested in
the basic algorithm iterative steps? In the case of the splitting
\eqref{eq:splitting-c-SM} that results in the inexact (accelerated)
FBS algorithm, we performed \emph{many basic algorithm iteration steps},
e.g., via Algorithm \ref{alg:PD-No-Inversion}, and just \emph{one
target function reduction step} with respect to $R_{\tau}$. By contrast,
in the case of the reversed splitting \eqref{eq:splitting-c-reversed},
we performed \emph{one basic algorithm iteration} and \emph{many target
function reduction steps}, to compute the proximal point in \eqref{eq:intro-FBS}.
Our results show that the \emph{accelerated} reversed FBS
is computationally more efficient. They suggest that one basic iteration
should be followed by many target function reduction steps for our
considered scenario.
\end{enumerate}

\item \textit{There is a deeper relation between the balancing problem of
the SM and the splitting problem of optimization.} Due to the above
comment (3)(b), the number of steps of the basic algorithm relative
to the number of steps for target function reduction is crucial for
the performance of superiorization. For our considered problem, a
relatively larger number of target function reduction steps seems
to pay. This observation agrees with the better performance of the
\textit{reversed} splitting for optimization, since then inexact evaluations
of the proximal map entail a larger relative number of target function
reduction steps.
\item \textit{Superiorization vs. optimization: Future work.} Observation
(4) above motivates us to consider in future research an inexact Douglas-Rachford
(DR) splitting that includes two proximal maps: one corresponding
to the least-squares term of \eqref{eq:hu-hc} and one corresponding
to the regularizer $R_{\tau}$ that might include nonnegativity constraints
as well. Such an inexact DR splitting would not only be structurally
close to the superiorization of a basic algorithm for least-squares,
but also provide a foundation for a theoretical investigation of the
SM.
\end{enumerate}

\section{Conclusion}

\label{sec:Conclusion}

We considered the underdetermined nonnegative least-squares problem,
regularized by total variation, and compared, for its solution, superiorization
with a state-of-the-art convex optimization approach, namely, accelerated
forward-backward (FB) splitting with inexact evaluations of the corresponding
proximal mapping. The distinction of the basic algorithm and target
function reduction by the SM approach motivated us to contrast superiorization
with an FB splitting, such that the more expensive backward part corresponds
structurally to the basic algorithm, whereas the forward part takes
into account the target function. However, in view of the balancing
problem of the SM, we also considered the \textit{reverse} FB splitting,
since exchanging the forward and backward parts in connection with
inexact evaluations of the proximal mapping is structurally closer
to superiorization.

Our experiments showed that superiorization outperforms convex optimization
\textit{without} acceleration, after proper parameter tuning. Convex
optimization \textit{with} acceleration, however, is on a par with
superiorization or even more efficient when using the \textit{reverse}
splitting. This raises the natural question: How can \textit{accelerated}
superiorization be defined for basic algorithms and target function
reduction procedures?

Superiorization performs best when the number of iterative steps used
for the basic algorithm and for target function reduction, respectively,
are balanced properly. Our results indicate a deeper relationship
of the balancing problem of superiorization and the splitting problem
of convex optimization. We suggested a splitting approach (point (5)
in Subsection 5.4 above) that deems most promising to us for further
closing the theoretical gap between superiorization and optimization
of composite convex problems.

Finally, we point out that we restricted our study to an overall \textit{convex}
problem, which enabled us to apply an advanced optimization scheme
that combines acceleration with inexact evaluations. While superiorization
has proven to be useful for \textit{nonconvex} problems as well, more
research is required to relate both methodologies to each other in
the nonconvex case.

\section*{Acknowledgements} 
We are grateful to the three anonymous reviewers for their constructive comments that helped us improve the paper.
The work of Y. C. is supported by the ISF-NSFC joint research program grant
No. $2874/19$. 

\bibliographystyle{amsalpha}

\newcommand{\etalchar}[1]{$^{#1}$}
\providecommand{\bysame}{\leavevmode\hbox to3em{\hrulefill}\thinspace}
\providecommand{\MR}{\relax\ifhmode\unskip\space\fi MR }
\providecommand{\MRhref}[2]{%
  \href{http://www.ams.org/mathscinet-getitem?mr=#1}{#2}
}
\providecommand{\href}[2]{#2}

\end{document}